\numberwithin{equation}{section}
\newcommand{\kp}{{\frac{k}{n}+pT^\e_k}}
\newcommand{\kpp}{{\frac{k}{n}+(p+1)T^\e_k}}
\newcommand{\given}{\mid\mathcal F_{\kk}}
\newcommand{\rcoe}{\nabla H(X_s^\e)^*\sigma(X_s^\e)}
\newcommand{\e}{\varepsilon}
\newcommand{\h}{\hat}
\newcommand{\kk}{\frac{k}{n}}
\newcommand{\kl}{\frac{k+1}{n}}
\newcommand{\pl}{\varphi^{(l)}}
\newtheorem{theorem}{Theorem}[section]
\newtheorem{lemma}[theorem]{Lemma}
\newtheorem{definition}[theorem]{Definition}
\title{Large Deviations for Hamiltonian Systems on Intermediate Time Scales}
\author{Shuo Yan\\
Department of Mathematics, University of Maryland\\ 4176 Campus Drive - William E. Kirwan Hall\\
College Park, Maryland 20742-4015,
United States\\
shuoyan@umd.edu}
\date{}
\begin{document}

\maketitle

\begin{abstract}
    We consider a two-dimensional Hamiltonian system perturbed by a small diffusion term, whose coefficient is state-dependent and non-degenerate. 
    As a result, the process consists of the fast motion along the level curves and slow motion across them. 
    On finite time intervals, the large deviation principle applies, while on time scales that are inversely proportional to the size of the perturbation, the averaging principle holds, i.e., the projection of the process onto the Reeb graph converges to a Markov process.
    In our paper, we consider the intermediate time scales and prove the large deviation principle, with the action functional determined in terms of the averaged process on the graph.
    
\textbf{Keywords:} Large Deviations, Action Functional, Averaging Principle, Hamiltonian Systems.

\textbf{Mathematics Subject Classification:} 37J40, 60F10 
\end{abstract}

\section{Introduction}
Consider the Hamiltonian dynamical system in $\mathbb R^2$ defined by an ordinary differential equation:
\begin{equation}
\label{eq:1}
    dx_t=v(x_t)dt,\ x_0\in\mathbb R^2,
\end{equation}
where
$$v(x)=\nabla^\perp H(x):=\left(-\frac{\partial H(p,q)}{\partial q},\frac{\partial H(p,q)}{\partial p}\right),$$ and Hamiltonian $H$ is smooth enough. 
\begin{figure}[!ht] 
    \centering
    \includegraphics[width=0.95\textwidth]{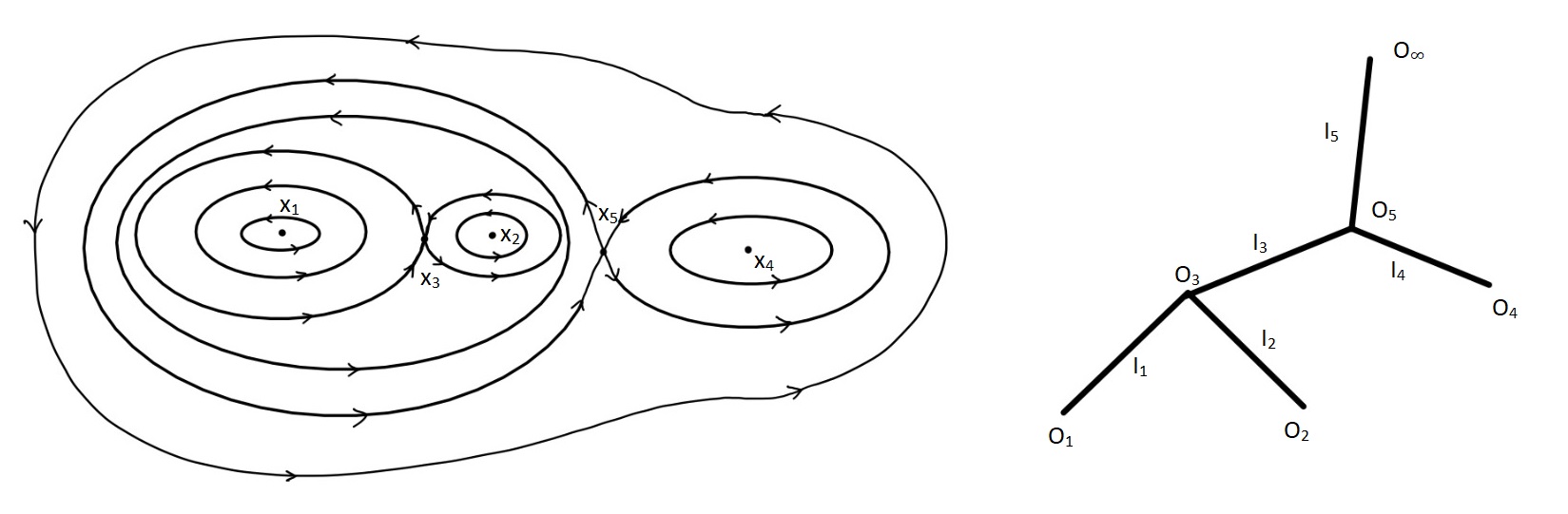}
    \caption{A typical example of a Hamiltonian system and the corresponding Reeb graph.}
    \label{hamil}
\end{figure}

Small random perturbations of Hamiltonian systems have been studied from different perspectives. 
In particular, let $\e$ be a small positive parameter, and $\Tilde X^\e$ be a diffusion process in $\mathbb R^2$ defined by the equation:
\begin{equation}
    \label{eq:2}
    d\Tilde X_t^\e=\nabla^{\perp}H(\Tilde X_t^\e)dt+\sqrt\e\sigma(\Tilde X_t^\e)d\Tilde{W_t},\ \ \ \ \ \ \Tilde{X_0^\e}=x_0\in\mathbb R^2,
\end{equation}where $H$ is a smooth function from $\mathbb R^2$ to $\mathbb R$ with bounded second derivatives; $\sigma(x)$ is a smooth $2\times l$ matrix-valued function such that $\sigma(x)\sigma^*(x)$ is uniformly positive-definite and bounded;
$\Tilde{W_t}$ is a standard $l-$dimensional Wiener process. 
On time scales of order one, the large  deviation principle, known as Freidlin-Wentzell theory, established in \cite{Ventsel1970}, describes the probability (exponentially small in $1/\e$) of the event that a realization of 
$\Tilde X_t^\e$ belongs to a neighborhood of a path that is different from the trajectory of the deterministic flow \eqref{eq:1}. 
On time scales of order $1/\e$, the averaging principle tells us that the projection of the process $\Tilde X_t^\e$ onto the Reeb graph behaves as a strong Markov process. In particular, on each edge of the graph, $H(\Tilde X_t^\e)$ behaves as a diffusion process (\cite{fred}). 
On intermediate time scales, i.e., when time is of order $|\log\e|\e^{\beta-1}$, a result of the averaging type can also be established (\cite{Hairer2016AFK}). 
Namely the time-changed process converges to a motion on a rescaled graph, provided it starts on or near a level curve containing a saddle point of $H$. 
In this paper, we also study the intermediate time scales, $t\sim\e^{\beta-1}$, however, without spatial rescaling. 
Our main result states that, after projection onto the graph, the process satisfies the large deviation principle on time scales of order $\e^{\beta-1}$. 
The action functional can be described in terms of the coefficients of the
averaged process on the graph. 
In the particular case $\beta=1/2$, estimates on transition probability for the process inside the edges and estimates for the exit time from a neighborhood of an interior vertex were given in \cite{MR1883738}.
(The main focus of the latter paper was on the propagation of the reaction front in the KPP equation with the underlying diffusion given by \eqref{eq:2}.)
In our paper, we allow arbitrary $\beta\in(0,1)$, which requires a more complicated approach.

Since we are interested in the behavior of $\Tilde X_t^\e$ with $t\sim \e^{\beta-1}$, it is convenient to rescale the time by defining $X_t^\e=\Tilde X_{t\e^{\beta-1}}^\e.$ Then we have the equation for $X_t^\e$:
\begin{equation}
\label{eq:3}
    dX_t^\e=\e^{\beta-1}\nabla^{\perp}H(X_t^\e)dt+\e^{\beta/2}\sigma(X_t^\e)dW_t,\ \ \ \ \ \ X_0^\e=x_0,
\end{equation}
where $W_t$ is an $l-$dimensional Wiener process. By Ito's formula applied to $H(X_t^\e)$,
\begin{equation}
\label{eq:4}
    H(X_t^\e)=H(x_0)+\e^\beta\int_0^t AH(X_s^\e)ds+\e^{\beta/2}\int_0^t\rcoe dW_s,
\end{equation}
where $A$ is the operator $Au(x)=\frac{1}{2}\sum_{i,j}[\sigma(x)\sigma^*(x)]_{i,j}\cdot\frac{\partial^2}{\partial x_i\partial x_j}u(x)$. 
Note that $AH(x)$ is uniformly bounded, so, due to the $\e^\beta$ factor, the second term on the right hand side does not contribute to the action functional. 
However, this term is important to understand the behavior of the process near the critical points of $H$, where the integrand in the last term  in \eqref{eq:4} vanishes. 
In order to understand large deviations, we should study how the coefficients are averaged along the trajectories of the perturbed process. 

Let $\Gamma$ denote the Reeb graph corresponding to the Hamiltonian $H$, and let $Y: \mathbb{R}^2 \to \Gamma$ be the projection on the graph. 
Let $\bm{\mathrm x}_1, ..., \bm{\mathrm x}_N$ be all the critical points of $H$, and $O_k=Y(\bm{\mathrm x}_k)$, $k=1, ... , N$. 
Let the edges of $\Gamma$ be labeled as $I_1, ... , I_{N'}$ and a symbol $\sim$ between a vertex and an edge means that the vertex is an endpoint of the edge. Then define:
\begin{itemize}
\item$D_i$ is the set of all points $x\in\mathbb R^2$ such that $Y(x)$ belongs to the interior of $I_i$;
\item$C_k=\{x:Y(x)=O_k\}$ is the extremum point $\bm{\mathrm x}_k$ or the separatrix containing $\bm{\mathrm x}_k$; 
\item$C_i(H)=\{x\in D_i:H(x)=H\}$ is one of the connected components of the level set of $H$;
\item$D_i(H_1,H_2)=\{x\in D_i:H_1<H(x)<H_2\}$, provided that $H_1<H_2$, is the set between $C_i(H_1)$ and $C_i(H_2)$;
\item$D_k(\pm\delta)$ is the connected component of the set $\{H(\bm{\mathrm  x}_k)-\delta<x<H(\bm{\mathrm  x}_k)+\delta\}$ containing $C_k$;
\item$D(\pm\delta)=\bigcup_{k}D_k(\pm\delta)$;
\item$C_{ki}(\delta)=\{x\in D_i:H(x)=H(\bm{\mathrm  x}_k)\pm\delta\}$, for $O_k\sim I_i$;
\item$T_i(H)=\oint_{C_i(H)}\frac{1}{|\nabla H(x)|}dl$, for applicable $H$, is the rotation time of system \eqref{eq:1};
\item$B^2_i(H)=\frac{1}{T_i(H)}\oint_{C_i(H)}\frac{|\nabla H(x)^*\sigma(x)|^2}{|\nabla H(x)|}dl$ for $H$ in $ H(I_i)$; $B^2_i(H)=0$ for $H=H(\bm{\mathrm  x}_k)$ and $O_k\sim I_i$, is the ``averaged'' diffusion coefficient w.r.t. the invariant measure on $C_i(H)$.
\end{itemize}

It's worth noting that $T_i(H)$ and $B_i(H)$ have nice regularity properties if $H$ is smooth enough. The following lemma is a direct application of Lemma 8.1.1 from \cite{fred}.
\begin{lemma}
\label{lipschitz}
 $T_i(H)$ and $B^2_i(H)$ are $k-1$ times continuously differentiable at the interior points of the interval $H(I_i)$ if $H$ is $k$ times continuously differentiable. (This implies the Lipschitz continuity of $T_i(H)$, $B^2_i(H)$ and $T_i(H)B_i^2(H)$ on any closed interval $I\subset H(I_i)$ if $k\geq 2$.)
\end{lemma}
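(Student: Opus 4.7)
The plan is to use the Hamiltonian flow $\Phi_t$ of $v=\nabla^\perp H$ to parameterize each level curve by time, thereby reducing the differentiability of $T_i$ and $B_i^2$ to that of the flow together with a few implicitly defined quantities.

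Fix an interior point $h_0$ of $H(I_i)$. Then $C_i(h_0)$ is a smooth simple closed curve containing no critical point of $H$, so $|\nabla H|>c>0$ on a neighborhood $V$ of $C_i(h_0)$. First I would pick a smooth transversal arc $\Sigma\subset V$ meeting each $C_i(h)$ for $h$ near $h_0$ in a unique point $x_0(h)$; since $H$ is $C^k$ and $\nabla H$ is nonvanishing on $\Sigma$, the implicit function theorem gives $h\mapsto x_0(h)$ as a $C^k$ map. Next I would invoke the standard fact that the flow of a $C^{k-1}$ vector field is jointly $C^{k-1}$ in $(t,x)$, so $\Phi_t(x_0(h))$ is $C^{k-1}$ in $(t,h)$.

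Along any orbit one has $dl=|\nabla H(\Phi_t)|\,dt$, so $T_i(h)$ equals the first return time of $\Phi_t(x_0(h))$ to $\Sigma$. Applying the implicit function theorem to the transversal-crossing condition, whose defining equation is $C^{k-1}$ in its arguments, yields $T_i\in C^{k-1}$ near $h_0$. For $B_i^2$, the substitution $dl=|\nabla H|\,dt$ converts the line integral into a time integral
\begin{equation*}
T_i(h)B_i^2(h)=\int_0^{T_i(h)}\bigl|\nabla H(\Phi_t(x_0(h)))^*\sigma(\Phi_t(x_0(h)))\bigr|^2\,dt,
\end{equation*}
whose integrand is $C^{k-1}$ in $(t,h)$ and whose upper limit is $C^{k-1}$ in $h$. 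Leibniz's rule, applied inductively in the order of differentiation, then shows that the integral is $C^{k-1}$ in $h$, and since $T_i(h)$ is bounded below on a neighborhood of $h_0$ this yields $B_i^2\in C^{k-1}$, hence also $T_iB_i^2\in C^{k-1}$.

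The main subtlety is the careful bookkeeping of regularity losses: $H\in C^k$ gives $v\in C^{k-1}$ and hence $\Phi_t\in C^{k-1}$ (losing exactly one derivative), while the implicit function argument for the return time is differentiation in the parameter $h$ and so does not cost a further derivative. Everything after the flow step reduces to standard parameter-dependent integral calculus, so no new obstacle appears. For $k\geq 2$ the resulting $C^1$ regularity gives in particular the Lipschitz continuity claimed in the lemma.
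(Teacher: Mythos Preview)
The paper does not give its own proof of this lemma; it simply records it as ``a direct application of Lemma~8.1.1'' from Freidlin--Wentzell, \emph{Random Perturbations of Dynamical Systems}. Your argument via the flow parameterization of the level curves, the implicit function theorem for the first-return time, and differentiation under the integral sign is the standard route to such regularity statements and is correct; in particular your bookkeeping of the one-derivative loss (from $H\in C^k$ to $v=\nabla^\perp H\in C^{k-1}$, hence $\Phi\in C^{k-1}$) is accurate, and the implicit function step for $T_i$ indeed costs nothing further.

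One minor caveat worth noting, about the statement rather than your proof: the integrand defining $B_i^2$ involves $\sigma$, so the $C^{k-1}$ conclusion for $B_i^2$ tacitly assumes $\sigma$ is at least $C^{k-1}$ as well. Under the paper's standing hypothesis $\sigma\in C^1$ this is only guaranteed for $k\le 2$, which is exactly the case used in the paper (the Lipschitz assertion in parentheses). Your argument goes through verbatim once one adds the matching smoothness assumption on $\sigma$.
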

In Section 2, we state the necessary assumptions and formulate the main result. 
The proof comes in the sections that follow. 
The lower and upper bounds of the local large deviation principle are proved in Sections 3 and 4, respectively. 
Then, exponential tightness is justified in Section 5. 
These arguments, together with Puhalskii's Theorem (cf. \cite{semi} and \cite{Puhalskii}), imply the main result. 
Some technical proofs are given in the Appendix.

\section{Main Results}
Our main result concerns the large  deviation principle for the process $Y(\Tilde X_{t\e^{\beta-1}}^\e)$. Large deviations from averaged behavior are described in terms of the action functional, which (intuitively) gives the probability that the trajectories of the process are close to a given deterministic trajectory. More precisely, we give the formal definition of an action functional (\cite{fred}): \begin{definition}
    Let $(X,\rho)$ be a metric space. Let $\mu^\e$, $\e>0$, be a family of probability measures on $\mathcal B(X)$, $\lambda(\e)$ be a positive real-valued function going to $+\infty$ as $\e\downarrow0$, and $S(x)$ be a function on $X$ with values in $[0,\infty]$. It is said that $\lambda(\e)S(x)$ is an action function (action functional if $X$ is a function space) for $\mu^\e$ as $\e\downarrow0$ if:
\begin{enumerate}
\item[(0)]the set $\Phi(s)=\{x:S(x)\leq s\}$ is compact for every $s\geq0$;
\item[(1)] for each $\delta>0$, each $\gamma>0$, and each $x\in X$, there exists $\e_0>0$ such that$$\mu^\e\{y:\rho(x,y)<\delta\}\geq\exp\{-\lambda(\e)[S(x)+\gamma]\}$$ for all $\e\leq \e_0$;
\item[(2)] for each $\delta>0$, each $\gamma>0$, and each $s>0$, there exists $\e_0>0$ such that $$\mu^\e\{y:\rho(y,\Phi(s))\geq\delta\}\leq\exp\{-\lambda(\e)(s-\gamma)\}$$ for all $\e\leq \e_0$.
\end{enumerate}
\end{definition}

Consider the following metric on $\Gamma$: $r(y_1,y_2)$ is the length of the shortest path connecting $y_1$ and $y_2$. For example, if $y_1=(i,H_1)$, $I_1\sim O_1$, $O_1\sim I_2$, $I_2\sim O_2$, $O_2\sim I_3$ and $y_2=(3,H_2)$, then $r(y_1,y_2)=|H_1-H(\bm{\mathrm  x}_1)|+|H(\bm{\mathrm  x}_1)-H(\bm{\mathrm  x}_2)|+|H(\bm{\mathrm  x}_2)-H_2|$. Based on this metric, we introduce the uniform metric on $\textbf{C}\left([0,T],\Gamma\right)$: $\rho_{0,T}(\bm\varphi,\bm\psi)=\sup_{0\leq t\leq T}r(\bm\varphi(t),\bm\psi(t))$. In the metric space $(\textbf{C}\left([0,T],\Gamma\right),\rho_{[0,T]})$, we consider the probability measures induced by the family of process $Y(\Tilde X_{t\e^{\beta-1}}^\e)$. 
We use the conventions $0/0=0$, $\sup\{\emptyset\}=-\infty$, and $\inf\{\emptyset\}=+\infty$. 
A trajectory $\bm\varphi$ on $\Gamma$ can be written in terms of its components as $(i_t,\varphi_t)$.
Define the functional $S(\bm\varphi)$ on $\textbf{C}\left([0,T],\Gamma\right)$: $$S(\bm\varphi)=\frac{1}{2}\int_0^T\frac{|\dot\varphi_t|^2}{B_{i_t}^2(\varphi_t)}dt$$for $\varphi\in\textbf{C}\left([0,T],\mathbb R\right)$ that is absolutely continuous and satisfies $\dot\varphi_t/B_{i_t}(\varphi_t)\in L^2([0,T])$; otherwise $S(\bm\varphi)=\infty$. 
Note that if $\bm\varphi_t=O_k$ for some interior vertex $O_k$, then we have different representations of $\bm\varphi_t$ since it's on multiple edges at the same time. 
However, by the definition of $B^2$, there's no ambiguity because, according to our conventions, the integrand is defined as $0$ at such vertices (see Lemma~\ref{zeroder}).
The following conditions are assumed to hold throughout the paper: 
\begin{enumerate}
\item The Hamiltonian $H(x)$, $x\in\mathbb R^2$, is four times continuously differentiable with bounded second derivatives.
\item For sufficiently large $|x|$, there exist positive $A_1$, $A_2$ and $A_3$, such that $H(x)\geq A_1|x|^2$, $|\nabla H(x)|\geq A_2|x|$, and $\Delta H(x)\geq A_3$.
\item $H(x)$ has a finite number of critical points, and the matrix of second derivatives is nondegenerate at those points.
\item Each level curve corresponding to a vertex on the Reeb graph contains at most one critical point.
\item$\sigma(x)$ is continuously differentiable with bounded derivatives and $\sigma(x)\sigma^*(x)$ is bounded and uniformly positive-definite.
\end{enumerate}

Now we are ready to formulate the main result:
\begin{theorem}
\label{main}
    Let the assumptions above be satisfied and let the process $\Tilde X_t^\e$ be defined as in \eqref{eq:2}. Then $\e^{-\beta}S(\bm\varphi)$ is the action functional of the family of process $Y(\Tilde X_{t\e^{\beta-1}}^\e)$ in the sense of the uniform metric on $\textbf{C}([0,T],\Gamma)$ restricted to the set of functions that start at $Y(x_0)$.
\end{theorem}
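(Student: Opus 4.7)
The plan is to derive Theorem~\ref{main} by verifying the hypotheses of Puhalskii's theorem: a local large deviation lower bound, a local large deviation upper bound, and exponential tightness of the family $Y(\Tilde X_{t\e^{\beta-1}}^\e)$ in $\textbf{C}([0,T],\Gamma)$. Once the three ingredients are in place, Puhalskii's theorem upgrades the local statements to the global LDP with rate $\e^{-\beta}S$. Before that, compactness of the level sets $\Phi(s)=\{S\le s\}$ has to be checked: by Cauchy--Schwarz, $|\varphi_t-\varphi_s|\le\sqrt{2s}\,\bigl(\int_s^t B^2_{i_u}(\varphi_u)du\bigr)^{1/2}$, and since $B_i^2$ is bounded on each edge (Lemma~\ref{lipschitz}), $\Phi(s)$ consists of equicontinuous paths taking values in a locally compact graph. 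Arzel\`a--Ascoli, together with an argument confining excursions to finitely many edges using the growth assumption on $H$, gives compactness.

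For the local lower bound, I would fix a smooth target $\bm\varphi=(i_t,\varphi_t)$ with $S(\bm\varphi)<\infty$ and approximate it by piecewise smooth trajectories staying away from the vertices except at finitely many times. Then a Girsanov change of measure with drift chosen so that $H(X_t^\e)$ tracks $\varphi_t$ produces a Radon--Nikodym derivative whose exponent is $-\tfrac12\e^{-\beta}\int_0^T\dot\varphi_t^{\,2}/B_{i_t}^2(\varphi_t)\,dt+o(\e^{-\beta})$, once averaging replaces the pointwise diffusion coefficient $|\rcoe|^2$ along the fast flow by the averaged coefficient $T_{i_t}(\varphi_t)B_{i_t}^2(\varphi_t)$. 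The averaging is legitimate on each slow time increment because the deterministic rotation completes of order $\e^{1-\beta}/T_i$ loops per unit time, which is large as $\e\downarrow0$.

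For the local upper bound I would discretize $[0,T]$ into mesh $1/n$ and, on each interval staying away from the critical levels, apply an exponential Chebyshev inequality to the martingale increment $\e^{\beta/2}\int_{k/n}^{(k+1)/n}\rcoe\,dW_s$, whose quadratic variation is close to $\e^\beta\int T_{i_s}B_{i_s}^2\,ds$ after averaging. A union bound over a finite cover of $\Phi(s)$ by discretized trajectories yields the claimed exponential decay, while the drift term in \eqref{eq:4} is $O(\e^\beta)$ and contributes only negligibly. Exponential tightness follows by the same exponential martingale inequality applied to $H(X_t^\e)$ on intervals $|s-t|\le 1/n$, combined with control of possible excursions across vertices.

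The main obstacle, which is why the argument cannot be a straight transcription of Freidlin--Wentzell theory on $\mathbb R$, is the behaviour near the critical points: at saddles $T_i(H)\to\infty$, so the averaging of $|\rcoe|^2$ breaks down precisely where the trajectory must switch edges of $\Gamma$. Uniform estimates on the transit time through and the exit distribution from the shrinking neighbourhoods $D_k(\pm\delta)$ are needed, showing their contribution to the action is $o(1)$ as $\delta\downarrow0$ uniformly in $\e$, for both bounds. In the special case $\beta=1/2$, such estimates were obtained in \cite{MR1883738}; extending them to arbitrary $\beta\in(0,1)$ is the technical core of Sections~3--5.
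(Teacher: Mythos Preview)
Your overall architecture---local lower bound, local upper bound, exponential tightness, then Puhalskii's theorem, together with compactness of the level sets $\Phi(s)$---is exactly the paper's strategy, and your upper bound sketch (discretise, exponential Chebyshev on the martingale increments, averaging of the quadratic variation) is close to what the paper does in Lemma~\ref{condition3}. One slip: the time-averaged quadratic variation of $\e^{\beta/2}\int\nabla H^*\sigma\,dW$ over a unit slow-time interval is $\e^\beta\int B_{i_s}^2(\varphi_s)\,ds$, not $\e^\beta\int T_{i_s}B_{i_s}^2\,ds$; the period $T_i$ cancels because the number of rotations is $1/(T_i\e^{1-\beta})$.

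The lower bound is where you diverge from the paper, and there is a genuine pitfall. The paper does \emph{not} use Girsanov; instead it represents $\e^{\beta/2}\int\nabla H^*\sigma\,dW$ as a time-changed Brownian motion $\tilde W(\langle Z\rangle_t)$, controls $\langle Z\rangle_t$ by averaging (this is where $B^2$ enters), and estimates directly the probability that $\tilde W$ lands in the right window. Your Girsanov route can be made to work, but not as stated: if you pick the naive drift $u_t=\e^{-\beta/2}\dot\varphi_t\,\sigma^*\nabla H/|\nabla H^*\sigma|^2$ so that $H(X^\e_t)$ tracks $\varphi_t$ pointwise, the cost is $\tfrac12\e^{-\beta}\int\dot\varphi_t^2/|\nabla H^*\sigma(X^\e_t)|^2\,dt$, and averaging produces the mean of $1/|\nabla H^*\sigma|^2$ along the orbit, which by Jensen is \emph{strictly larger} than $1/B^2$ in general. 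To recover the correct rate you must modulate the drift along each fast orbit, taking $u_t=\e^{-\beta/2}(\dot\varphi_t/B^2)\,\sigma^*\nabla H$ so that the instantaneous push on $H$ is proportional to $|\nabla H^*\sigma|^2$ and only its orbit-average equals $\dot\varphi_t$; then the cost per unit time is exactly $\tfrac12\dot\varphi_t^2/B^2$. Without this correction the Girsanov lower bound is too weak.

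Finally, you are right that the behaviour near interior vertices is the crux and cannot be handled by the edge argument; the paper devotes Section~3.3 to a multi-scale analysis (Morse coordinates near the saddle, passage from $C_{ki}(\e^{\frac{1+d}{2}\beta})$ through a sequence of levels $\e^{a\beta}$ down to $\e^{d\beta}$ and then to a fixed $\delta$) that is considerably more delicate than ``uniform estimates on transit time''. Your proposal acknowledges this but does not supply a mechanism; that is where most of the work lies.
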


\section{Lower bound for the Large Deviation Principle}
Let us prove the lower bound for the large deviation principle. First, we look at a process that starts and evolves inside a single edge. Next, we deal with a process that starts at an exterior vertex but stays within an edge. Finally, we estimate the probability for a process to travel through an interior vertex.

\subsection{The case where the process evolves inside one edge}
In this section, the process evolves only inside one edge (for example $I_i$). Therefore we simplify the notations when there is no ambiguity: $T_i(H)\to T(H)$; $B_i^2(H)\to B^2(H)$; $S(\bm{\varphi})=S(i,\varphi)\to S(\varphi)$, etc. The following lemma tells us that if the process starts sufficiently near the initial point of the the deterministic trajectory, it has a good chance to follow the trajectory and arrive in a tiny neighborhood of where the trajectory ends. 
\begin{lemma}
\label{loclem}
For every $\delta>0$, $\gamma>0$, and $\bm{\varphi}_t=(i,\varphi_t)\in\textbf{C}\left([0,T], I_i\right)$ such that $\inf_{t\in[0,T]}r(\varphi_t,H(O))>0$ for each $O\sim I_i$, there exists $h>0$ such that for every $h'>0$ there exists $\e_0>0$ such that for every initial point $X^\e_0=x$ satisfying $|H(x)-\varphi_0|<h$,
$${\bm{\mathrm P}}(|H(X_T^\e)-\varphi_T|<h',\ \rho_{0,T}(H(X_t^\e),\varphi_t)<\delta)\geq\exp\left[-\e^{-\beta}(S(\varphi)+\gamma)\right],$$for all $\e<\e_0$.
\label{lowerbound}
\end{lemma}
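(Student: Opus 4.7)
The natural approach is Girsanov's change of measure combined with an averaging argument, exploiting that the fast rotation on level curves inside $I_i$ has period $\e^{1-\beta}T(\varphi_t)$, much shorter than the macroscopic time $T$ on which $\varphi$ varies. To handle the arbitrary initial condition $X_0^\e=x$ with $|H(x)-\varphi_0|<h$, I would first replace $\varphi$ by a modified path $\tilde\varphi$ that starts at $H(x)$, agrees with $\varphi$ on $[\tau,T]$ for some small $\tau=\tau(\gamma,\varphi)$, and interpolates linearly on $[0,\tau]$; since $\dot\varphi/B\in L^2$ forces $\varphi$ to be $1/2$-Holder, $\tau$ and $h$ can be chosen so small that $S(\tilde\varphi)\le S(\varphi)+\gamma/2$ and $\|\tilde\varphi-\varphi\|_\infty<\delta/2$. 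Since any drift we add will be averaged along level curves, it suffices to match $\dot{\tilde\varphi}_t$ in the averaged sense, and a Cauchy--Schwarz computation singles out the cheapest control
$$u(x,t)=\frac{\dot{\tilde\varphi}_t}{\e^{\beta/2}\,B^2(\tilde\varphi_t)}\,\sigma^*(x)\nabla H(x),$$
whose averaged cost, via the identity $\tfrac1{T(H)}\oint\tfrac{|\nabla H^*\sigma|^2}{|\nabla H|}dl=B^2(H)$, satisfies $\tfrac12\int_0^T\langle|u|^2\rangle dt=\e^{-\beta}S(\tilde\varphi)$.

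Form $\tilde{\bm{\mathrm P}}$ with $d\tilde{\bm{\mathrm P}}/d\bm{\mathrm P}=\exp(\int_0^T u\cdot dW-\tfrac12\int_0^T|u|^2 dt)$; Novikov's condition is trivial since $\int|u|^2 dt$ is deterministically bounded. Under $\tilde{\bm{\mathrm P}}$, $\tilde W_t=W_t-\int_0^t u\,ds$ is Brownian, and Ito's formula gives
$$dH(X_t^\e)=\Bigl[\e^\beta AH(X_t^\e)+\frac{|\nabla H^*\sigma|^2(X_t^\e)}{B^2(\tilde\varphi_t)}\dot{\tilde\varphi}_t\Bigr]dt+\e^{\beta/2}\nabla H(X_t^\e)^*\sigma(X_t^\e)\,d\tilde W_t.$$
Let $A$ denote the event in the lemma and set $B=\{\tfrac12\int|u|^2 dt\le\e^{-\beta}(S(\varphi)+3\gamma/4)\}\cap\{|\int u\,d\tilde W|\le\e^{-\beta}\gamma/8\}$. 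Inversion of the density yields $\bm{\mathrm P}(A)=\tilde{\bm{\mathrm E}}[\mathbf 1_A\exp(-\int u\,d\tilde W-\tfrac12\int|u|^2 dt)]\ge\tilde{\bm{\mathrm P}}(A\cap B)\exp(-\e^{-\beta}(S(\varphi)+\gamma))$, so it suffices to show $\tilde{\bm{\mathrm P}}(A\cap B)\ge 1/2$ for $\e$ small.

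Three averaging tasks then close the argument. (i) $\tilde{\bm{\mathrm P}}(A)\to 1$: the drift of $H(X_t^\e)-\tilde\varphi_t$ averages to $0$ by the identity above, while the martingale part has quadratic variation $O(\e^\beta)$; dividing $[0,T]$ into slices of length $\e^{1-\beta}|\log\e|$ on which $\tilde\varphi_t$ and $B^2(\tilde\varphi_t)$ are essentially constant (by Lemma~\ref{lipschitz}) and comparing $X_t^\e$ to the deterministic rotation on $C_i(\tilde\varphi_{t_k})$ via Gronwall yields $\sup_t|H(X_t^\e)-\tilde\varphi_t|<\delta/2$, hence both $\rho_{0,T}(H(X^\e_\cdot),\varphi)<\delta$ and $|H(X_T^\e)-\varphi_T|<h'$ for $\e$ small. (ii) The first inclusion in $B$: an analogous averaging estimate replaces $|\sigma^*\nabla H(X_t^\e)|^2/B^4(\tilde\varphi_t)$ by $1/B^2(\tilde\varphi_t)$, giving $\tfrac12\int_0^T|u|^2 dt=\e^{-\beta}S(\tilde\varphi)+o(\e^{-\beta})\le\e^{-\beta}(S(\varphi)+3\gamma/4)$. (iii) The second inclusion follows from BDG and Chebyshev, since the martingale has quadratic variation $O(\e^{-\beta})$, hence typical size $O(\e^{-\beta/2})\ll\e^{-\beta}\gamma$.

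The hard part is the quantitative averaging in (i). Lemma~\ref{lipschitz} supplies Lipschitz regularity of $T$, $B^2$, and $TB^2$, and the standing hypothesis $\inf_t r(\varphi_t,H(O))>0$ for $O\sim I_i$ confines $\tilde\varphi_t$ to a compact sub-interval of $H(I_i)$ on which $T$, $B^2$, and $|\nabla H|$ are bounded and bounded away from zero. What remains is to establish that with $\tilde{\bm{\mathrm P}}$-probability close to $1$, $X_t^\e$ genuinely completes near-periodic rotations on $C_i(\tilde\varphi_t)$ with drift well-approximated by its level-curve average, so the error made in replacing instantaneous quantities by their averages is negligible on each slice and does not accumulate over $[0,T]$. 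The standard device is to introduce a transversal section to $C_i(H)$, parametrize each excursion between two successive crossings by an auxiliary clock, compare it with the noise-free rotation by Gronwall, and sum the errors via the Lipschitz bounds; boundedness away from the vertices is precisely what makes all these estimates uniform.
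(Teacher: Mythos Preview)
Your Girsanov-based approach is valid but genuinely different from the paper's, which never changes measure. The paper partitions $[0,T]$ into $n$ equal pieces, on each piece represents $\e^{\beta/2}\int\nabla H^*\sigma\,dW$ as a time-changed one-dimensional Brownian motion, and uses an explicit Gaussian lower bound for the probability that $H(X^\e_{(k+1)/n})$ lands in an $h'$-window around $\varphi_{(k+1)/n}$; these are then multiplied via the Markov property. The averaging enters exactly as you describe---Gron\-wall comparison with the deterministic rotation over one period $T^\e_k=\e^{1-\beta}T(H(X^\e_{k/n}))$ and the Lipschitz bounds of Lemma~\ref{lipschitz}---but it is used to pin the random time-change $\langle Z\rangle_{1/n}$ between $(1\pm\alpha)\e^\beta\int_{k/n}^{(k+1)/n}B^2(\varphi_s)\,ds$, rather than to control a tilted drift. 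Your route is the classical control-theoretic one: the optimal drift $u$ makes the action functional appear as the Girsanov cost, and the tilted dynamics become a genuine averaging problem under $\tilde{\bm{\mathrm P}}$. Both schemes rest on the same rotation-by-rotation averaging estimate; your approach is more conceptual and transparently identifies $S(\varphi)$ as an optimal-control cost, while the paper's is more elementary and yields the bound without constructing an absolutely continuous change of measure or invoking Novikov. One point to tighten in your sketch: $\tilde\varphi$ and hence $u$ depend on the initial point $x$ through $H(x)$, so the uniformity in $x$ over $\{|H(x)-\varphi_0|<h\}$ required by the lemma must be checked explicitly; since $h$ is fixed before $\e_0$ and all estimates depend on $H(x)$ only through a compact interval, this is routine but should be said.
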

\begin{proof}
We present the proof in several steps. \textbf{\romannumeral 1.} We can assume that $h'\leq h$ and that $\varphi$ is absolutely continuous with $\varphi'\in L^2([0,T])$ because $S(\varphi)=\infty$ for any other $\varphi$. 
Also, without loss of generality, we assume that $T=1$, $O_j\sim I$, $O_k\sim I$, $H(\bm{\mathrm  x}_j)<H(\bm{\mathrm  x}_k)$, and $H(\bm{\mathrm  x}_j)=0$. 
We can assume that $0<\Tilde m\leq\varphi\leq\Tilde M$ and $\delta<\frac{\Tilde m}{2}\wedge\frac{H(\bm{\mathrm  x}_k)-\Tilde M}{2}$. 
Let $C=\overline{D_i(\Tilde m-\delta,\Tilde M+\delta)}$, which is a compact set in $\mathbb R^2$.
Then there exist $m>0$, $M>0$, $L>0$ such that:
\begin{enumerate}
\item[(1)] $\lvert AH(x)\rvert+\lvert\sigma(x)\rvert<M,\ \forall x\in\mathbb R^2;\ |\nabla H(x)^*\sigma(x)|^2<M,\ \forall x\in C$;
\item[(2)] $|\nabla H(x)^*\sigma(x)|^2>m,\ \forall x\in C$;
\item[(3)] $m<T_i(H)<M\text{  and  }m<B_i^2(H)<M,\ \forall H\in\{H:r(H,Ran(\varphi))<\delta\}$;
\item[(4)] $\lvert\nabla H(x)-\nabla H(y)\rvert+\left|\lvert\nabla H(x)^*\sigma(x)\rvert^2-\lvert\nabla H(y)^*\sigma(y)\rvert^2\right|<L\lvert x-y\rvert,\ \forall x,y\in C$;
\item[(5)] $\lvert T_i(H_1)-T_i(H_2)\rvert+\lvert T_i(H_1)B_i^2(H_1)-T_i(H_2)B_i^2(H_2)\rvert<L\lvert H_1-H_2\rvert,\ \forall H_1,H_2\in\{H:r(H,Ran(\varphi))<\delta\}$ (see Lemma~\ref{lipschitz}).
\end{enumerate}
With $m,M,L$ selected, we choose the parameters as follows:
\begin{enumerate}
\item[(1)]$\delta'>0$ such that $\delta'<\delta$, $\alpha:=\frac{8L(M+1)}{m^2}\delta'<1/2$ and $\frac{\alpha}{1-\alpha}\int_0^1 \frac{|\varphi'_t|^2}{B^2(\varphi)}dt<\gamma$;
\item[(2)]$r>0$ such that $r<\frac{m}{24M}$;
\item[(3)]$n\in\mathbb N$ such that $a,b\in[0,T]$ with $|a-b|<1/n$ implies $|\varphi_a-\varphi_b|<r\delta'<\frac{1}{12}\delta'$;
\item[(4)]$h>0$ such that $h<r\delta'$ and $\frac{\alpha}{1-\alpha}\int_0^1 \frac{|\varphi'_t|^2}{B^2(\varphi)}dt+\frac{1}{1-\alpha}\cdot\frac{n^2h\delta'}{m^2}<\gamma$
\end{enumerate}

\textbf{\romannumeral2.} Now let us study how $X_t^\e$ behaves on one interval $[\kk,\kl]$. To start with, $X_t^\e$ satisfies the equation: $$dX_t^\e=\e^{\beta-1}\nabla^{\perp}H(X_t^\e)+\e^{\beta/2}\sigma(X_t^\e)dW_t,\ \ \ \ \ \ X_0^\e=x.$$
For every $0\leq k<n$, define $A_k$ as the event where the process is not perturbed significantly by the diffusion during the time required to make one rotation (which will allow us to use averaging) and does not travel far from the original level curve. More precisely,
\begin{align*}
A_k=&\bigcap_{p=0}^{\lfloor\frac{1}{nT^\e_k}\rfloor}\left\{\sup_{pT^\e_k+\kk\leq t\leq (p+1)T^\e_k+\kk}\e^{\beta/2}\left|\int_{k/n+pT^\e_k}^t \sigma(X_s^\e)dW_s\right|\leq M\cdot\e^{\frac{1-\beta}{4}}\right\}\\
&\ \bigcap\ \ \left\{\sup_{0\leq t\leq\frac{1}{n}}\left|H(X_{\kk+t}^\e)-H(X_\kk^\e)\right|\leq\frac{\delta'}{2}\right\},
\end{align*}where $T^\e_k=\e^{1-\beta}T(H(X_\kk^\e))$. 
By Lemma \ref{ak}, for the initial point $X_k^\e=x$ satisfying $|H(x)-\varphi_\kk|<h$,
$${\bm{\mathrm P}}\left(\Omega\setminus A_k\right)\leq\exp\left(-\frac{n{\delta'}^2}{64M\e^{\beta}}\right)=:\lambda_3(\e).$$
For each $0\leq p\leq\lfloor\frac{1}{nT^\e_k}\rfloor$ and $t\geq\kk+pT^\e_k$, define a deterministic process $\xi_t^\e$ with random initial data by the equation:
$$d\xi_t^\e=\e^{\beta-1}\nabla^\perp H(\xi_t^\e)dt,\ \ \ \ \xi_{\kk+pT^\e_k}^\e=X^\e_{\kk+pT^\e_k}.$$
On the event $A_k\cap\left\{\lvert\varphi_\kk-H(X_\kk^\e)\rvert<h\right\}\subset \{X_t^\e,\xi_t^\e\in C,\ t\in[\kk,\kl]\}$, estimate the difference $\lvert X_t^\e-\xi_t^\e\rvert$ on the interval $[\kp,\kpp]$:
\begin{align*}
    \lvert X_t^\e-\xi_t^\e\rvert\leq\ &\e^{\beta-1}\int_{k/n+pT^\e_k}^t\left|\nabla H^\perp(X_s^\e)-\nabla H^\perp(\xi_s^\e)\right|ds+\e^{\beta/2}\left|\int_{k/n+pT^\e_k}^t\sigma(X_s^\e)dW_s\right|\\
    \leq\ &\e^{\beta-1}\int_{k/n+pT^\e_k}^tL\left|X_s^\e-\xi_s^\e\right|ds+M\cdot\e^{\frac{1-\beta}{4}}.
\end{align*}
So, for $\e$ small enough, by the Gronwall's inequality,
\begin{equation}
    \lvert X_t^\e-\xi_t^\e\rvert\leq M\exp(L\cdot T(H(X_\kk^\e)))\cdot\e^{\frac{1-\beta}{4}}\leq M\exp(LM)\cdot\e^{\frac{1-\beta}{4}}\leq\frac{\delta'}{M}.
    \label{eq:4.1}
\end{equation}
We approximate $\lvert\rcoe\rvert^2$ with $B^2(\varphi_s)$ on the interval $[\kp,\kpp]$, still on the event $A_k\cap\left\{\lvert\varphi_\kk-H(X_\kk^\e)\rvert<h\right\}$:
\begin{align*}
    &\int_\kp^\kpp\lvert\rcoe\rvert^2ds\\ 
    \leq &\int_\kp^\kpp\lvert\nabla H(\xi_s^\e)^*\sigma(\xi_s^\e)\rvert^2ds+LT^\e_k\cdot\frac{\delta'}{M}\\
    \leq&\int_\kp^{\frac{k}{n}+pT^\e_k+\e^{1-\beta}T(H(X_\kp^\e))}\lvert\nabla H(\xi_s^\e)^*\sigma(\xi_s^\e)\rvert^2ds+\e^{1-\beta}\cdot L\delta'M+\e^{1-\beta}T(H(X_\kk^\e))L\cdot\frac{\delta'}{M}\\
    \leq&\ \e^{1-\beta}T(H(X_\kp^\e))B^2(H(X_\kp^\e))+\e^{1-\beta}\cdot L\delta'(M+1)\\
    \leq&\ \e^{1-\beta}T(\varphi_\kp)B^2(\varphi_\kp)+\e^{1-\beta}\cdot L\delta'(M+2)\\
    \leq&\ \e^{1-\beta}T(H(X_\kk^\e))B^2(\varphi_\kp)+\e^{1-\beta}\cdot L\delta'(2M+2)\\
    =&\ T^\e_k B^2(\varphi_\kp)+2\e^{1-\beta}\cdot L\delta'(M+1),
\end{align*}
where the first inequality is by \eqref{eq:4.1}, the second inequality is by Lipschitz continuity of $T(H)$ and boundedness of $\lvert\nabla H(\xi_s^\e)^*\sigma(\xi_s^\e)\rvert$, and the fourth inequality is by the Lipschitz continuity of $T(H)B^2(H)$.

With the same reasoning, we get an estimate in the other direction:
$$\int_\kp^\kpp\lvert\rcoe\rvert^2ds\geq T^\e_k B^2(\varphi_\kp)-2\e^{1-\beta}\cdot L\delta'(M+1).$$
Note that these are valid for every $0\leq p\leq\lfloor\frac{1}{nT^\e_k}\rfloor$. Therefore, on the event $A_k\cap\left\{\lvert\varphi_\kk-H(X_\kk^\e)\rvert<h\right\}$, $\int_\kk^\kl|\rcoe|^2$ can also be approximated by $\int_\kk^\kl B^2(\varphi_s)$ on the interval $[\kk,\kl]$,
\begin{align*}
    &\int_\kk^\kl\lvert\rcoe\rvert^2ds\\
    \leq\ &\sum_{p=0}^{\lfloor\frac{1}{nT^\e_k}\rfloor-1}\int_\kp^\kpp\lvert\rcoe\rvert^2ds+T^\e_k M\\
    \leq\ &\sum_{p=0}^{\lfloor\frac{1}{nT^\e_k}\rfloor-1}T^\e_k B^2(\varphi_\kp)+\frac{1}{nT^\e_k}\cdot2\e^{1-\beta}\cdot L\delta'(M+1)+T^\e_k M\\
    \leq\ &(1+\alpha/2)\int_\kk^\kl B^2(\varphi_s)ds+\frac{4L(M+1)}{mn}\cdot\delta'\\
    \leq\ &(1+\alpha)\int_\kk^\kl B^2(\varphi_s)ds,
\end{align*}
for $\e$ sufficiently small.
And similarly,$$\int_\kk^\kl\lvert\rcoe\rvert^2ds\geq(1-\alpha)\int_\kk^\kl B^2(\varphi_s)ds.$$

\textbf{\romannumeral 3.} We proceed to estimate the conditional probability that $H(X_t^\e)$ follows $\varphi_t$ on the interval $[\kk,\kl]$, given that $H(X_\kk^\e)$ is close enough to $\varphi_\kk$. 
Let $c_k=\int_\kk^\kl B^2(\varphi_t)dt$, $s_0=(1-\alpha)c_k\e^\beta$, $s_1=(1+\alpha)c_k\e^\beta$, $P_k=\varphi_{\kl}-H(X_\kk^\e)$, and 
$$Z_t=\e^{\beta/2}\int_\kk^{\kk+t}\rcoe dW_s=\Tilde W(\langle Z\rangle_t),$$ 
where $\langle Z\rangle_t=\e^\beta\int_\kk^{\kk+t}\lvert\rcoe\rvert^2ds$ and $\Tilde W$ is a one-dimensional Wiener process. Earlier, we saw that $s_0\leq\langle Z\rangle_{\frac{1}{n}}\leq s_1$ on the event $A_k\cap\left\{\lvert\varphi_\kk-H(X_\kk^\e)\rvert<h\right\}$.
On the other hand, the increment in $H(X_t^\e)$ is almost $Z_{1/n}$, since for $\e$ small enough
$$\left|H(X_\kl^\e)-H(X_{\kk}^\e)-Z_{1/n}\right|=\e^\beta\left|\int_\kk^{\kl}AH(X_s^\e)ds\right|\leq\e^\beta\frac{M}{n}<h'/2.$$
Then, for the initial point $X_k^\e=x$ satisfying $|H(x)-\varphi_\kk|<h$,
\begin{align*}
    &{\bm{\mathrm P}}\left(\left\{\lvert\varphi_\kl-H(X_\kl^\e)\rvert<h'\right\}\cap A_k\right)\\
    =\ &{\bm{\mathrm P}}\left(\left\{\lvert H(X_\kl^\e)-H(X_\kk^\e)-P_k\rvert<h'\right\}\cap A_k\right)\\
    \geq\ &{\bm{\mathrm P}}\left(\left\{\left|Z_{1/n}-P_k\right|<\frac{h'}{2}\right\}\cap A_k\right)\\
    =\ &{\bm{\mathrm P}}\left(\left\{\left|\Tilde W(\langle Z\rangle_{\frac{1}{n}})-P_k\right|<\frac{h'}{2}\right\}\cap\left\{s_0\leq\langle Z\rangle_{\frac{1}{n}}\leq s_1\right\}\cap A_k\right)\\
    \geq\ &{\bm{\mathrm P}}\left(\left\{\sup_{s_0\leq s\leq s_1}\lvert \Tilde W_s-P_k\rvert<\frac{h'}{2}\right\}\cap A_k\right)\\
    \geq\ &{\bm{\mathrm P}}\left(\sup_{s_0\leq s\leq s_1}\lvert \Tilde W_s-P_k\rvert<\frac{h'}{2}\right)-\lambda_3(\e)\\
    \geq\ &{\bm{\mathrm P}}\left(\lvert \Tilde W_{s_0}-P_k\rvert<\frac{h'}{4},\sup_{s_0\leq s\leq s_1}\lvert \Tilde W_{s}-\Tilde W_{s_0}\rvert<\frac{h'}{4}\right)-\lambda_3(\e)\\
    =\ &\frac{1}{\sqrt{2\pi s_0}}\int_{P_k-h'/4}^{P_k+h'/4}\exp(-\frac{y^2}{2s_0})dy\cdot {\bm{\mathrm P}}\left(\sup_{s_0\leq s\leq s_1}\lvert \Tilde W_{s}-\Tilde W_{s_0}\rvert<\frac{h'}{4}\right)-\lambda_3(\e)\\
    \geq\ &\frac{h'}{2}\frac{1}{\sqrt{2\pi s_0}}\exp\left(-\frac{1}{2s_0}\left(\lvert\varphi_\kl-\varphi_\kk\rvert+2h\right)^2\right)\cdot\frac{1}{2}-\lambda_3(\e)\\
    \geq\ &2\exp\left(-\frac{\left(\lvert\varphi_\kl-\varphi_\kk\rvert+2h\right)^2}{2(1-\alpha)\e^\beta\int_\kk^\kl B^2(\varphi_t)dt}\right)-\exp\left(-\frac{n{\delta'}^2}{64M\e^\beta}\right)\\
    \geq\ &\exp\left(-\frac{\left(\lvert\varphi_\kl-\varphi_\kk\rvert+2h\right)^2}{2(1-\alpha)\e^\beta\int_\kk^\kl B^2(\varphi_t)dt}\right).
\end{align*}

\textbf{\romannumeral 4.} Since the previous result is valid for every $0\leq k<n$,
\begin{align*}
    &{\bm{\mathrm P}}\left(|H(X_1^\e)-\varphi_1|<h',\ \rho_{0,1}(H(X_t^\e),\varphi_t)<\delta\right)\\
    \geq\ &{\bm{\mathrm P}}\left(|H(X_1^\e)-\varphi_1|<h',\ \sup_{0\leq t\leq 1}\lvert H(X_t^\e)-\varphi_t\rvert<\delta'\right)\\
    \geq\ &{\bm{\mathrm P}}\left(\bigcap_{k=0}^{n-1}A_k\cap\left\{\lvert H(X_\kk^\e)-\varphi_\kk\rvert<h'\text{  for every   }1\leq k\leq n \right\}\right)\\
    \geq\ &\prod_{k=0}^{n-1}\inf_{y:|H(y)-\varphi_\kk|<h}{\bm{\mathrm P}}\left(\left\{\lvert\varphi_\kl-H(X_\kl^\e)\rvert<h'\right\}\cap A_k\mathrel{\stretchto{\mid}{4ex}} X_\kk^\e=y\right)\\
    \geq\ &\prod_{k=0}^{n-1}\exp\left(-\frac{\left(\lvert\varphi_\kl-\varphi_\kk\rvert+2h\right)^2}{2(1-\alpha)\e^\beta\int_\kk^\kl B^2(\varphi_t)dt}\right)\\
    =\ &\exp\left(-\frac{1}{2\e^\beta(1-\alpha)}\sum_{k=0}^{n-1}\left(\frac{\left|\int_\kk^\kl \dot\varphi_tdt\right|^2}{\int_\kk^\kl B^2(\varphi_t)dt}+4h\frac{h+\left|\int_\kk^\kl \dot\varphi_tdt\right|}{\int_\kk^\kl B^2(\varphi_t)dt}\right)\right)\\
    \geq\ &\exp\left(-\frac{1}{2\e^\beta(1-\alpha)}\sum_{k=0}^{n-1}\left(\int_\kk^\kl\frac{\lvert\dot\varphi_t\rvert^2}{B^2(\varphi_t)}dt+\frac{nh\delta'}{m}\right)\right)\\
    \geq\ &\exp\left(-\frac{1}{2\e^\beta}\left(\frac{1}{1-\alpha}\int_0^1\frac{\lvert\dot\varphi_t\rvert^2}{B^2(\varphi_t)}dt+\frac{1}{1-\alpha}\cdot\frac{n^2h\delta'}{m}\right)\right)\\
    \geq\ &\exp\left(-\frac{1}{2\e^\beta}\left(\int_0^1\frac{\lvert\dot\varphi_t\rvert^2}{B^2(\varphi_t)}dt+\frac{\alpha}{1-\alpha}\int_0^1\frac{\lvert\dot\varphi_t\rvert^2}{B^2(\varphi_t)}dt+\frac{1}{1-\alpha}\cdot\frac{n^2h\delta'}{m}\right)\right)\\
    \geq\ &\exp\left(-\e^{-\beta}\left(S(\varphi)+\gamma\right)\right).
\end{align*}

\end{proof}

\subsection{The case where the process starts at an exterior vertex}
In this section, we assume that $\bm{\mathrm  x}_0=(0,0)$ (the origin in $\mathbb R^2$) is a local minimum point and $H(\bm{\mathrm  x}_0)=0$, without loss of generality. We aim to estimate from below the probability that the random process starting at an exterior vertex escapes from a certain neighborhood of the vertex sufficiently fast. We start with the following lemma. Again, we simplify the notations as what we did in the preceding section. 

\begin{lemma}
Suppose that $X^\e_0=\bm{\mathrm x}_0$. Then, for every $T>0$, there exists a positive number $k$ such that the stopping time $\tau_1:=\inf\{t:H(X_t^\e)=k\e^\beta\}$ satisfies ${\bm{\mathrm P}}(\tau_1<T)\geq\frac{1}{2}$ for all $\e$ small enough.
\label{firststep1}
\end{lemma}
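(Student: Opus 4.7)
The plan is to exploit the fact that at a nondegenerate local minimum $\bm{\mathrm x}_0$, the coefficient $AH(\bm{\mathrm x}_0) = \tfrac12\operatorname{tr}\bigl(\sigma\sigma^*(\bm{\mathrm x}_0)\nabla^2 H(\bm{\mathrm x}_0)\bigr)$ is strictly positive, since both $\sigma\sigma^*$ and $\nabla^2 H$ are positive definite there by Assumptions 3 and 5. By continuity there exist a neighborhood $U$ of $\bm{\mathrm x}_0$ and constants $a_0, c_1, C_1>0$ such that $AH(x)\geq a_0$, $H(x)\geq c_1|x|^2$, and $|\nabla H(x)^*\sigma(x)|^2\leq C_1 H(x)$ on $U$. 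For any fixed $k>0$ and all sufficiently small $\e$, the connected component of $\{H\leq k\e^\beta\}$ containing $\bm{\mathrm x}_0$ lies in $U$, so by path continuity $X_t^\e$ stays in $U$ for every $t\leq\tau_1$.

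The first step is to apply the It\^o representation \eqref{eq:4}, writing $H(X_t^\e)=\e^\beta\int_0^t AH(X_s^\e)\,ds + M_t$, where $M_t:=\e^{\beta/2}\int_0^t \nabla H(X_s^\e)^*\sigma(X_s^\e)\,dW_s$ is a continuous martingale. Set $\tau:=\tau_1\wedge T$. On the event $\{\tau_1>T\}$ we have $\tau=T$ and $X_s^\e\in U$ for every $s\in[0,T]$, whence
$$k\e^\beta \ \geq\ H(X_T^\e)\ \geq\ a_0T\e^\beta + M_T,$$
so $M_T\leq(k-a_0T)\e^\beta$. Choosing $k<a_0T$ gives the inclusion $\{\tau_1>T\}\subset\bigl\{M_\tau\leq -(a_0T-k)\e^\beta\bigr\}$.

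Next I would control $M_\tau$ via its stopped quadratic variation. For $s\leq\tau$ the bound $|\nabla H(X_s^\e)^*\sigma(X_s^\e)|^2\leq C_1 H(X_s^\e)\leq C_1 k\e^\beta$ yields $\langle M\rangle_\tau\leq C_1 k T\e^{2\beta}$, and Doob's $L^2$ identity for the bounded stopping time $\tau$ gives $\mathbf{E}[M_\tau^2]=\mathbf{E}[\langle M\rangle_\tau]\leq C_1 kT\e^{2\beta}$. Chebyshev's inequality then yields
$$\bm{\mathrm P}(\tau_1>T)\ \leq\ \bm{\mathrm P}\bigl(M_\tau\leq -(a_0T-k)\e^\beta\bigr)\ \leq\ \frac{C_1 kT}{(a_0T-k)^2}.$$
The right-hand side is independent of $\e$ and tends to $0$ as $k\downarrow 0$, so I would fix $k>0$ small enough that this ratio is at most $\tfrac12$; then $\bm{\mathrm P}(\tau_1<T)\geq \tfrac12$ for all sufficiently small $\e$.

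The only point that needs genuine care is the localization: one must verify that before $\tau_1$ the trajectory stays inside the good neighborhood $U$ so that all three pointwise bounds apply. This follows from continuity of paths together with the observation that, by Assumption 4 and the non-degeneracy of the minimum, the connected component of $\{H\leq k\e^\beta\}$ containing $\bm{\mathrm x}_0$ shrinks to $\{\bm{\mathrm x}_0\}$ as $\e\to 0$. I expect this localization, rather than the martingale estimate, to be the only subtle step; everything else is a direct second-moment computation and the freedom in choosing the single constant $k$.
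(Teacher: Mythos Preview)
Your proof is correct and follows the same overall structure as the paper's: both exploit the strict positivity of $AH$ at the nondegenerate minimum to produce a drift of order $\e^\beta$ in $H(X_t^\e)$, localize to a shrinking neighborhood where $|\nabla H^*\sigma|^2\lesssim H\leq k\e^\beta$, and conclude that on $\{\tau_1>T\}$ the martingale part must be $\leq -c\e^\beta$ while its quadratic variation is only $O(k\e^{2\beta})$. The sole difference is in the final tail estimate: the paper represents the stochastic integral as a time-changed Brownian motion and applies the reflection principle to get a Gaussian bound, whereas you use Chebyshev's inequality on $\mathbf{E}[M_\tau^2]=\mathbf{E}[\langle M\rangle_\tau]$. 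Your route is slightly more elementary and perfectly adequate here, since only the crude bound $\tfrac12$ is required and no exponential decay is needed.
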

\begin{proof}
 Let us again write down the equation for $H(X_t^\e)$,$$H(X_t^\e)=\e^\beta\int_0^t AH(X_s^\e)ds+\e^{\beta/2}\int_0^t\rcoe dW_s,$$where $A$ is the operator $Au(x)=\frac{1}{2}\sum_{i,j}[\sigma(x)\sigma^*(x)]_{i,j}\cdot\frac{\partial^2}{\partial x_i\partial x_j}u(x)$. 
 First, note that both $\sigma\sigma^*$ and the Hessian matrix of $H$ are positive-definite at $\bm{\mathrm x}_0$, hence $d:=AH(\bm{\mathrm x}_0)/2>0$ and $AH(x)>d$ in a sufficiently small neighborhood of $\bm{\mathrm x}_0$. 
 Second, let $\lambda>0$ be the smaller eigenvalue of the Hessian matrix of $H$ at $\bm{\mathrm x}_0$.
 Then, by Taylor's expansion at $\bm{\mathrm x}_0$, we deduce that $|H(x)|\geq\frac{1}{4}\lambda|x|^2$ in a sufficiently small neighborhood of $\bm{\mathrm x}_0$. 
 Third, due to the boundedness of both the second derivatives of $H$ and the norm of $\sigma\sigma^*$, we have $|\nabla H^*(x)\sigma(x)|^2\leq\Lambda|x|^2$ for some $\Lambda$ and all $x\in\mathbb R^2$.
 Let $k$ be small enough so that $k\leq d/2$,
 $$\sqrt{\frac{32k\Lambda T}{\pi\lambda d^2}}\exp(-\frac{\lambda d^2}{32k\Lambda T})<\frac{1}{2},$$
 and, in $U=\{x:|x|\leq\sqrt\frac{4k}{\lambda}\e^{\beta/2}\}$, we have $AH(x)>d$ and $|H(x)|\geq\frac{1}{4}\lambda|x|^2$. We define $\tau_2=\inf\{t:H(X_t^\e)\in\partial U\}$, and it follows that $\tau_1\leq\tau_2$. We are ready to estimate
 \begin{align*}
     {\bm{\mathrm P}}(\tau_1\geq T)&= {\bm{\mathrm P}}\left(\tau_2\geq\tau_1\geq T,\ \e^\beta\int_0^T AH(X_s^\e)ds+\e^{\beta/2}\int_0^T\rcoe dW_s<k\e^\beta\right)\\
     &\leq {\bm{\mathrm P}}\left(\tau_2\geq\tau_1\geq T,\ \e^{\beta/2}\int_0^T\rcoe dW_s<(k-d)\e^\beta\right)\\
     &\leq {\bm{\mathrm P}}\left(\tau_2\geq\tau_1\geq T,\ \int_0^T\rcoe dW_s<-\frac{d\e^{\beta/2}}{2}\right)\\
     &={\bm{\mathrm P}}\left(\tau_2\geq\tau_1\geq T,\ \Tilde W(\int_0^T|\rcoe|^2ds)<-\frac{d\e^{\beta/2}}{2}\right)\\
     &\leq {\bm{\mathrm P}}\left(\tau_2\geq\tau_1\geq T,\ \inf_{0\leq t\leq 4k\Lambda T\e^\beta/\lambda} \Tilde W_t<-\frac{d\e^{\beta/2}}{2}\right)\\
     &\leq 2{\bm{\mathrm P}}\left(\Tilde W_{4k\Lambda T\e^\beta/\lambda}>\frac{d\e^{\beta/2}}{2}\right)\\
     &\leq \sqrt{\frac{32k\Lambda T}{\pi\lambda d^2}}\exp(-\frac{\lambda d^2}{32k\Lambda T})\\
     &<\frac{1}{2}.
 \end{align*}
\end{proof}
Assuming that the process starts at $x$ with $H(x)=k\e^\beta$, i.e. a certain distance away from the extremum point, we can apply Ito's formula with $f(x)=\sqrt x$ to the process $H(X_t^\e)$ in order to make the diffusion coefficient uniformly positive. 

\begin{lemma}
For a given non-constant $\varphi$ with $\varphi_0=H(x_0)=0$ and $S(\varphi)<\infty$ and a positive constant $c$, there exists $\rho>0$ such that the stopping time $\tau:=\inf\{t:H(X_t^\e)=\rho\}$ satisfies ${\bm{\mathrm P}}(\tau<T)\geq\exp(-c\e^{-\beta})$ for all $\e$ sufficiently small, where $T=\inf\{t:|\varphi_t-\varphi_0|=\rho\}$.
\label{extreme}
\end{lemma}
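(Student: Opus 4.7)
The plan is to first reach level $H=k\e^\beta$ via \lemref{firststep1} and then propagate to $H=\rho$ through a Brownian-motion comparison after the substitution $\eta=\sqrt{H}$ regularizes the degenerate diffusion coefficient at the extremum. Pick $T_1\in(0,T)$; by \lemref{firststep1} there exists $k>0$ with ${\bm{\mathrm P}}(\tau_1<T_1)\geq 1/2$, where $\tau_1:=\inf\{t:H(X_t^\e)=k\e^\beta\}$. The strong Markov property gives
\[{\bm{\mathrm P}}(\tau<T)\geq\tfrac{1}{2}\inf_{y:\,H(y)=k\e^\beta}{\bm{\mathrm P}}_y(\tau<T-T_1),\]
so it suffices to lower-bound the inner infimum. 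Note that \lemref{lowerbound} cannot be applied to any target trajectory emanating from $\bm{\mathrm x}_0$, because $B^2(H)\to 0$ at the extremum makes the associated action functional infinite along such a path.

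To circumvent the degeneracy, apply Ito's formula with $f(y)=\sqrt{y}$ and set $\eta_t:=\sqrt{H(X_t^\e)}$; then
\[d\eta_t=\frac{\e^\beta}{2\eta_t}\Bigl(AH(X_t^\e)-\frac{|\nabla H^*\sigma(X_t^\e)|^2}{4\eta_t^2}\Bigr)dt+\frac{\e^{\beta/2}}{2\eta_t}\nabla H^*\sigma(X_t^\e)\,dW_t.\]
By non-degeneracy of the Hessian of $H$ at $\bm{\mathrm x}_0$ and uniform positivity of $\sigma\sigma^*$, Taylor expansion produces positive constants $\kappa_1,\kappa_2$ and a neighborhood $\mathcal U\ni\bm{\mathrm x}_0$ on which $\kappa_1|x|^2\leq H(x)\leq\kappa_2|x|^2$ and $\kappa_1|x|^2\leq|\nabla H^*\sigma(x)|^2\leq\kappa_2|x|^2$. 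Taking $\rho$ small enough that $\{H\leq 2\rho\}\subset\mathcal U$, the diffusion coefficient of $\eta_t$ lies in $[c_1\e^{\beta/2},c_2\e^{\beta/2}]$, while its drift is bounded by $C\e^\beta/\eta_t$. On the event $G=\{\eta_s\geq\sqrt{k\e^\beta}/2\text{ for all }s\leq T-T_1\}$ the accumulated drift is $O(\e^{\beta/2})$, negligible beside $\sqrt{\rho}$; by Dubins-Schwarz the martingale part of $\eta$ equals $B_{\langle\,\cdot\,\rangle}$ for a standard Brownian motion $B$, with $\langle\,\cdot\,\rangle_{T-T_1}\geq c_1^2\e^\beta(T-T_1)$ on $G$. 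A reflection-principle estimate, combined with the fact that the downward escape from $G$ is subdominant on the time scale $\e^\beta T$, then yields
\[\inf_{y:\,H(y)=k\e^\beta}{\bm{\mathrm P}}_y(\tau<T-T_1)\geq\exp\Bigl(-\frac{\rho}{c_0T\e^\beta}\Bigr)\]
for some $c_0>0$ and all sufficiently small $\e$.

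It remains to select $\rho$ so that $\rho/T\leq c_0c/2$. Since $\varphi_0=0$, $\varphi$ is absolutely continuous, and $B^2(H)\leq CH$ on $[0,2\rho]$ by the Taylor bound, the Cauchy-Schwarz inequality yields
\[\rho=|\varphi_T|\leq\int_0^T|\dot\varphi_t|\,dt\leq\Bigl(\int_0^T\frac{|\dot\varphi_t|^2}{B^2(\varphi_t)}\,dt\Bigr)^{1/2}\Bigl(\int_0^TB^2(\varphi_t)\,dt\Bigr)^{1/2}\leq\sqrt{C\rho T\cdot\int_0^T\frac{|\dot\varphi_t|^2}{B^2(\varphi_t)}\,dt}\,,\]
so $\rho/T\leq C\int_0^T|\dot\varphi_t|^2/B^2(\varphi_t)\,dt$, which tends to $0$ as $T\downarrow 0$ by absolute continuity of the Lebesgue integral (using $S(\varphi)<\infty$). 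Since $T\downarrow 0$ as $\rho\downarrow 0$ by continuity of $\varphi$ at $0$, shrinking $\rho$ delivers the required smallness of $\rho/T$, and combining with the factor $1/2$ from \lemref{firststep1} completes the proof. The principal obstacle is the degeneracy $B^2\sim H$ at the extremum, which invalidates the direct approach of \lemref{lowerbound}; the $\sqrt{\cdot}$ transform removes it at the cost of introducing a drift of the same order $\e^{\beta/2}$ as the diffusion, but this drift's cumulative contribution over the short interval $[0,T]$ is subleading. The uniform ellipticity of the transformed diffusion relies crucially on the non-degeneracy of both the Hessian of $H$ at $\bm{\mathrm x}_0$ and of $\sigma\sigma^*$.
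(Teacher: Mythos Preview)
Your overall architecture matches the paper's: reach $H=k\e^\beta$ via \lemref{firststep1}, apply the $\sqrt{\cdot}$ transform, run a Brownian-motion estimate, and finally shrink $\rho$ using $\rho/T\to 0$ (your Cauchy--Schwarz step is exactly the content of \lemref{zeroderivative}). The gap is in how you handle the drift of $\eta_t=\sqrt{H(X_t^\e)}$.

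You bound the drift in absolute value by $C\e^\beta/\eta_t$ and then restrict to $G=\{\eta_s\geq\sqrt{k\e^\beta}/2\text{ for all }s\leq T-T_1\}$, claiming the accumulated drift is $O(\e^{\beta/2})$ and that ``downward escape from $G$ is subdominant''. Neither claim does the work you need. First, the accumulated drift $O(\e^{\beta/2}(T-T_1))$ is of the \emph{same} order as the barrier $\sqrt{k\e^\beta}/2$ defining $G$, so the drift by itself can push the process out of $G$; it is not negligible at the scale that matters for staying in $G$. Second, the martingale part $B_{\langle\cdot\rangle}$ has standard deviation of order $\e^{\beta/2}\sqrt{T}$ on the horizon $\e^\beta T$, again comparable to the barrier, so the probability of downward exit is of order one, not subdominant. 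Without further argument you cannot conclude that ${\bm{\mathrm P}}(G\cap\{\tau<T-T_1\})$ captures a sufficient fraction of the Brownian tail.

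The paper avoids this entirely by proving that the drift of $\eta_t$ is \emph{positive} near $\bm{\mathrm x}_0$ (\lemref{positivedrift}): a Taylor expansion shows $4H(x)\,AH(x)-|\nabla H^*(x)\sigma(x)|^2>0$ in a small ball, using the non-degeneracy of both the Hessian and $\sigma\sigma^*$. With positive drift one has unconditionally $\eta_{T/2}\geq\eta_0+\text{(martingale)}_{T/2}$ while $t<\tau$, so $\{\tau\geq T/2\}\subset\{\text{(martingale)}_{T/2}\leq\sqrt{\rho}\}$, and the latter is a direct one-sided Brownian estimate with quadratic variation between two multiples of $T$. No restriction to $G$ and no control of downward excursions is needed. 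If you want to rescue your route without positivity, you would need a genuine stopping-time bootstrap showing that on a suitable Brownian event the process never hits the lower barrier---possible, but considerably more delicate than what you have written.
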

\begin{proof}
 
 Let $\tau_1=\inf\{t:H(X_t^\e)=k\e^\beta\}$ and let $\tau_2=\tau-\tau_1$. By the strong Markov property of $X_t^\e$, we deduce that 
 $${\bm{\mathrm P}}(\tau<T)\geq {\bm{\mathrm P}}(\tau_1<\frac{T}{2},\ \tau-\tau_1<\frac{T}{2}).$$
 By Lemma \ref{firststep1}, we can find such $k$ that $${\bm{\mathrm P}}(\tau_1<\frac{T}{2})\geq\frac{1}{2}.$$
 Consider the process that starts at $x$ satisfying $H(x)=k\e^\beta$. Since a two-dimensional nondegenerate diffusion does not reach the origin, we can apply Ito's formula to $\sqrt{H(X_t^\e)}$ and get
\begin{align*}
    \sqrt{H( X_t^\e)}=\sqrt{k\e^\beta}&+\e^\beta\int_0^t\left(\frac{AH( X_s^\e)}{2\sqrt{H( X_s^\e)}}-\frac{|\rcoe|^2}{8\sqrt{H( X_s^\e)^3}}\right)ds\\ &+\e^{\beta/2}\int_0^t\frac{\rcoe}{2\sqrt{H( X_s^\e)}}dW_s.
\end{align*}  
Due to the positive-definiteness of the Hessian matrix at $\bm{\mathrm x}_0=(0,0)$, we can find $r>0$ small enough that satisfies the conditions in Lemma \ref{positivedrift}, and there exist $\lambda_1,\lambda_2,\Lambda_1,\Lambda_2>0$ independent of $r$ such that, for every $x\in B(\bm{\mathrm x}_0,r)$, we have the estimates: $\lambda_1 |x|^2\leq H(x)\leq\lambda_2 |x|^2$ and $\Lambda_1|x|\leq|\nabla H(x)|\leq\Lambda_2 |x|$. Let $\rho$ be sufficiently small so that $X_t^\e\in B(\bm{\mathrm x}_0,r)$ for $t<\tau_2$. By Lemma \ref{positivedrift},
 \begin{align*}
     &\ {\bm{\mathrm P}}\left(\tau_2\geq T/2\right)\\ =&\ {\bm{\mathrm P}}\left(\sqrt{H( X^\e_{T/2})}\leq\sqrt\rho,\ \tau_2\geq T/2\right)\\
     \leq&\ {\bm{\mathrm P}}\left(\e^{\beta/2}\int_0^{T/2}\frac{ \rcoe}{2\sqrt{H( X_s^\e)}}dW_s\leq\sqrt\rho,\ \tau_2\geq T/2 \right)\\
     \leq&\ {\bm{\mathrm P}}\left(\Tilde W\left(\int_0^{T/2}\left\lvert\frac{ \rcoe}{2\sqrt{H( X_s^\e)}}\right\rvert^2ds\right)\leq\sqrt\rho\e^{-\beta/2},\ \tau_2\geq T/2 \right)\\
     \leq &\ {\bm{\mathrm P}}\left(\Tilde W_t\leq\sqrt\rho\e^{-\beta/2},\text{ for some }\lambda T<t<2\lambda T,\ \tau_2\geq T/2 \right)\\
     \leq&\ {\bm{\mathrm P}}\left(\Tilde W_t\leq\sqrt\rho\e^{-\beta/2},\text{ for some }\lambda T<t<2\lambda T\right).
 \end{align*}
 Here $\lambda$ is a constant determined by $\lambda_1,\lambda_2,\Lambda_1,\Lambda_2$. Hence 
  \begin{align*}
     {\bm{\mathrm P}}\left(\tau_2\leq T/2\right)\geq&\ {\bm{\mathrm P}}\left(W_t>\sqrt\rho\e^{-\beta/2},\text{ for all }\lambda T<t<2\lambda T\right)\\
     \geq&\ \frac{1}{2}{\bm{\mathrm P}}\left(W_{\lambda T}>\sqrt{2\rho}\e^{-\beta/2}\right)\\
     \geq&\ \frac{1}{\sqrt{2\pi\lambda T}}\exp(-\frac{3\rho}{\lambda T\e^\beta}).
 \end{align*}
 By Lemma \ref{zeroderivative}, we can find $\rho$ such that the value above is greater than $2\exp(-c\e^{-\beta})$.
\end{proof}

\subsection{Estimates near the Separatrix}
In this section, we prove that the probability that the process goes through an interior vertex sufficiently fast is not too small. For that purpose, we invoke the Morse-Palais Lemma to make concrete computations. Suppose that $\bm{\mathrm x}_k$ is a nondegenerate saddle point of $H$ and that $H(\bm{\mathrm x}_k)=0$. By the Morse-Palais Lemma, there exists a neighborhood $U_0$ (we may assume that $U_0$ is an $8l\times4l$ rectangle with $l<1$) of the origin and a diffeomorphism $\psi:U_0\to V_0$, where $V_0$ is a neighbourhood of $\bm{\mathrm x}_k$, such that $H(\psi(\mu,\nu))=\mu^2-\nu^2=:G(\mu,\nu)$, if $(\mu,\nu)\in U_0$. Since $\psi$ is a diffeomorphism, there exists $\overline M$ such that $\|\psi\|$, $\|\psi^{-1}\|$, $\|J_\psi\|$, $\|J_{\psi^{-1}}\|$, $|\mathrm {det}(J_\psi)|$, $|\nabla \mathrm {det}(J_\psi)|$ are all bounded by $\overline M$, for all $(\mu,\nu)\in U_0$. 
Let us take $U\subset U_0$ to be the $4l\times 2l$ rectangle centered at the origin, and $V=\psi(U)$. Look at the deterministic slow motion
$$dx_t=\nabla^\perp H(x_t)dt.$$
Then we derive the equation for the process $\psi^{-1}(x_t)$ in $U_0$:
$$\frac{d\psi^{-1}(x_t)}{dt}=\frac{1}{\mathrm {det}(J_\psi(\psi^{-1}(x_t)))}\cdot\nabla^\perp G(\psi^{-1}(x_t)).$$
This means that $G(\psi^{-1}(x_t))$ remains constant as the process evolves, which leads to the fact that if $x_0$ is close to $\bm{\mathrm x}_k$ and $H(x_0)>0$, then the deterministic motion $\psi^{-1}(x_t) $ will exit from $U$ across the upper or lower boundary, depending on which side of separatrix $x_0$ is on and the sign of $\mathrm{det} (J_\psi)$. We define, for $(\mu,\nu)\in U_0$, 
\begin{equation}
\label{eq:Tmunu}
    T(\mu,\nu)=\frac{1}{2}\int_{\nu}^l\frac{\mathrm {det}(J_\psi(\sqrt{y^2+\mu^2-\nu^2},y))}{\sqrt{y^2+\mu^2-\nu^2}}dy.
\end{equation}
Without loss of generality, we assume $\mathrm{det} (J_\psi)>0$.
Then it's not hard to see that $T(\mu,\nu)=\inf\{t:\psi^{-1}(x_t)\not\in U,\ \psi^{-1}(x_0)=(\mu,\nu)\}$ if $(\mu,\nu)\in U$, $\mu>0$, and $0<H(\psi((\mu,\nu)))<3l^2$
(in this case, the process $\psi^{-1}(x_t)$ exits from $U$ across the upper part of the boundary, and the integrand in \eqref{eq:Tmunu} (multiplied by a half) is the reciprocal of the vertical speed); and that $T(\psi^{-1}(x_t))-T(\psi^{-1}(x_{t+1}))=1$ if $x_t,\ x_{t+1}\in V_0$ and $0<H(x_t)<3l^2$; and for $x_0\in V$ and $0<H(x_0)<3l^2$, 
\begin{equation}
\label{eq:T}
    T(\psi^{-1}(x_0))\leq\overline{M}\left[\log\left(l+\sqrt{l^2+H(x_0)}\right)-\frac{1}{2}\log(H(x_0))\right].
\end{equation}
Furthermore, differentiating $T(\mu,\nu)$, we get the following estimates:
\begin{align}
    |\frac{\partial T}{\partial \mu}|&\leq \overline M\left[\frac{|\pi\mu|}{\sqrt{\mu^2-\nu^2}}+\frac{|\mu|}{(\mu^2-\nu^2)\sqrt{1+\mu^2-\nu^2}}+\frac{|\nu|}{\mu^2-\nu^2}\right]\leq\frac{C}{H(\psi(\mu,\nu))},\label{derv1}\\
    |\frac{\partial T}{\partial \nu}|&\leq \overline M\left[\frac{|\pi\nu|}{\sqrt{\mu^2-\nu^2}}+\frac{|\nu|}{(\mu^2-\nu^2)\sqrt{1+\mu^2-\nu^2}}+\frac{|\mu|}{\mu^2-\nu^2}\right]\leq\frac{C}{H(\psi(\mu,\nu))},\label{derv2}
\end{align}for a certain positive constant $C$ (details are given in the Appendix, Part 4).
We will go through the following steps to get the desired estimates on the probability of going through an interior vertex sufficiently fast:
\begin{enumerate}
\item We first deal with the random process that starts on the separatrix. Then, by Theorem 3.2 in \cite{Hairer2016AFK}, for any $d>0$, the process escapes to $C_{ki}(\e^{\frac{1+d}{2}\beta})$ in a fixed time with a probability bounded from below independently of $\e$.
\item If $d>0$ is small enough, then the process that starts at $C_{ki}(\e^{\frac{1+d}{2}\beta})$ can reach $C_{ki}(\e^{\frac{1-d}{2}\beta})$ with at least an exponentially small probability, as is stated in Lemma~\ref{firststep}.
\item After the process arrives at $C_{ki}(\e^{a\beta})$ with $a<1/2$, it can further travel to $C_{ki}(\e^{(a-d)\beta})$ with $d$ small but positive, with at least an exponentially small probability. This is the statement of Lemma~\ref{steps}. Hence, after finitely many steps, the process can reach $C_{ki}(\e^{d\beta})$.
\item The last step would be traveling from $C_{ki}(\e^{d\beta})$ to a constant distance from the separatrix. This is done in Lemma~\ref{laststep}.
\item Similarly, the process that starts at a certain distance from the separatrix can hit it within a constant time with at least an exponentially small probability. Therefore, the probability that the process follows a certain trajectory going across the separatrix is not terribly small, which is the result of Lemma~\ref{separatrix3}.
\end{enumerate}
Let us define the curves:
\begin{align*}
    \gamma_{in}&=\{x\in\partial V\cap D_i:  \nabla^\perp H(x)\text{ points inwards of }V\},\\
    \gamma_{out}&=\{x\in\partial V\cap D_i:  \nabla^\perp H(x)\text{ points outwards of }V\}.
\end{align*}
The following lemma contains simple estimates on the Brownian motion that will be needed later. The proof will be given in the Appendix, Part 4.
\begin{lemma}
\label{brownianatsaddle}
(\romannumeral1) For $0<d<a<\frac{1}{2}$ and every $\kappa>0$, we have the following estimate for $\e$ small enough:
\begin{align*}
    &\ {\bm{\mathrm P}}\left(\e^{\beta/2}{W}_{T}<-4\e^{(a-d)\beta}(a-d)\beta|\log\e|,\ \e^{\beta/2} W_t<\frac{1}{4}a\beta|\log\e|\e^{a\beta},\ \forall\ 0<t<T\right)\\
    \geq &\ 2\exp\left(-\e^{-(1-2(a-d))\beta-\kappa}\right).
\end{align*}
(\romannumeral2) For $0<d<\frac{1/\beta-1}{3}\wedge\frac{1}{2}$ and every $\kappa>0$, we have the following estimate for $\e$ small enough:
$${\bm{\mathrm P}}\begin{pmatrix}
\e^{\beta/2}{W}_T<-2(1-d)\beta\e^{\frac{1-d}{2}\beta}|\log\e|;\\ \e^{\beta/2}{W}_t<\e^{\frac{1+d}{2}\beta},\ \forall\ 0<t<\e^{d\beta};\\ \e^{\beta/2}{W}_t<-\e^{\frac{1-d}{2}\beta},\ \forall\ \e^{d\beta}<t<T
\end{pmatrix}
\geq 2\exp(-\e^{-2d\beta-\kappa}).$$
(\romannumeral3) For $0<d<\frac{1/\beta-1}{2}$ and every $A>0$, we have the following estimate for $\e$ small enough:
$${\bm{\mathrm P}}\left(\e^{\beta/2}{W}_{T}<-A,\ \e^{\beta/2} W_t<\frac{1}{4}d\beta|\log\e|\e^{d\beta},\ \forall\ 0<t<T\right)\geq 2\exp\left(-\frac{A^2}{T}\e^{-\beta}\right).$$
\end{lemma}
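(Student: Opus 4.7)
My plan to prove Lemma~\ref{brownianatsaddle} treats all three parts on the same footing: each is a lower bound of the form $P(W_T<-u,\ \sup_{0\le t\le T}W_t<v)\ge 2\exp(-\lambda_\e)$ for specific $\e$-dependent thresholds $u,v>0$ and target exponent $\lambda_\e$. The two ingredients are the joint reflection density
\begin{equation*}
P(W_T\in ds,\ \sup_{0\le t\le T}W_t<v) \;=\; \frac{1}{\sqrt{2\pi T}}\bigl[\exp(-s^2/(2T))-\exp(-(2v-s)^2/(2T))\bigr]\,ds\quad(s<v),
\end{equation*}
and the Mills-ratio tail bound $P(W_T<-u)\ge c u^{-1}\sqrt{T}\exp(-u^2/(2T))$ for an absolute constant $c>0$, valid once $u^2/T$ is large. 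Rewriting the bracket as $\exp(-s^2/(2T))[1-\exp(-2v(v-s)/T)]$, the first step is to substitute the particular thresholds and check that $2v(v-s)/T\to\infty$ uniformly in $s\le -u$. That makes the bracket at least $1/2$ on the region of integration, whence
\begin{equation*}
P(W_T<-u,\ \sup W_t<v)\;\ge\;\tfrac12 P(W_T<-u)\;\ge\;\tfrac{c}{2u}\sqrt{T}\exp(-u^2/(2T)).
\end{equation*}

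For parts (i) and (iii) this template applies directly. In (iii) one has $u=A\e^{-\beta/2}$ and $u^2/(2T)=A^2\e^{-\beta}/(2T)$, whereas the target is $\lambda_\e=A^2\e^{-\beta}/T$; the extra factor of $2$ in $\lambda_\e$ absorbs both the leading constant $2$ in the statement and the $\e^{\beta/2}$ prefactor $1/u$. The hypothesis $d<(1/\beta-1)/2$ enters only via the check $2v(v+u)/T\to\infty$, which is automatic because $u$ grows polynomially in $1/\e$. In (i), $u=4(a-d)\beta|\log\e|\e^{(a-d)\beta-\beta/2}$ gives $u^2/(2T)$ of order $|\log\e|^2\e^{-(1-2(a-d))\beta}$, while $\lambda_\e=\e^{-(1-2(a-d))\beta-\kappa}$. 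Since $|\log\e|^2=o(\e^{-\kappa})$ as $\e\to 0$ for every $\kappa>0$, one gets $u^2/(2T)\le\tfrac12\lambda_\e$ for $\e$ small, and once again the polynomial factor and the leading $2$ are absorbed.

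Part (ii) has a two-phase structure, so I would split at $t_0:=\e^{d\beta}$ and use the strong Markov property. Fix $\theta\in(0,1)$ and condition on $W_{t_0}$ lying in $J:=(-(1+2\theta)\e^{-d\beta/2},\,-(1+\theta)\e^{-d\beta/2})$. On the first phase $\{\sup_{t\le t_0}W_t<\e^{d\beta/2},\ W_{t_0}\in J\}$, the reflection density integrated over $s\in J$ gives a lower bound of a polynomial prefactor times $\exp(-(1+\theta)^2\e^{-2d\beta}/2)$, which is the dominant exponential cost. Conditionally on $W_{t_0}=\xi\in J$, the shifted process $W'_s:=W_{t_0+s}-\xi$ must satisfy a single-phase event with upper barrier $-\e^{-d\beta/2}-\xi\ge\theta\e^{-d\beta/2}>0$ and endpoint $\sim -|\log\e|\e^{-d\beta/2}$. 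The template from the first paragraph then yields a conditional probability at least $\exp(-C|\log\e|^2\e^{-d\beta}/T)$, which is $o(\e^{-2d\beta-\kappa})$ and therefore strictly subleading. The product of the two bounds exceeds $2\exp(-\e^{-2d\beta-\kappa})$ for $\e$ small, as required.

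The main obstacle is not any single hard step but careful tracking of the prefactors: polynomial and $|\log\e|^p$ factors must each be shown to be dominated by the $\e^{-\kappa}$ slack built into the target exponent. The only genuinely delicate choice is the conditioning interval $J$ in part (ii); it has to be bounded away from $-\e^{-d\beta/2}$ by a constant fraction (so that the translated upper barrier in the second phase is a true positive distance from $0$) and at the same time it has to be narrow on the scale of $\e^{-d\beta/2}$ (so that the first-phase Gaussian mass concentrates where we want it). Any fixed $\theta\in(0,1)$ independent of $\e$ meets both constraints simultaneously.
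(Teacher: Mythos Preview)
Your proposal is correct and uses essentially the same idea as the paper: both arguments rest on the reflection principle for Brownian motion with a one-sided barrier. The paper proves only part~(i) explicitly and does so by a ``subtract the bad set'' bookkeeping---it lower-bounds $\bm{\mathrm P}(\e^{\beta/2}W_T\in I)$ for a short interval $I$ below the threshold, upper-bounds $\bm{\mathrm P}(\e^{\beta/2}W_T<-u,\ \sup_t\e^{\beta/2}W_t\ge v)$ via reflection, and shows the ratio tends to $0$---whereas you integrate the joint density $[\exp(-s^2/2T)-\exp(-(2v-s)^2/2T)]$ directly and use Mills' ratio on the unconstrained tail. These are two packagings of the same computation; yours is arguably cleaner since it avoids choosing the auxiliary interval. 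For part~(ii) the paper gives no details beyond ``proved similarly''; your two-phase Markov splitting at $t_0=\e^{d\beta}$ with the conditioning window $J$ is the natural way to handle the piecewise barrier, and your exponent bookkeeping is correct (the first-phase cost $\sim C\e^{-2d\beta}$ and second-phase cost $\sim C'|\log\e|^2\e^{-d\beta}$ are each $o(\e^{-2d\beta-\kappa})$). One small caveat: in part~(iii) your claim that $2v(v+u)/T\to\infty$ is ``automatic'' tacitly uses $d<1$ so that $v\not\to0$ too fast; the paper only ever applies~(iii) with $d<1$ (see Lemma~\ref{laststep}), so this is harmless, but if $d\ge1$ one would instead bound the probability by the length-$2v$ interval estimate $P(-2v-u<W_T<-u)$, which still suffices.
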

As we've already seen from \eqref{eq:T}, the exit time for the deterministic process from the neighborhood of a saddle point is approximately $\lvert\log H\rvert$. Now we need a similar statement for $X_t^\e$.
\begin{lemma}
\label{escapetime}
Suppose that $O_k$ is an interior vertex and an endpoint of $I_i$. For any given $0<a<\frac{1}{2}$, $0<d<\frac{1/\beta-1}{2}\wedge a$, and for every $0<\kappa<\frac{1}{8}(1-(1+2d)\beta)$, every $T>0$, we have the following estimate for each $x_0\in\gamma_{out}\cap D_{i}(\frac{9}{10}\e^{a\beta},\frac{11}{10}\e^{a\beta})$ and $\e$ small enough
$${\bm{\mathrm P}}\left(\int_0^t|\log H(X_s^\e)\rcoe|^2ds>t,\ \forall\ t\leq\eta\wedge T\right)\geq 1-\exp\left(-\e^{-(1-2(a-d))\beta-\kappa}\right),$$where $\eta=\inf\{t:X_t^\e\in C_{ki}(\e^{(a-d)\beta})\cup C_{ki}(\e^{a\beta}/2)\}.$
\end{lemma}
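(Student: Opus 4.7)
The idea is to combine a pointwise lower bound on $|\log H(X_s^\e)|$ with an averaging estimate on $|\rcoe|^2$ along the fast-rotating level curves. On the event $\{s \le \eta\}$, by definition of $\eta$ we have $H(X_s^\e) \in [\e^{a\beta}/2, \e^{(a-d)\beta}]$, so $|\log H(X_s^\e)| \ge (a-d)\beta |\log\e|$. Hence it suffices to show that, with probability at least $1-\exp(-\e^{-(1-2(a-d))\beta-\kappa})$, one has $\int_0^t |\rcoe|^2\,ds \gtrsim t/|\log\e|$ for all $t \le \eta\wedge T$; multiplying by the square of the log-bound then yields an integral of order $|\log\e|\cdot t \gg t$.

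For the averaging step, I would partition $[0,\eta\wedge T]$ into sub-intervals of length one rotation time $T_k^\e := \e^{1-\beta} T_i(H(X^\e_{t_k}))$, and, on each sub-interval, compare $X_s^\e$ with the deterministic Hamiltonian flow $\xi_s^\e$ via Gronwall's inequality, as in Step II of the proof of Lemma~\ref{loclem}. Using Lipschitz continuity of $T_iB_i^2$ from Lemma~\ref{lipschitz}, this replaces $\int |\rcoe|^2\,ds$ on each sub-interval by $T_k^\e B_i^2(H(X^\e_{t_k}))$. The crucial structural input is the uniform lower bound $T_i(H)B_i^2(H) \ge c > 0$ as $H \downarrow 0$, which I would verify in Morse coordinates: uniform positive-definiteness of $\sigma\sigma^*$ gives $T_iB_i^2 \ge c_1 \oint_{C_i(H)} |\nabla H|\,dl$, and the contribution from $C_i(H) \cap V$, written in $(\mu,\nu)$-coordinates with $H = \mu^2 - \nu^2$, reduces to an elementary integral that stays bounded below as $H \downarrow 0$. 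Summing over sub-intervals yields $\int_0^t |\rcoe|^2\,ds \gtrsim c t/T_i(H) \gtrsim c t/|\log\e|$.

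The exponential probability bound comes from a Doob-type estimate on the martingale $\e^{\beta/2}\int_0^\cdot \rcoe\,dW_s$ (the stochastic integral in the Ito expansion of $H(X_t^\e)$), analogous to the event $A_k$ in Lemma~\ref{loclem}. Its quadratic variation is $O(\e^\beta)$ on $[0,T]$, so the probability that its supremum exceeds a threshold $A$ is bounded by $\exp(-A^2 \e^{-\beta}/C)$; taking $A \sim \e^{(a-d)\beta}$ to match the band-width defining $\eta$ produces the exponent $(1-2(a-d))\beta$, with $\kappa$ absorbing polynomial corrections from the number of sub-intervals and from Lipschitz approximation errors.

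The main obstacle I expect is handling the initial partial rotation (times $t$ shorter than $T_0^\e$), during which averaging has not yet taken effect. Here I would exploit the hypothesis $x_0 \in \gamma_{out}$ to obtain that $x_0$ is bounded away from the saddle, so $|\rcoe(x_0)|^2 \ge c' > 0$; by continuity and a Gronwall comparison with $\xi_s^\e$, one has $|\rcoe(X_s^\e)|^2 \ge c'/2$ on a window of length at least $\e^{1-\beta}$, giving $\int_0^t |\rcoe|^2\,ds \gtrsim c' t$ directly without invoking averaging. The same observation handles the final partial rotation just before $\eta$, and together with the averaging argument on the intermediate sub-intervals produces the uniform-in-$t$ strict inequality claimed in the lemma.
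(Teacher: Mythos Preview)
Your averaging step has a genuine gap. You propose to partition $[0,\eta\wedge T]$ into rotation periods $T_k^\e = \e^{1-\beta}T_i(H(X^\e_{t_k}))$ and compare $X^\e_s$ with the deterministic flow $\xi^\e_s$ via Gronwall, as in Step~II of Lemma~\ref{loclem}. But near the separatrix $T_i(H)\sim \overline M|\log H|\sim \overline M a\beta|\log\e|$ by \eqref{eq:T}, so the Gronwall factor over one rotation is $\exp\bigl(\e^{\beta-1}L\cdot T_k^\e\bigr)=\exp\bigl(LT_i(H)\bigr)\sim \e^{-cL}$ for some $c>0$, which is polynomially large in $\e^{-1}$ and destroys the smallness of $|X^\e_s-\xi^\e_s|$. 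The proof of Lemma~\ref{loclem} works only because there the trajectory is bounded away from all vertices, so $T_i(H)$ stays bounded. Your invocation of Lemma~\ref{lipschitz} for Lipschitz continuity of $T_iB_i^2$ has the same defect: that lemma applies only on closed sub-intervals of $H(I_i)$, not up to the separatrix endpoint where $T_i$ blows up.

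The paper's argument does not use averaging at all. Instead it splits each rotation into the portion outside $V$ (from $\gamma_{out}$ to $\gamma_{in}$), where $|\rcoe|^2$ is bounded below pointwise and the traversal time is $O(1)$ in the slow scale, and the portion inside the Morse neighborhood $V$ (from $\gamma_{in}$ back to $\gamma_{out}$), where the integrand may vanish but the passage time is controlled. For the latter, the paper does Gronwall only over \emph{unit} slow-time intervals $[k,k+1]$---so the exponential factor is harmless---and uses the derivative bounds \eqref{derv1}--\eqref{derv2} on the exit-time function $T(\mu,\nu)$ to show $T_1(\tilde X^\e_{k+1})\le T_1(\tilde X^\e_k)-\tfrac12$ per step, hence the passage takes at most $\overline M|\log\e|$ slow-time units. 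The integral accumulated outside $V$, of order $|\log\e|^2$, then dominates the total time $\lesssim|\log\e|$, giving the strict inequality for every $t$ through the rotation; iterating via the strong Markov property over the $\lesssim\e^{\beta-1}$ rotations yields the exponent $(1-2(a-d))\beta+\kappa$. What you need to add is precisely this unit-step Gronwall combined with the Morse-coordinate control of $\nabla T$, in place of the rotation-length comparison.
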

The idea behind this lemma is that the process starts on $\gamma_{out}$ and moves away from the saddle point first, with the intervals of time away from and near the saddle point alternating repeatedly. The integrand is large when the process is away from the saddle point, which is sufficient to compensate for the relatively long time spent near the saddle point. A detailed proof will be given in the Appendix, Part 4. Now let us formulate the argument of Step 3 (above) in the following lemma.
\begin{lemma}
\label{steps}
Suppose that $O_k$ is an interior vertex and an endpoint of $I_i$. For any given $0<a<\frac{1}{2}$, $0<d<\frac{1/\beta-1}{2}\wedge a$, and for every $T>0$, every $c>0$, there exists $\e_0$ small enough such that, for each $x_0\in\gamma_{out}\cap D_{i}(\frac{9}{10}\e^{a\beta},\frac{11}{10}\e^{a\beta})$, we have $${\bm{\mathrm P}}(\tau\leq T)\geq\exp{(-c\e^{-\beta})},\ \forall\e<\e_0,$$where $\tau=\inf\{t:X_t^\e\in C_{ki}(\e^{(a-d)\beta})\}.$
\end{lemma}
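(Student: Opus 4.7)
The plan is to reduce the statement to a Brownian-motion large-deviation estimate via Ito's formula together with the Dambis--Dubins--Schwarz time change. The key choice is $F(y)=y\log y$: applying Ito to $F(H(X_t^\e))$ produces a martingale part $\Lambda_t$ whose quadratic variation is $\e^\beta\int_0^t(\log H+1)^2|\rcoe|^2\,ds$, an integrand that, for $H$ small, matches up to a factor $(1+O(1/|\log H|))^2$ the integrand controlled by Lemma~\ref{escapetime}.

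Ito's formula yields $F(H(X_t^\e))-F(H(X_0^\e))=\int_0^t\mathrm{Dr}(s)\,ds+\Lambda_t$, where $\Lambda_t=\e^{\beta/2}\int_0^t(\log H(X_s^\e)+1)\rcoe\,dW_s$ and $\mathrm{Dr}(s)=(\log H+1)\e^\beta AH+\tfrac12\e^\beta|\rcoe|^2/H$. On the annulus $\e^{a\beta}/2\leq H\leq\e^{(a-d)\beta}$, the quadratic behavior of $H$ near the nondegenerate saddle forces $|\rcoe|^2/H$ to stay bounded while $|\log H+1|\leq C|\log\e|$, so $\bigl|\int_0^T\mathrm{Dr}(s)\,ds\bigr|\leq CT\e^\beta|\log\e|$, which is negligible against the target increment $F(\e^{(a-d)\beta})-F(H(X_0^\e))\sim -(a-d)\beta|\log\e|\e^{(a-d)\beta}$. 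With $\eta=\inf\{t:X_t^\e\in C_{ki}(\e^{(a-d)\beta})\cup C_{ki}(\e^{a\beta}/2)\}$ and using $(\log H+1)^2\geq\tfrac12(\log H)^2$ for $\e$ small, Lemma~\ref{escapetime} produces an event $E$ with ${\bm{\mathrm P}}(E)\geq 1-\exp(-\e^{-(1-2(a-d))\beta-\kappa})$ on which $\langle\Lambda\rangle_t\geq\tfrac12\e^\beta t$ for every $t\leq\eta\wedge T$.

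Writing $\Lambda_t=\tilde W(\langle\Lambda\rangle_t)$ by Dambis--Dubins--Schwarz, I would then apply Lemma~\ref{brownianatsaddle}(i) to the standard Brownian motion $\tilde W$ with time horizon $S_0=\tfrac12\e^\beta T$, obtaining an event $E'$ of probability at least $2\exp(-\e^{-(1-2(a-d))\beta-\kappa})$ on which $\tilde W_{S_0}<-4(a-d)\beta|\log\e|\e^{(a-d)\beta}$ while $\tilde W_s<\tfrac14 a\beta|\log\e|\e^{a\beta}$ for all $0<s<S_0$. On $E\cap E'$ the argument runs as follows: the upper barrier on $\tilde W$, combined with the drift bound, keeps $F(H(X_t^\e))$ strictly below $F(\e^{a\beta}/2)$ (hence $H(X_t^\e)>\e^{a\beta}/2$) whenever $\langle\Lambda\rangle_t<S_0$, so the process cannot exit the annulus through the lower side first; the factor $4$ of margin in $\tilde W_{S_0}$ then forces $\Lambda$ (and thus $F(H)$) to cross the target level $F(\e^{(a-d)\beta})-F(H(X_0^\e))$ well before the DDS-time reaches $S_0$, so $H$ hits $\e^{(a-d)\beta}$ at some real time $t\leq T$ (using the lower bound $\langle\Lambda\rangle_t\geq\tfrac12\e^\beta t$ from escapetime to invert the time change). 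Subtracting ${\bm{\mathrm P}}(E^c)$ gives ${\bm{\mathrm P}}(\tau\leq T)\geq\exp(-\e^{-(1-2(a-d))\beta-\kappa})$, which exceeds $\exp(-c\e^{-\beta})$ for $\e$ small provided $\kappa$ and $d$ are chosen with $\kappa<2(a-d)\beta$.

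The main obstacle is precisely the bookkeeping of the DDS time change in the concluding step. One has to deploy the two halves of Lemma~\ref{brownianatsaddle}(i) in tandem: the upper barrier is needed to guarantee that $\eta=\tau$, so that the escapetime bound remains in force up to $\tau$, and only then does the linear lower bound on $\langle\Lambda\rangle$ let one invert the DDS time change to conclude $\tau\leq T$. The factor $4$ built into the target value in Lemma~\ref{brownianatsaddle}(i), together with the $\tfrac12$ in the quadratic-variation bound, provide exactly the slack that makes this two-sided argument close.
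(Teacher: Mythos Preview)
Your proposal is correct and follows essentially the same approach as the paper: apply It\^o's formula to an $x\log x$-type function of $H$, represent the martingale via Dambis--Dubins--Schwarz, and combine Lemma~\ref{brownianatsaddle}(i) with Lemma~\ref{escapetime} to force the crossing. The only cosmetic differences are that the paper takes $f(x)=x\log x-x$ (so $f'=\log$ matches the integrand of Lemma~\ref{escapetime} exactly, avoiding your auxiliary inequality $(\log H+1)^2\ge\tfrac12(\log H)^2$), keeps the factor $\e^{\beta/2}$ outside the DDS time change rather than absorbing $\e^\beta$ into the quadratic variation, and organizes the closing step as a short proof by contradiction (assume $\eta>T$, locate $\eta'<T$ with DDS-time equal to $T$, and read off $f(H(X_{\eta'}^\e))<f(\e^{(a-d)\beta})$) rather than the direct two-sided barrier argument you outline.
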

\begin{proof}
Define $\tau'=\inf\{t:X_t^\e\in C_{ki}(\e^{a\beta}/2)\}$, $\eta=\tau\wedge\tau'$, and $f(x)=x\log x-x$. 
Since $f\in C^2([\e^{a\beta}/2,\e^{(a-d)\beta}])$, Ito's formula can be applied here:
\begin{align*}
    f(H(X_\eta^\e))=&\ f(H(x_0))+\e^{\beta}\int_0^\eta\left(\log H(X_s^\e)AH(X_s^\e)+\frac{|\rcoe|^2}{2H(X_s^\e)}\right)ds\\ 
&+\e^{\beta/2}\int_0^\eta\log H(X_s^\e)\rcoe dW_s\\
=&\ f(H(x_0))+\e^{\beta}\int_0^\eta\left(\log H(X_s^\e)AH(X_s^\e)+\frac{|\rcoe|^2}{2H(X_s^\e)}\right)ds\\ 
&+\e^{\beta/2}\Tilde{W}\left(\int_0^\eta|\log H(X_s^\e)\rcoe|^2ds\right),
\end{align*} where $\Tilde W$ is another Brownian motion. To simplify notations, define events $R$ and $S$ as follows:
\begin{align*}
    R&=\left\{\e^{\beta/2}\Tilde{W}_{T}<-4\e^{(a-d)\beta}(a-d)\beta|\log\e|,\ \e^{\beta/2}\Tilde W_t<\frac{1}{4}a\beta|\log\e|\e^{a\beta}\ \text{for every}\ 0<t<T\right\},\\
    S&=\left\{\int_0^t|\log H(X_s^\e)\rcoe|^2ds>t,\ \forall\ t\leq\eta\wedge T\right\}.
\end{align*}
By Lemma~\ref{brownianatsaddle}(\romannumeral1) and Lemma~\ref{escapetime}, we know that ${\bm{\mathrm P}}(R\cap S)\geq\exp\left(-\e^{-(1-2(a-d))\beta-\kappa}\right)$ for any positive $\kappa$ and all sufficiently small $\e$. 
Therefore, since $\kappa$ can be taken arbitrarily small, it remains to show that $R\cap S$ implies $\tau\leq T$. 
On this event, if we assume that $\eta>T$, then 
$$\int_0^T|\log H(X_s^\e)\rcoe|^2ds>T.$$ So, we may define $\eta'=\inf\{t:\int_0^t|\log H(X_s^\e)\rcoe|^2ds=T\}<T<\eta$, and still write the equation
\begin{align*}
    f(H(X_{\eta'}^\e))=&\ f(H(x_0))+\e^{\beta}\int_0^{\eta'}\left(\log H(X_s^\e)AH(X_s^\e)+\frac{|\rcoe|^2}{2H(X_s^\e)}\right)ds\\ 
&+\e^{\beta/2}\Tilde{W}\left(\int_0^{\eta'}|\log H(X_s^\e)\rcoe|^2ds\right)\\
=&\ f(H(x_0))+\e^{\beta}\int_0^{\eta'}\left(\log H(X_s^\e)AH(X_s^\e)+\frac{|\rcoe|^2}{2H(X_s^\e)}\right)ds+\e^{\beta/2}\Tilde W_T.
\end{align*}
Since the Brownian motion here dominates the whole right hand side due to the definition of $R$,  we have $f(H(X_{\eta'}^\e))<f(\e^{(a-d)\beta})$, which contradicts the fact that $\eta'<\eta$. Thus $\eta\leq T$. Then it follows that $\tau\leq T$ since $f(H(X_\eta^\e))\not=f(\e^{a\beta}/2)$ by the definition of the set $R$.
\end{proof}
We can complete the proofs corresponding to Steps 2 and 4 above in the same way.
\begin{lemma}
\label{firstescapetime}
Suppose that $O_k$ is an interior vertex and an endpoint of $I_i$. For every $d<\frac{1/\beta-1}{3}\wedge \frac{1}{2}$, and for every $T>0$, we have the following estimate for each $x_0\in\gamma_{out}\cap D_{i}(\frac{9}{10}\e^{\frac{1+d}{2}\beta},\frac{11}{10}\e^{\frac{1+d}{2}\beta})$ and $\e$ small enough
$${\bm{\mathrm P}}\left(\int_0^t|\log H(X_s^\e)\rcoe|^2ds>t,\ \forall\ t\leq\eta\wedge T\right)\geq 1-\exp\left(-\e^{-2d\beta-\kappa}\right),$$where $\eta=\inf\{t:X_t^\e\in C_{ki}(\e^{\frac{1-d}{2}\beta})\cup C_{ki}(\e^{\frac{1+d}{2}\beta}/2)\}$.
\end{lemma}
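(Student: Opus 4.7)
The plan is to adapt the argument used to prove Lemma~\ref{escapetime} (whose proof is deferred to the Appendix) to the specific scales $\e^{\frac{1+d}{2}\beta}$ and $\e^{\frac{1-d}{2}\beta}$ appearing here, with Lemma~\ref{brownianatsaddle}(ii) replacing Lemma~\ref{brownianatsaddle}(i) as the driving Brownian estimate. The qualitative mechanism is the same as described informally after Lemma~\ref{escapetime}: the process $X_t^\e$ starts on $\gamma_{out}$ and performs excursions alternating between passages through the Morse--Palais neighborhood $V$ of the saddle (where $H$ is small but the deterministic duration is $O(|\log H|)$ by~\eqref{eq:T}) and traversals of the bulk of the edge $I_i$ (where the integrand $|\log H(X_s^\e)\rcoe|^2$ is bounded below by a strictly positive constant and the duration is $O(1)$). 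The bulk portions alone already make the integral grow at linear rate, so the event $S=\{\int_0^t|\log H(X_s^\e)\rcoe|^2\,ds>t,\ \forall\,t\leq\eta\wedge T\}$ can fail only if the process spends a disproportionate fraction of time stuck very close to the separatrix.

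Accordingly, I would first apply Ito's formula to $H(X_t^\e)$ to express its fluctuation as a time-changed Brownian motion,
\[
H(X_t^\e)=H(x_0)+\e^{\beta}\int_0^t AH(X_s^\e)\,ds+\e^{\beta/2}\tilde W\!\left(\int_0^t|\rcoe|^2\,ds\right),
\]
and define the good Brownian event $R$ to be the event from Lemma~\ref{brownianatsaddle}(ii) applied to this $\tilde W$, so that ${\bm{\mathrm P}}(R)\geq 1-\exp(-\e^{-2d\beta-\kappa})$. The two-stage form of $R$ is essential here: during the short initial window of clock length $\e^{d\beta}$ it caps the upward fluctuation of $\tilde W$ by $\e^{\frac{1+d}{2}\beta}$, while afterwards it forces $\tilde W$ to stay below $-\e^{\frac{1-d}{2}\beta}$. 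Translated back to $H(X_t^\e)$ through the above Ito expansion, this ensures that $H$ neither escapes through the upper level $\e^{\frac{1-d}{2}\beta}$ too early nor drops below $\e^{\frac{1+d}{2}\beta}/2$ for a nontrivial portion of time before $\eta\wedge T$.

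Combining these probabilistic bounds with the deterministic excursion analysis inside $V$ via the Morse--Palais coordinates (together with a Gronwall estimate on $|X_t^\e-\xi_t^\e|$ in the spirit of step~\textbf{\romannumeral2} of the proof of Lemma~\ref{loclem}) shows that on the event $R$ the trajectory spends at most an $o(1)$ fraction of its total time in the near-saddle regime where the integrand is small. The bulk contribution therefore dominates, giving $\int_0^t|\log H(X_s^\e)\rcoe|^2\,ds>t$ for every $t\leq\eta\wedge T$, so that $R$ is contained in $S$ up to a negligible event; this yields the claimed lower bound on ${\bm{\mathrm P}}(S)$.

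The main obstacle is the bookkeeping step ``no disproportionate time near the saddle''. Quantitatively, one has to combine the logarithmic deterministic exit time estimate in~\eqref{eq:T} with the derivative bounds~\eqref{derv1}--\eqref{derv2} to control how the small diffusive perturbation deforms the passage-time structure inside $V$, and then verify that on $R$ the cumulative time spent in the dangerous region $\{H<\e^{\frac{1+d}{2}\beta}/2\}$ is small enough not to spoil the integral inequality. This verification is essentially identical to the argument given in the Appendix for Lemma~\ref{escapetime}; only the Brownian input has been upgraded from part (i) to part (ii) of Lemma~\ref{brownianatsaddle}.
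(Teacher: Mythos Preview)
Your proposal contains a genuine error in its use of Lemma~\ref{brownianatsaddle}. You write that the event $R$ from Lemma~\ref{brownianatsaddle}(ii) satisfies ${\bm{\mathrm P}}(R)\geq 1-\exp(-\e^{-2d\beta-\kappa})$, but that lemma gives ${\bm{\mathrm P}}(R)\geq 2\exp(-\e^{-2d\beta-\kappa})$, an exponentially \emph{small} lower bound, not one close to $1$. The three parts of Lemma~\ref{brownianatsaddle} are all statements about rare Brownian events; they are the inputs to Lemmas~\ref{steps}, \ref{firststep}, and \ref{laststep}, not to Lemmas~\ref{escapetime}, \ref{firstescapetime}, and \ref{lastescapetime}. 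Indeed, in the proof of Lemma~\ref{firststep} one combines $P(R)\geq 2\exp(-\e^{-2d\beta-\kappa})$ from Lemma~\ref{brownianatsaddle}(ii) with $P(S)\geq 1-\exp(-\e^{-2d\beta-\kappa})$ from the present Lemma~\ref{firstescapetime} to get $P(R\cap S)\geq\exp(-\e^{-2d\beta-\kappa})$. So you have effectively conflated Lemma~\ref{firstescapetime} (a high-probability ``good-integrand'' estimate) with Lemma~\ref{firststep} (a small-probability escape estimate).

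More specifically, your opening premise ``with Lemma~\ref{brownianatsaddle}(ii) replacing Lemma~\ref{brownianatsaddle}(i)'' is off: the proof of Lemma~\ref{escapetime} in the Appendix does not use Lemma~\ref{brownianatsaddle} at all. It works entirely with the slow-motion process $\tilde X_t^\e$, establishes a Gaussian tail bound of the form ${\bm{\mathrm P}}\bigl(\sup_{\Delta\in[0,1]}|\sqrt{\e}\int_k^{k+\Delta}\sigma(\tilde X_s^\e)\,dW_s|>\e^{\alpha}\bigr)<\exp(-\e^{-\cdots})$ on each unit interval of slow time, and then uses Gronwall together with the Morse--Palais exit-time bound \eqref{eq:T} and the derivative estimates \eqref{derv1}--\eqref{derv2} to show that on the intersection of these good events the deterministic escape structure is only mildly perturbed, so that each rotation exits $F$ in at most $\overline M|\log\e|$ and contributes enough to the integral. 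The correct adaptation to the present lemma is to rerun that argument with the exponent $\frac{1}{2}-\frac{1}{2}(1-2(a-d))\beta-2\kappa$ replaced by the value appropriate to the annulus $[\frac{1}{2}\e^{\frac{1+d}{2}\beta},\e^{\frac{1-d}{2}\beta}]$, which yields the tail bound $\exp(-\e^{-2d\beta-\kappa})$ after summing over the $O(\e^{\beta-1})$ rotations. Your description of the deterministic excursion mechanism is fine, but the probabilistic input must be this direct Gaussian tail estimate, not Lemma~\ref{brownianatsaddle}(ii).
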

Since the proof is similar to that of Lemma~\ref{escapetime}, the details are omitted.

\begin{lemma}
\label{firststep}
Suppose that $O_k$ is an interior vertex and an endpoint of $I_i$. For every $d<\frac{1/\beta-1}{3}\wedge \frac{1}{2}$, and for every $T>0$, $c>0$, there exists $\e_0$ small enough, such that for each $x_0\in\gamma_{out}\cap D_{i}(\frac{9}{10}\e^{\frac{1+d}{2}\beta},\frac{11}{10}\e^{\frac{1+d}{2}\beta})$, we have $${\bm{\mathrm P}}(\tau\leq T)\geq\exp{(-c\e^{-\beta})},\ \forall\e<\e_0,$$where $\tau=\inf\{t:X_t^\e\in C_{ki}(\e^{\frac{1-d}{2}\beta})\}.$
\end{lemma}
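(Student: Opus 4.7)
The plan is to mirror the argument of \lemref{steps} almost verbatim, swapping in (ii) of \lemref{brownianatsaddle} and \lemref{firstescapetime} in place of (i) and \lemref{escapetime}. Introduce the auxiliary stopping time $\tau' := \inf\{t: X_t^\e \in C_{ki}(\e^{\frac{1+d}{2}\beta}/2)\}$ and $\eta := \tau \wedge \tau'$. Apply Ito's formula to $f(H(X_t^\e))$ with $f(x) = x\log x - x$ on $[0,\eta]$ to write
$$f(H(X_\eta^\e)) = f(H(x_0)) + \e^\beta \int_0^\eta \Bigl(\log H(X_s^\e)\,AH(X_s^\e) + \tfrac{|\nabla H(X_s^\e)^*\sigma(X_s^\e)|^2}{2H(X_s^\e)}\Bigr)ds + \e^{\beta/2}\tilde W\Bigl(\int_0^\eta|\log H(X_s^\e)\nabla H(X_s^\e)^*\sigma(X_s^\e)|^2 ds\Bigr),$$
with $\tilde W$ a standard Brownian motion obtained from the martingale part.

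Next, define
$$R=\left\{\e^{\beta/2}\tilde W_T<-2(1-d)\beta\e^{\frac{1-d}{2}\beta}|\log\e|;\ \e^{\beta/2}\tilde W_t<\e^{\frac{1+d}{2}\beta},\ \forall\,0<t<\e^{d\beta};\ \e^{\beta/2}\tilde W_t<-\e^{\frac{1-d}{2}\beta},\ \forall\,\e^{d\beta}<t<T\right\}$$
and $S=\{\int_0^t|\log H(X_s^\e)\nabla H(X_s^\e)^*\sigma(X_s^\e)|^2 ds>t,\ \forall\,t\le\eta\wedge T\}$. By \lemref{brownianatsaddle}(ii) and \lemref{firstescapetime}, $\mathbf{P}(R\cap S)\ge\exp(-\e^{-2d\beta-\kappa})$ for every $\kappa>0$ and all sufficiently small $\e$. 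Since $d<\frac{1}{2}$, we can fix $\kappa>0$ small enough that $2d\beta+\kappa<\beta$; then this lower bound exceeds $\exp(-c\e^{-\beta})$ for all small $\e$, giving the claimed bound.

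It remains to show $R\cap S\subseteq\{\tau\le T\}$. For $\eta\le T$: arguing by contradiction as in \lemref{steps}, if $\eta>T$ then by $S$ the clock exceeds $T$ at time $T$, so $\eta':=\inf\{t:\int_0^t|\log H\cdot\nabla H^*\sigma|^2 ds=T\}<T<\eta$ is well-defined. Evaluating the Ito expansion at $\eta'$, the drift integral is $O(\e^\beta\cdot T/\e^{\frac{1+d}{2}\beta})=O(\e^{\frac{1-d}{2}\beta})$ (using $H\ge\e^{\frac{1+d}{2}\beta}/2$ on $[0,\eta]$), while by the first condition of $R$ the Brownian term $\e^{\beta/2}\tilde W_T$ is of order $-\e^{\frac{1-d}{2}\beta}|\log\e|$ and therefore dominates. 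This forces $f(H(X_{\eta'}^\e))<f(\e^{\frac{1-d}{2}\beta})$; since $f$ is strictly decreasing on $(0,1)$, we conclude $H(X_{\eta'}^\e)>\e^{\frac{1-d}{2}\beta}$, contradicting $\eta'<\tau$. To rule out $\eta=\tau'$, note that $f(\e^{\frac{1+d}{2}\beta}/2)-f(\e^{\frac{1+d}{2}\beta})$ is of order $\e^{\frac{1+d}{2}\beta}|\log\e|$, which strictly exceeds the upper bound $\e^{\frac{1+d}{2}\beta}$ on $\e^{\beta/2}\tilde W_t$ for $t<\e^{d\beta}$ given by the second condition of $R$; and in the late regime $\e^{d\beta}<t<T$ the third condition makes $\e^{\beta/2}\tilde W_t$ actually negative, so $f(H(X_t^\e))$ remains safely below $f(\e^{\frac{1+d}{2}\beta}/2)$ throughout.

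The main obstacle is the bookkeeping for the drift: the $|\nabla H^*\sigma|^2/(2H)$ contribution is singular at the saddle, and one must keep it subdominant to the Brownian scales in \emph{both} time regimes. The pre-$\eta$ lower bound $H\ge\e^{\frac{1+d}{2}\beta}/2$ and the constraint $d<\frac{1/\beta-1}{3}$ (already needed for \lemref{brownianatsaddle}(ii) and \lemref{firstescapetime}) are precisely what make this balance hold, so once those inputs are cited, the argument reduces to the routine drift-versus-Brownian comparison carried out in \lemref{steps}.
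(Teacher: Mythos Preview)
Your proposal is correct and follows exactly the paper's approach: the same auxiliary stopping times $\tau'$ and $\eta$, the same function $f(x)=x\log x-x$, the same events $R$ and $S$, and the same cited inputs (\lemref{brownianatsaddle}(ii) and \lemref{firstescapetime}). You in fact supply more detail on why $R\cap S$ forces $\tau\le T$ than the paper does, which simply asserts the implication.
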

\begin{proof}
The proof is similar to that of Lemma~\ref{steps}. Define $\tau'=\inf\{t:X_t^\e\in C_{ki}(\e^{\frac{1+d}{2}\beta}/2)\}$, $\eta=\tau\wedge\tau'$, and $f(x)=x\log x-x$. 
Since $f\in C^2([\frac{1}{2}\e^{\frac{1+d}{2}\beta},\e^{\frac{1-d}{2}\beta}])$, Ito's formula can be applied here:
\begin{align*}
    f(H(X_\eta^\e))=&\ f(H(x_0))+\e^{\beta}\int_0^\eta\left(\log H(X_s^\e)AH(X_s^\e)+\frac{|\rcoe|^2}{2H(X_s^\e)}\right)ds\\ 
&+\e^{\beta/2}\Tilde{W}\left(\int_0^\eta|\log H(X_s^\e)\rcoe|^2ds\right).
\end{align*}
Let 
\begin{align*}
    R=&\begin{Bmatrix}
\e^{\beta/2}\Tilde{W}_T<-2(1-d)\beta\e^{\frac{1-d}{2}\beta}|\log\e|;\ \e^{\beta/2}\Tilde{W}_t<\e^{\frac{1+d}{2}\beta},\ \forall\ 0<t<\e^{d\beta};\\ \e^{\beta/2}\Tilde{W}_t<-\e^{\frac{1-d}{2}\beta},\ \forall\ \e^{d\beta}<t<T
\end{Bmatrix},\\
    S=&\left\{\int_0^t|\log H(X_s^\e)\rcoe|^2ds>t,\ \forall\ t\leq\eta\wedge T\right\}.
\end{align*}
By Lemma~\ref{brownianatsaddle}(\romannumeral2) and Lemma~\ref{firstescapetime}, we know that ${\bm{\mathrm P}}(R\cap S)\geq\exp\left(-\e^{-2d\beta-\kappa}\right)$ for any positive $\kappa$, and $R\cap S$ implies $\tau\leq T$, so we're done.
\end{proof}
The proof of the following lemma is also similar to that of Lemma~\ref{escapetime}.
\begin{lemma}
\label{lastescapetime}
Suppose that $O_k$ is an interior vertex and an endpoint of $I_i$. For every $d<\frac{1/\beta-1}{2}\wedge 1$, and for every $T>0$, we have the following estimate for each $x_0\in\gamma_{out}\cap D_{i}(\frac{9}{10}\e^{d\beta},\frac{11}{10}\e^{d\beta})$ and $\e$ small enough
$${\bm{\mathrm P}}\left(\int_0^t|\log H(X_s^\e)\rcoe|^2ds>t,\ \forall\ t\leq\eta\wedge T\right)\geq 1-\exp\left(-\e^{-\beta-\kappa}\right),$$where $\eta=\inf\{t:X_t^\e\in C_{ki}(\delta)\cup C_{ki}(\e^{d\beta}/2)\}$.
\end{lemma}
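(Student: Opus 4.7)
I would adapt the proof of Lemma~\ref{escapetime}, keeping the same framework. Work in Morse--Palais coordinates $(\mu,\nu)=\psi^{-1}(x)$, so that $H(\psi(\mu,\nu))=\mu^2-\nu^2$, and apply Ito's formula to the deterministic exit-time function $T(\mu,\nu)$ from~\eqref{eq:Tmunu}. Since $T$ decreases at unit rate along the unscaled Hamiltonian flow, the scaled dynamics~\eqref{eq:3} produce a drift of $-\e^{\beta-1}$ per unit time in $T(\psi^{-1}(X_t^\e))$, plus an Ito correction and a martingale whose magnitudes are controlled by $C\e^\beta/H^2$ via the derivative bounds~\eqref{derv1}--\eqref{derv2}. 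On the stopped interval $[0,\eta\wedge T]$ we have $H(X_s^\e)\geq\e^{d\beta}/2$, so these error contributions are of order $\e^{(1-2d)\beta}$, much smaller than the principal drift $\e^{\beta-1}$ precisely because $d<(1/\beta-1)/2$.

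The deterministic lower bound for the integrand is as follows. For each $h\in[\e^{d\beta}/2,\delta]$, the transit integral $\int|\rcoe|^2\,ds$ across one pass through the Morse--Palais neighborhood is bounded below by $c_0\e^{1-\beta}$ for a constant $c_0>0$ independent of $h$, because $|\nabla H^*\sigma|^2\geq c'|\nabla H|^2$ and the half-period arc-length integral along $\{H=h\}\cap U$ stays bounded from below. Multiplying by $(\log h)^2\geq(d\beta|\log\e|)^2$ shows that one transit contributes at least $c_0\e^{1-\beta}(\log h)^2$ to $\int|\log H(X_s^\e)\rcoe|^2ds$, whereas the transit consumes only $O(\e^{1-\beta}|\log h|)$ time by~\eqref{eq:T}; the ratio is at least $|\log h|\gtrsim|\log\e|$. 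The same comparison applied to each dyadic shell $\{H\in[2^{-j-1}\delta,2^{-j}\delta]\}$ yields the strict inequality $\int_0^t|\log H(X_s^\e)\rcoe|^2ds>t$ uniformly in $t\leq\eta\wedge T$ on the deterministic skeleton, with a margin of order $|\log\e|$.

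To lift this to a probability bound of the required form $1-\exp(-\e^{-\beta-\kappa})$, I apply Bernstein's inequality to the martingale $M_t$ in Ito's expansion of $T(\psi^{-1}(X_t^\e))$: on $[0,\eta\wedge T]$ its quadratic variation is at most $C\e^{(1-2d)\beta}T$, so the probability that $\sup_{t\leq T}|M_t|$ becomes comparable to the deterministic drift $\e^{\beta-1}T$ is bounded by $\exp(-c\,\e^{(1+2d)\beta-2})$. Under $d<(1/\beta-1)/2$ the exponent $(1+2d)\beta-2$ is strictly less than $-\beta$, so for any $\kappa>0$ and $\e$ small enough this bound is dominated by $\exp(-\e^{-\beta-\kappa})$. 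On the complementary event the trajectory stays close to its deterministic descent and the uniform integral inequality from the previous paragraph transfers to the perturbed process.

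The main obstacle I expect is the management of the many dyadic shells the trajectory can traverse between $\e^{d\beta}$ and $\delta$, together with occasional noise-induced reversals: unlike Lemma~\ref{escapetime}, where the number of shells is $O(d|\log\e|)$ with $d$ small, here the shell count is $O(|\log\e|)$, and each reversal of the martingale has to be absorbed into the overall exponential bound. This is handled by taking a union bound over the $O(|\log\e|)$ dyadic thresholds, which costs only a polynomial factor and is comfortably absorbed into the slack in the exponent $-\beta-\kappa$.
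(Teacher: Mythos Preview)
Your approach diverges from the paper's in a way that creates a genuine gap. The paper indicates only that the proof follows that of Lemma~\ref{escapetime}; that argument is a rotation-by-rotation decomposition. Each rotation consists of an excursion from $\gamma_{out}$ to $\gamma_{in}$ outside the Morse--Palais neighborhood $V$ (where the deterministic period is bounded) followed by a transit from $\gamma_{in}$ back to $\gamma_{out}$ inside $V$ (where the exit time is controlled via the function $T(\mu,\nu)$). On each unit interval of unscaled time one bounds $\sqrt\e\int\sigma\,dW$ directly; for Lemma~\ref{lastescapetime} the relevant threshold is of order $\e^{(1-\beta)/2-2\kappa}$, which simultaneously yields the probability bound $\exp(-\e^{-\beta-\kappa})$ and, via Gronwall and the gradient estimate $\lvert\nabla T\rvert\leq C/H\leq 2C\e^{-d\beta}$, keeps the random exit time within a factor of two of the deterministic one precisely under the hypothesis $d<(1/\beta-1)/2$. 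The integral inequality then follows because, on the good event, each rotation spends at most $C\lvert\log H\rvert$ unscaled time in $F$ while contributing at least $c\lvert\log H\rvert^2$ to the integral during the order-one time spent outside $F$.

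Your Ito expansion of $T(\psi^{-1}(X_t^\e))$ over the entire interval $[0,\eta\wedge T]$ cannot be written down: the function $T\circ\psi^{-1}$ is defined only on $V_0$, and the process exits $V$ on every one of the order-$\e^{\beta-1}$ rotations it makes before scaled time $T$. Consequently your global Bernstein estimate on $M_t$, your bound $\langle M\rangle_T\leq C\e^{(1-2d)\beta}T$, and your comparison with a drift of size $\e^{\beta-1}T$ are all undefined as stated. The paper never applies Ito to $T$; it uses $T$ only to compare nearby deterministic trajectories through the first-derivative bounds~\eqref{derv1}--\eqref{derv2}, which also sidesteps any need for the second derivatives your Ito correction would require. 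Your dyadic-shell decomposition is likewise not the natural unit of analysis, since $H(X_t^\e)$ does not climb monotonically through shells but oscillates, making many rotations at each level; the union bound must be over rotations, not shells. If you rework the argument per rotation---applying your martingale bound only on each transit through $V$ and handling the outside-$V$ segment by direct comparison with the deterministic flow---you essentially recover the paper's proof.
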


\begin{lemma}
\label{laststep}
Suppose that $O_k$ is an interior vertex and an endpoint of $I_i$. For every $d<\frac{1/\beta-1}{2}\wedge 1$, and for every $T>0$, $c>0$, there exist $\delta>0$ and $\e_0>0$ such that, for each $x_0\in\gamma_{out}\cap D_{i}(\frac{9}{10}\e^{d\beta},\frac{11}{10}\e^{d\beta})$, we have
$${\bm{\mathrm P}}(\tau\leq T)\geq\exp{(-c\e^{-\beta})},\ \forall\e<\e_0,$$where $\tau=\inf\{t:X_t^\e\in C_{ki}(\delta)\}.$
\end{lemma}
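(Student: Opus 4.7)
The argument will directly parallel those of Lemma~\ref{steps} and Lemma~\ref{firststep}, using Lemma~\ref{brownianatsaddle}(iii) and Lemma~\ref{lastescapetime} in place of their earlier analogues. The genuinely new ingredient is the calibration of the target radius $\delta$, which is now a fixed constant rather than a vanishing power of $\e$, so that the required Brownian displacement in the Ito picture is of order $\e^{-\beta/2}$ (the threshold governing the exponent $c$).

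First I set $\tau'=\inf\{t:X_t^\e\in C_{ki}(\e^{d\beta}/2)\}$, $\eta=\tau\wedge\tau'$, and $f(x)=x\log x-x$. Applying Ito's formula on the annulus $\{\e^{d\beta}/2\le H\le\delta\}$ yields the familiar decomposition
\begin{align*}
f(H(X_\eta^\e))=\ &f(H(x_0))+\e^\beta\!\int_0^\eta\!\Bigl(\log H(X_s^\e)AH(X_s^\e)+\tfrac{|\rcoe|^2}{2H(X_s^\e)}\Bigr)ds\\
&+\e^{\beta/2}\Tilde W\!\Bigl(\!\int_0^\eta\!|\log H(X_s^\e)\rcoe|^2ds\Bigr),
\end{align*}
for a one-dimensional Brownian motion $\Tilde W$. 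Because $f(\delta)=\delta(\log\delta-1)\to 0$ as $\delta\to 0^+$, I then choose $\delta$ small enough (depending only on $c,T$) that $4f(\delta)^2/T<c$, set $A=2|f(\delta)|$, and introduce
\begin{align*}
R&=\Bigl\{\e^{\beta/2}\Tilde W_T<-A,\ \e^{\beta/2}\Tilde W_t<\tfrac14 d\beta|\log\e|\e^{d\beta}\ \forall\,0<t<T\Bigr\},\\
S&=\Bigl\{\int_0^t|\log H(X_s^\e)\rcoe|^2ds>t\ \forall\,t\le\eta\wedge T\Bigr\}.
\end{align*}
Lemma~\ref{brownianatsaddle}(iii) gives $\bm{\mathrm P}(R)\ge 2\exp(-A^2\e^{-\beta}/T)\ge 2\exp(-c\e^{-\beta})$, and Lemma~\ref{lastescapetime} gives $\bm{\mathrm P}(S^c)\le\exp(-\e^{-\beta-\kappa})$ for any small $\kappa>0$, so $\bm{\mathrm P}(R\cap S)\ge\exp(-c\e^{-\beta})$ once $\e$ is small enough.

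It then remains to show $R\cap S\subset\{\tau\le T\}$, which I will do by the same contradiction argument as in Lemma~\ref{steps}: assuming $\eta>T$, condition $S$ lets me introduce $\eta'=\inf\{t:\int_0^t|\log H(X_s^\e)\rcoe|^2ds=T\}<T$; at this point the stochastic term in the Ito expansion equals $\e^{\beta/2}\Tilde W_T<-2|f(\delta)|$, while the drift is $O(\e^{(1-d)\beta}|\log\e|)$ and $f(H(x_0))$ is $O(\e^{d\beta}|\log\e|)$, both negligible. Together these force $f(H(X_{\eta'}^\e))<f(\delta)$, i.e.\ $H(X_{\eta'}^\e)>\delta$, contradicting $\eta'<\eta\le\tau$. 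The upper bound built into $R$ symmetrically keeps $f(H)$ from reaching $f(\e^{d\beta}/2)\approx\tfrac12 d\beta|\log\e|\e^{d\beta}$, ruling out $\eta=\tau'$, so $\tau=\eta\le T$. The main obstacle I anticipate is controlling the drift contribution $\e^\beta\int_0^\eta|\rcoe|^2/(2H)\,ds$, since $|\nabla H|^2/H$ blows up near the separatrix; this is absorbed by the $\e^\beta$ prefactor together with the lower bound $H\ge\e^{d\beta}/2$ and the hypothesis $d<(1/\beta-1)/2$, which together keep that term $o(1)$ as required.
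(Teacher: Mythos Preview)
Your proposal is correct and follows essentially the same approach as the paper: the same function $f(x)=x\log x-x$, the same events $R$ and $S$ built from Lemma~\ref{brownianatsaddle}(iii) and Lemma~\ref{lastescapetime}, and the same calibration $4f(\delta)^2<cT$. Your write-up actually supplies more detail than the paper on why $R\cap S\subset\{\tau\le T\}$ and on the drift control; the only minor imprecision is that the hypothesis $d<(1/\beta-1)/2$ is what makes Lemmas~\ref{brownianatsaddle}(iii) and~\ref{lastescapetime} applicable, whereas the drift bound $\e^\beta\int_0^{\eta'}|\rcoe|^2/(2H)\,ds=O(\e^{(1-d)\beta})=o(1)$ only uses $d<1$.
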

\begin{proof}
The proof is, again, similar to that of Lemma~\ref{steps}. Define $\tau'=\inf\{t:X_t^\e\in C_{ki}(\e^{d\beta}/2)\}$, $\eta=\tau\wedge\tau'$ and $f(x)=x\log x-x$. 
Since $f\in C^2([\frac{1}{2}\e^{d\beta},\delta])$, Ito's formula can be applied here:
\begin{align*}
    f(H(X_\eta^\e))=&\ f(H(x_0))+\e^{\beta}\int_0^\eta\left(\log H(X_s^\e)AH(X_s^\e)+\frac{|\rcoe|^2}{2H(X_s^\e)}\right)ds\\ 
&+\e^{\beta/2}\Tilde{W}\left(\int_0^\eta|\log H(X_s^\e)\rcoe|^2ds\right).
\end{align*}
Let 
\begin{align*}
    R=&\left\{\e^{\beta/2}\Tilde{W}_{T}<2f(\delta),\ \e^{\beta/2}\Tilde W_t<\frac{1}{4}d\beta|\log\e|\e^{d\beta}\ \text{for every}\ 0<t<T\right\},\\
    S=&\left\{\int_0^t|\log H(X_s^\e)\rcoe|^2ds>t,\ \forall\ t\leq\eta\wedge T\right\}.
\end{align*}
By Lemma~\ref{brownianatsaddle}(\romannumeral3) (with $A=-2f(\delta)$) and Lemma~\ref{lastescapetime}, we know that ${\bm{\mathrm P}}(R\cap S)\geq\exp\left(-\frac{4f(\delta)^2}{T}\e^{-\beta}\right)$, and $R\cap S$ implies $\tau\leq T$, so we're done if we choose $\delta$ so small that $4f(\delta)^2<cT$.
\end{proof}
The following lemma ensures that the assumptions that the process starts on a piece of $\gamma_{out}$ in the lemmas above make sense, since the process reaches the piece of the curve with a uniformly positive probability.
\begin{lemma}
\label{gammaout}
For $A<1/2$ and every fixed $T>0$, we have the following estimate for $\e$ small enough:
$${\bm{\mathrm P}}(\tau<T)\geq1/2,$$where $\tau=\inf\{t:X_t^\e\in\gamma_{out}\cap D_i(\frac{9}{10}\e^A,\frac{11}{10}\e^A)\}$, for each $x_0\in C_{ki}(\e^A)$.
\end{lemma}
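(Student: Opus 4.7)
The plan is to compare $X_t^\e$ to the deterministic trajectory $\xi_t$ with $\xi_0=x_0$. Since $\xi_t$ preserves $H$, it remains on $C_{ki}(\e^A)$ and, by integrating the Morse-coordinate ODE $\dot\nu=2\mu/\det J_\psi$ along the hyperbola $\mu^2-\nu^2=\e^A$ (or equivalently via~\eqref{eq:T} at $H=\e^A$), exits $V$ through $\gamma_{\mathrm{out}}$ in rescaled time $\tau_{\mathrm{det}}\leq C\e^{1-\beta}|\log\e|$, which is much less than $T$ for small $\e$. The lemma then reduces to showing that, with probability at least $1/2$, the random process also exits $V$ at $\gamma_{\mathrm{out}}$ by time $2\tau_{\mathrm{det}}$ with $H(X_{\tau_V}^\e)\in(\tfrac{9}{10}\e^A,\tfrac{11}{10}\e^A)$.

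I would handle this via two complementary estimates. First, Ito's formula gives $H(X_t^\e)=\e^A+\e^\beta\int_0^t AH\,ds+\e^{\beta/2}\int_0^t\rcoe\,dW_s$, and Doob's $L^2$ inequality applied to the martingale piece (whose quadratic variation over $[0,2\tau_{\mathrm{det}}]$ is $O(\e|\log\e|)$) yields
$$P\Bigl(\sup_{t\leq 2\tau_{\mathrm{det}}}|H(X_t^\e)-\e^A|>\tfrac{\e^A}{10}\Bigr)=O\bigl(\e^{1-2A}|\log\e|\bigr)\longrightarrow 0,$$
where the hypothesis $A<1/2$ is essential. On this event the trajectory cannot touch $\{\mu=0\}$, so the $\mu$-coordinate stays positive throughout (WLOG $\mu(\psi^{-1}(x_0))>0$). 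Second, writing $\eta_t=\psi^{-1}(X_t^\e)=(\mu_t,\nu_t)$ and applying Ito to the second coordinate,
$$d\nu_t=\frac{2\e^{\beta-1}\mu_t}{\det J_\psi(\eta_t)}\,dt+O(\e^\beta)\,dt+\e^{\beta/2}\,b_\nu(\eta_t)\,dW_t,$$
where $b_\nu$ is the $\nu$-row of $J_{\psi^{-1}}\sigma$ and is bounded. The drift dominates at $2\e^{\beta-1}\sqrt{0.9\e^A+\nu_t^2}/\overline M$; the substitution $\theta=\sinh^{-1}(\nu/\sqrt{0.9\e^A})$ linearizes the corresponding deterministic lower-bound ODE to a constant-speed drift $c=2\e^{\beta-1}/\overline M$, so the deterministic $\theta$ reaches $\sinh^{-1}(l/\sqrt{0.9\e^A})=O(|\log\e|)$ in time $O(\e^{1-\beta}|\log\e|)$. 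The martingale perturbation of $\nu$ is $O(\sqrt{\e|\log\e|})\ll l$ by Doob, so $\nu_t$ reaches $+l$ by $2\tau_{\mathrm{det}}$ with probability $1-o(1)$, placing $X_t^\e$ on $\gamma_{\mathrm{out}}$. Combining with the $H$-bound gives $P(\tau<T)\geq 1-o(1)\geq 1/2$ for $\e$ small. If $x_0\notin V$, a preliminary step moves $X_t^\e$ to $\gamma_{\mathrm{in}}$ in rescaled time $O(\e^{1-\beta})$ through a region bounded away from the saddle, where a direct Gronwall/Doob comparison as in the proof of Lemma~\ref{loclem} applies; the argument then restarts from $\gamma_{\mathrm{in}}$.

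The main obstacle is that a direct Gronwall comparison of $|X_t^\e-\xi_t|$ fails, because the hyperbolic flow near the saddle amplifies an initial martingale displacement of size $\sqrt{\e|\log\e|}$ by a factor $\e^{-\Theta(1)}$ over the exit window $\tau_{\mathrm{det}}$, which can exceed the required accuracy $\e^A$. The workaround above is to track only the two dynamically well-behaved scalars: $H$, which is conserved by the drift so its deviations come purely from the martingale quadratic variation, and $\nu$, which has a strictly positive, monotone drift on the good event $\{\mu>0\}$ and so is controlled by standard martingale estimates. The threshold $A<1/2$ is exactly the sharp condition ensuring that the martingale fluctuations $\sqrt{\e|\log\e|}$ accumulated over the deterministic exit window remain $o(\e^A)$.
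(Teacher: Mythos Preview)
Your approach is correct but takes a genuinely different route from the paper's. The paper simply reuses the machinery of Lemma~\ref{escapetime}: it passes to the slow time scale, chooses $\alpha\in(A,1/2)$, and on the event
\[
E=\bigcap_{k=0}^{2A\overline M|\log\e|}\Bigl\{\sup_{\Delta\in[0,1]}\Bigl|\sqrt\e\int_k^{k+\Delta}\sigma(\Tilde X_s^\e)\,dW_s\Bigr|<\e^\alpha\Bigr\}
\]
applies Gronwall \emph{separately on each unit slow-time interval}, so the hyperbolic amplification factor is only $e^L$ per step. Combined with the derivative bound $|\nabla T|\leq C/H\leq C\e^{-A}$ from~\eqref{derv1}--\eqref{derv2}, the deterministic exit-time functional drops by at least $1/2$ per step, and the cumulative $H$-drift over $O(|\log\e|)$ steps is $O(\e^\alpha|\log\e|)=o(\e^A)$. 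Thus the obstacle you identify---global Gronwall blowing up---is circumvented not by switching coordinates but by restarting the comparison. The threshold $A<1/2$ enters as the requirement that such an $\alpha$ exist.

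Your alternative, tracking $H$ and $\nu$ (or $\theta$) separately, is also valid and arguably makes the role of $A<1/2$ more transparent. One point to tighten: the sentence ``the martingale perturbation of $\nu$ is $O(\sqrt{\e|\log\e|})\ll l$, so $\nu_t$ reaches $+l$'' is not quite a proof, because the drift of $\nu$ feeds back through $\mu_t=\sqrt{H+\nu_t^2}$. The clean fix is exactly your $\theta=\sinh^{-1}(\nu/\sqrt{0.9\e^A})$ substitution, but applied via It\^o rather than only to the deterministic lower-bound ODE: the resulting $\theta$-martingale has quadratic variation $O(\e^{1-A}|\log\e|)=o(1)$ and the It\^o correction is $O(\e^{1-A}|\log\e|)=o(1)$, so $\theta_t\geq\theta_0+c\e^{\beta-1}t-o(1)$ on the good event, which gives the hitting time directly. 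With that adjustment your argument is complete.
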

\begin{proof}
The proof is similar to that of Lemma~\ref{escapetime}. Let us look at the slow motion:
$$    d\Tilde X_t^\e=\nabla^{\perp}H(\Tilde X_t^\e )dt+\sqrt\e\sigma(\Tilde X_t^\e)d\Tilde{W_t},\ \ \ \ \ \ \Tilde X_0^\e=x_0\in\mathbb R^2,$$
and take $\alpha\in(A,1/2)$. Then the idea is that $\tau<T$ on the event
$$E:=\bigcap_{k=0}^{2A\overline M|\log\e|}\left\{\sup_{\Delta\in[0,1]}|\sqrt{\e}\int_k^{k+\Delta}\sigma(\Tilde X^\e_s)dW_s|<\e^\alpha\right\},$$
and ${\bm{\mathrm P}}(E)\geq 1-2A\overline M|\log\e|\exp(-\e^{\alpha-1/2})\geq1/2$ for $\e$ sufficiently small, since $\sigma$ is bounded and the stochastic integral can be represented as a time changed Brownian motion. See the proof of Lemma~\ref{escapetime} for details.
\end{proof}

\begin{lemma}
\label{separatrix}
Suppose that $O_k$ is an interior vertex. For any $T>0$, $c>0$, there exist $\delta>0$ and $\e_0>0$ such that, for every $\e<\e_0$ and each $x_0\in C_k$,
$${\bm{\mathrm P}}(\tau\leq T)\geq\exp{(-c\e^{-\beta})},$$where $\tau=\inf\{t:X_t^\e\in C_{ki}(\delta)\}.$
\end{lemma}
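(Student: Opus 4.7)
The plan is to chain together the five-step scheme spelled out in the enumerated list preceding this subsection, using the strong Markov property to multiply the resulting lower bounds. The trajectory from $C_k$ down to $C_{ki}(\delta)$ is decomposed into a finite collection of sub-trips, each of which is covered by one of the auxiliary lemmas already proved (or cited) above, and the time budget $T$ is divided among them.

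First, starting from $x_0\in C_k$, I invoke Theorem 3.2 of \cite{Hairer2016AFK} (Step 1 of the list) to conclude that, for a small positive $d$ to be fixed momentarily, within a short deterministic time the process enters $D_i(\tfrac{9}{10}\e^{\frac{1+d}{2}\beta},\tfrac{11}{10}\e^{\frac{1+d}{2}\beta})$ with probability bounded away from zero uniformly in $\e$. Once there, Lemma \ref{gammaout} (with $A=\tfrac{1+d}{2}\beta<\tfrac12$) guarantees that the process hits the piece $\gamma_{out}\cap D_i(\tfrac{9}{10}\e^{\frac{1+d}{2}\beta},\tfrac{11}{10}\e^{\frac{1+d}{2}\beta})$ with probability at least $1/2$ in a further bounded amount of time. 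Then Lemma \ref{firststep} is applied to reach the level $C_{ki}(\e^{\frac{1-d}{2}\beta})$ with probability at least $\exp(-c_1\e^{-\beta})$.

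From this point on, I iterate. Setting $a_0=\tfrac{1-d}{2}$ and $a_{j+1}=a_j-d$, I alternate Lemma \ref{gammaout} (to restore the process to $\gamma_{out}$ in the correct annulus) with Lemma \ref{steps} (to drop the level from $\e^{a_j\beta}$ to $\e^{(a_j-d)\beta}$). Since each step reduces $a_j$ by the fixed amount $d>0$, after a number $N$ of steps depending only on $d$ (in particular independent of $\e$) one reaches a level $C_{ki}(\e^{a_N\beta})$ with $a_N\leq d$. One last application of Lemma \ref{gammaout} followed by Lemma \ref{laststep} brings the process from there down to $C_{ki}(\delta)$, where $\delta$ is the constant supplied by Lemma \ref{laststep}.

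The total probability is the product of $2N+O(1)$ lower bounds. Each of the Lemmas \ref{firststep}, \ref{steps}, \ref{laststep} can be applied with any prescribed exponential constant; choosing each constant to be $c/(2N+3)$ and dividing $T$ into $2N+3$ equal sub-intervals, the strong Markov property at the successive hitting times yields
\[
{\bm{\mathrm P}}(\tau\leq T)\;\geq\;\bigl(\tfrac12\bigr)^{N+1}\exp\!\bigl(-c\e^{-\beta}\bigr)\;\geq\;\exp\!\bigl(-c\e^{-\beta}\bigr)
\]
for all sufficiently small $\e$, since $N$ is a fixed integer and the polynomial prefactor is absorbed into the exponential. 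The main bookkeeping obstacle is to ensure that consecutive lemmas match at their interfaces — namely, that after each hitting of $C_{ki}(\e^{a_j\beta})$ the subsequent application of Lemma \ref{gammaout} starts exactly where that lemma requires ($x_0\in C_{ki}(\e^{a_j\beta})$), and that the resulting hit of $\gamma_{out}\cap D_i(\tfrac{9}{10}\e^{a_j\beta},\tfrac{11}{10}\e^{a_j\beta})$ is precisely the configuration needed by the next invocation of Lemma \ref{steps}. Since $N$ is bounded, the cumulative time budget and the cumulative probability loss both remain under control.
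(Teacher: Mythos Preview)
Your proposal is correct and follows essentially the same approach as the paper's proof: both chain together the five-step scheme (Theorem 3.2 of \cite{Hairer2016AFK}, then Lemmas \ref{gammaout}, \ref{firststep}, \ref{steps}, \ref{laststep}) via the strong Markov property, dividing the time budget $T$ and the exponential constant $c$ among a fixed finite number of sub-steps. The paper's version is terser --- it simply fixes $d=\tfrac{1/\beta-1}{4}\wedge\tfrac14$ and $n=[\tfrac1d]+8$ up front and then cites the lemmas --- whereas you spell out the iteration and the interface matching more explicitly, but the substance is identical.
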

\begin{proof}
Set $d=\frac{1/\beta-1}{4}\wedge\frac{1}{4}$ and $n=[\frac{1}{d}]+8$. By Theorem 3.2 in \cite{Hairer2016AFK}, for every $d>0$, the process reaches $C_{ki}(\e^{\frac{1+d}{2}\beta})$ by time $T/n$ with probability at least $p>0$, where $p$ is independent of $\e$. Then we apply Lemma~\ref{steps}, Lemma~\ref{firststep}, Lemma~\ref{laststep}, and Lemma~\ref{gammaout} with our fixed $d$, $\frac{T}{2n}$, and $\frac{c}{2n}$. By the strong Markov property of $X_t^\e$, we get the desired result.
\end{proof}
As in Lemma~\ref{separatrix}, the process starting at $C_{kj}(\delta)$ arrives at the separatrix with probability at least $\exp(-c\e^{-\beta})$ prior to time $T$. Combining these two results, we get the following lemma.
\begin{lemma}
\label{separatrix1}
Suppose that $O_k$ is an interior vertex with $O_k\sim I_i$ and $O_k\sim I_j$. Then, for any $T>0$, $c>0$, there exists $\delta$ such that, for any $x_0\in C_{kj}(\delta)$, $${\bm{\mathrm P}}(\tau\leq T)\geq\exp{(-c\e^{-\beta})},$$where $\tau=\inf\{t:X_t^\e\in C_{ki}(\delta)\}.$
\end{lemma}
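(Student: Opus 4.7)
The plan is to link this lemma to Lemma~\ref{separatrix} via the strong Markov property, after first establishing the mirror estimate that the process can travel inward from $C_{kj}(\delta)$ down onto the separatrix $C_k$ in time $T/2$ with probability at least $\exp(-\tfrac{c}{2}\e^{-\beta})$. Setting $\tau_1 := \inf\{t\geq0 : X_t^\e \in C_k\}$, the final bound will follow from
$$ {\bm{\mathrm P}}_{x_0}(\tau \leq T) \;\geq\; {\bm{\mathrm P}}_{x_0}(\tau_1 \leq T/2)\;\cdot\;\inf_{y\in C_k}{\bm{\mathrm P}}_y\bigl(\tau \leq T/2\bigr), $$
where the second factor is controlled by applying Lemma~\ref{separatrix} with parameters $T/2$ and $c/2$ (shrinking $\delta$ if necessary so that the value produced by Lemma~\ref{separatrix} is the one in the conclusion).

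To prove the inward estimate, I would run the chain of Lemmas~\ref{laststep}, \ref{steps}, \ref{firststep} in the reverse direction along a finite sequence of layers $\delta \to \e^{d\beta} \to \e^{2d\beta} \to \cdots \to \e^{\frac{1-d}{2}\beta} \to \e^{\frac{1+d}{2}\beta}$ (for $d=\tfrac{1/\beta-1}{4}\wedge\tfrac14$ and $n = [1/d]+8$ stages, as in the proof of Lemma~\ref{separatrix}). On each layer I would apply Itô's formula to $f(H(X_t^\e))$ with $f(x)=x\log x - x$ on the stopping time $\eta = \tau \wedge \tau'$ (where $\tau$ hits the inner boundary and $\tau'$ the outer), and force the process to cross the inner boundary first using the same time-changed Brownian motion events from Lemma~\ref{brownianatsaddle}. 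Crucially, Lemma~\ref{brownianatsaddle} is already stated with one-sided bounds of the form $W<-A$, so no new Brownian estimate is needed; only the event $R$ in each auxiliary lemma must be re-chosen with the sign convention adjusted so that the stochastic integral now drives $f(H(X_\eta^\e))$ below the inner-boundary threshold rather than above the outer one. The final descent from $C_{kj}(\e^{\frac{1+d}{2}\beta})$ onto $C_k$ itself is obtained by invoking Theorem 3.2 of \cite{Hairer2016AFK}, which gives a uniform-in-$\e$ positive probability of hitting the separatrix from such a thin neighborhood in bounded time (this is the same ingredient used as Step 1 in the proof of Lemma~\ref{separatrix}).

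Once each inward layer is crossed with probability at least $\exp(-\tfrac{c}{2n}\e^{-\beta})$ in time at most $T/(2n)$, iterating the strong Markov property across the $n$ layers gives
$$ {\bm{\mathrm P}}_{x_0}\bigl(\tau_1 \leq T/2\bigr)\;\geq\;\exp\bigl(-\tfrac{c}{2}\e^{-\beta}\bigr), $$
and combining with Lemma~\ref{separatrix} at $\tau_1$ via strong Markov yields the claim.

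The main obstacle, as I see it, is the bookkeeping for the inward analogs: one must verify that each Itô identity together with the sign-flipped barrier comparison still works when the inner layer is at $\e^{a\beta}$ (larger) and the outer one is at $\e^{(a-d)\beta}$ reversed in role, and that the intermediate $\gamma_{in}$ analog of Lemma~\ref{gammaout} (hitting the appropriate piece of $\partial V$ pointing into the saddle neighborhood with uniformly positive probability) holds by the same slow-motion perturbation argument. All other ingredients, most notably the logarithmic exit-time estimates and the escape-time lemmas, transfer verbatim because they only depend on $|\log H(X_s^\e)|^2 |\nabla H^*\sigma|^2$ along the trajectory, a quantity insensitive to the direction in which the level of $H$ is crossed.
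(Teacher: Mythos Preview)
Your approach is essentially the same as the paper's: the paper reduces Lemma~\ref{separatrix1} to two ingredients, namely (i) an inward estimate ``as in Lemma~\ref{separatrix}'' showing that the process starting on $C_{kj}(\delta)$ reaches the separatrix $C_k$ with probability at least $\exp(-c\e^{-\beta})$ before time $T$, and (ii) Lemma~\ref{separatrix} itself, combined via the strong Markov property. Your proposal simply makes explicit what the paper leaves implicit in the phrase ``as in Lemma~\ref{separatrix}'', by running the layered scheme of Lemmas~\ref{firststep}, \ref{steps}, \ref{laststep} in the inward direction and closing the last gap with Theorem~3.2 of \cite{Hairer2016AFK}; this is exactly the symmetry the paper is invoking.
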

Next, we formulate the lower bound for the probability that the random process goes through an interior vertex, near a certain deterministic path.

\begin{lemma}
\label{separatrix3}
Suppose that $O_k$ is an interior vertex with $O_k\sim I_i$ and $O_k\sim I_j$.
For every $\delta>0$, $\gamma>0$, $h>0$, and a trajectory ${\bm{\varphi}}$ such that ${\bm{\varphi}}_0=(j,H({\bm{\mathrm x}}_k)\pm\delta)$,  ${\bm{\varphi}}_T=(i,H({\bm{\mathrm x}}_k)\pm\delta)$, and $|\varphi_t-H({\bm{\mathrm x}}_k)|\leq\delta$ for $t\in[0,T]$,
there exist $h_0>0$ and $\e_0>0$ such that, for any $x_0$ satisfying $r(Y(x_0),{\bm{\varphi}}_0)<h_0$, we have the estimate for all $\e<\e_0$
$${\bm{\mathrm P}}\left(r(Y(X_T^\e),{\bm{\varphi}}_T)<h,\rho_{0T}(Y(X_t^\e),{\bm{\varphi}}_t)<2\delta\right)\geq\exp(-\e^{-\beta}(S_{0T}({\bm{\varphi}})+\gamma)).$$
\end{lemma}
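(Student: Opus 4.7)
The plan is to decompose $\bm{\varphi}$ into three phases according to its distance from $O_k$, apply Lemma~\ref{loclem} on the two ``away-from-vertex'' phases, invoke Lemma~\ref{separatrix1} on the middle ``vertex-crossing'' phase, and glue the three estimates via the strong Markov property.

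Given $\delta,\gamma,h$, first fix $\gamma_1,\gamma_2,\gamma_3>0$ with $\gamma_1+\gamma_2+\gamma_3<\gamma$, then apply Lemma~\ref{separatrix1} with constant $c=\gamma_2$ and time horizon $T$ to obtain $\delta'\in(0,\delta)$, and choose an auxiliary tolerance $h'>0$ much smaller than $\delta-\delta'$. Define $T_1:=\inf\{t\in[0,T]:i_t=j,\ |\varphi_t-H(\bm{\mathrm x}_k)|=\delta'\}$ and $T_2:=\sup\{t\in[0,T]:i_t=i,\ |\varphi_t-H(\bm{\mathrm x}_k)|=\delta'\}$; continuity of $\bm{\varphi}$ combined with its prescribed endpoint values makes both well defined with $0<T_1\leq T_2<T$, and on $[0,T_1]$ (respectively $[T_2,T]$) the trajectory stays in $I_j$ (respectively $I_i$), bounded away from $O_k$ by $\delta'$.

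Apply Lemma~\ref{loclem} to $\bm{\varphi}|_{[0,T_1]}$ with path tolerance $\delta$, action tolerance $\gamma_1$, and endpoint slack $h'$, to obtain $\exp(-\e^{-\beta}(S_{0T_1}(\bm{\varphi})+\gamma_1))$ as a lower bound for the probability that $|H(X_{T_1}^\e)-\varphi_{T_1}|<h'$ and $\sup_{t\leq T_1}r(Y(X_t^\e),\bm{\varphi}_t)<\delta$. Conditioning on $\mathcal F_{T_1}$ and invoking Lemma~\ref{separatrix1} (or a minor extension allowing starting points in a thin neighborhood of $C_{kj}(\delta')$), obtain a stopping time $\tau\leq T_2-T_1$ at which $X_{T_1+\tau}^\e\in C_{ki}(\delta')$ with conditional probability $\geq\exp(-\e^{-\beta}\gamma_2)$; the chain of lemmas underlying Lemma~\ref{separatrix1} (Lemmas~\ref{steps}, \ref{firststep}, \ref{laststep}, and \ref{gammaout}) keeps $|H(X_t^\e)-H(\bm{\mathrm x}_k)|<\delta$ throughout the crossing, which together with $|\varphi_t-H(\bm{\mathrm x}_k)|\leq\delta$ and the triangle inequality on $\Gamma$ through $O_k$ yields $r(Y(X_t^\e),\bm{\varphi}_t)<2\delta$ on $[T_1,T_1+\tau]$. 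Finally, apply the strong Markov property at $T_1+\tau$ and Lemma~\ref{loclem} on the remaining interval to the auxiliary trajectory $\tilde\varphi$ which equals $\varphi_{T_2}$ on $[T_1+\tau,T_2]$ and equals $\bm{\varphi}|_{[T_2,T]}$ afterwards, with path tolerance $\delta-\delta'$ and endpoint slack $h$; the flat piece contributes $0$ to the action so $S(\tilde\varphi)=S_{T_2T}(\bm{\varphi})$, yielding probability $\geq\exp(-\e^{-\beta}(S_{T_2T}(\bm{\varphi})+\gamma_3))$, and the triangle inequality $r(Y(X_t^\e),\bm{\varphi}_t)\leq r(Y(X_t^\e),\tilde\varphi_t)+r(\tilde\varphi_t,\bm{\varphi}_t)<(\delta-\delta')+(\delta+\delta')=2\delta$ closes the path estimate. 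Multiplying the three lower bounds and using $S_{0T_1}(\bm{\varphi})+S_{T_2T}(\bm{\varphi})\leq S_{0T}(\bm{\varphi})$ together with $\gamma_1+\gamma_2+\gamma_3<\gamma$ produces the desired bound.

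I expect the main obstacle to be the handoff between the first step and the crossing step: after the first step the process lies only in an $h'$-neighborhood of $C_{kj}(\delta')$ rather than exactly on $C_{kj}(\delta')$, whereas Lemma~\ref{separatrix1} is stated for points lying on $C_{kj}(\delta')$ itself. This should be resolvable by re-examining the proof of Lemma~\ref{separatrix1}, whose constituent lemmas actually apply to starting points in an open shell around the separatrix, so that the exponential lower bound survives a small perturbation of the initial position; alternatively, one may insert a brief deterministic waiting interval at the end of the first step and invoke a short-time hitting argument (of the type used in Lemma~\ref{gammaout}) to drive the process onto $C_{kj}(\delta')$ with overwhelming probability before passing to the strong Markov step.
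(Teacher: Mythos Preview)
Your three–phase decomposition (edge $I_j$ / crossing / edge $I_i$) and the use of Lemma~\ref{loclem} on the outer phases with the separatrix lemmas in the middle is exactly the skeleton the paper uses. However, there are two points where the paper's argument is more careful and your sketch has genuine gaps.

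\textbf{Timing of the crossing.} You invoke Lemma~\ref{separatrix1} with time horizon $T$ to produce $\delta'$, then later assert ``obtain a stopping time $\tau\le T_2-T_1$''. But Lemma~\ref{separatrix1} with horizon $T$ only yields $\tau\le T$; to get $\tau\le T_2-T_1$ you would have to apply the lemma with horizon $T_2-T_1$, which in turn depends on $\delta'$, which depends on the horizon --- a circularity. The paper breaks this loop by first fixing a small $\hat t$ depending only on the uniform continuity of $\varphi$ (so that $|a-b|<\hat t\Rightarrow|\varphi_a-\varphi_b|<h/2$), then choosing $\delta'$ from the crossing lemma with horizon $\hat t$, and then, after crossing in time $<\hat t$, following the \emph{time-shifted} trajectory $\tilde\varphi_t=\varphi_{t-\hat t}$ on $[t_2+\hat t,T]$. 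The shift costs at most $h/2$ at the terminal point and nothing in the action, so the endpoint estimate still closes. Your flat-then-original auxiliary trajectory does not provide this decoupling.

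\textbf{Handoff at the start of the crossing.} You correctly flag that after phase one the process is only near $C_{kj}(\delta')$, not on it. The paper's fix is not the one you propose: it sets the first-phase endpoint at level $\delta'/2$ (defining $t_1=\inf\{t:\varphi_t=\pm\delta'/2\}$) and runs Lemma~\ref{loclem} with path tolerance $\delta'/2$. Since $|\varphi_0|=\delta>\delta'$ and $|H(X^\e_{t_1})|<\delta'$ on this event, continuity forces the process to hit $C_{kj}(\delta')$ exactly at some stopping time $\tau_1\le t_1$, so the crossing lemma can be applied verbatim at $\tau_1$. In addition, the confinement $X_t^\e\in D_k(\pm\delta)$ during the crossing is not a by-product of Lemmas~\ref{steps}--\ref{gammaout} as you suggest; the paper argues it separately, noting that escape from $D_k(\pm\delta/2)$ in time $\hat t$ has probability $\le\exp(-C\e^{-\beta})$ for some $C$, and then chooses $\delta'$ small enough that the crossing probability dominates.
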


\begin{proof}
Without loss of generality, we assume that $H({\bm{\mathrm x}}_k)=0$ and $h<\delta$. Due to the absolute continuity of $\varphi$, we can fix $\hat t$ so small that $|a-b|<\hat t$ implies that $|\varphi_a-\varphi_b|<h/2$.  
Fix $\delta'<\delta/2$ positive and small, such that for any $x_0\in C_{kj}(\delta')$, 
\begin{equation}
\label{separatrix3eq:1}
    {\bm{\mathrm P}}(\tau_{\delta'}\leq \hat t;\ X_t^\e\in D_k(\delta/2),\ t\in[0,\tau_{\delta'}])\geq\exp{(-\gamma\e^{-\beta}/4)},
\end{equation}
where $\tau_{\delta'}=\inf\{t:X_t^\e\in C_{ki}(\delta')\}.$ 
This is possible since, for $\delta'$ small enough, the process starting on $C_{kj}(\delta')$ escapes $D_k(\delta/2)$, within a fixed time, with a probability that is  bounded from above by $\exp(-C\e^{-\beta})$ for certain constant $C$, while the constant $c$ in Lemma~\ref{separatrix1} corresponding to $\delta'$ can be chosen such that $c<C$. 

Let $t_1=\inf\{t>0:\varphi_t=\pm\delta'/2\}$, $t_2=\sup\{t>0:\varphi_t=\pm\delta'\}$, and $\Tilde\varphi_t=\varphi_{t-\hat t}$ for $t\in[t_2+\hat t, T]$.
\begin{figure}[!ht] 
    \centering
    \includegraphics[width=0.6\textwidth]{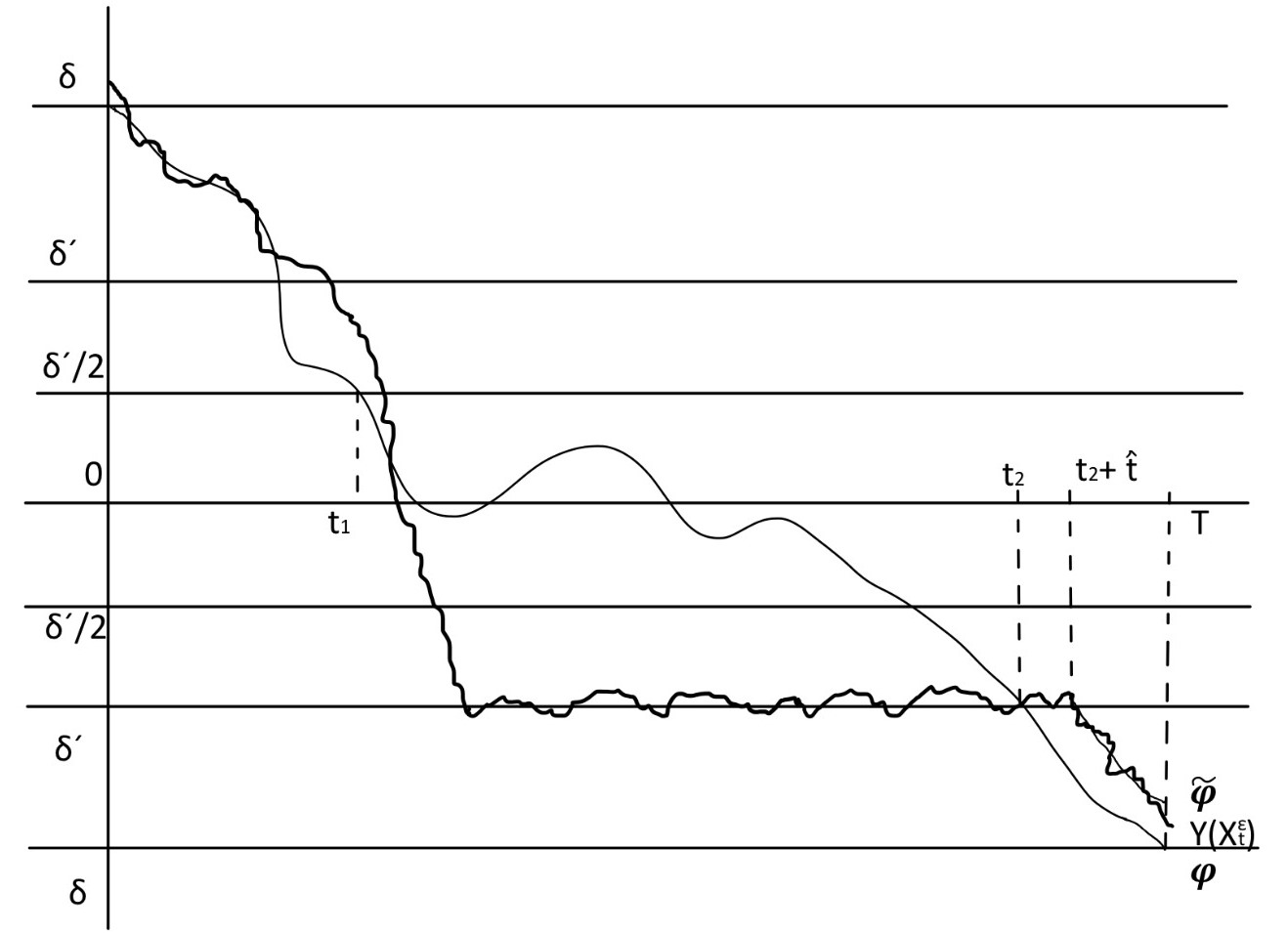}
    \caption{A typical graph where event $A$ happens.}
    \label{throughseparatrix}
\end{figure}
By Lemma~\ref{loclem}, we can fix $h_0<\delta$ such that, for any $x_0$ satisfying $|H(x_0)-\varphi_0|<h_0$,
\begin{equation}
\label{separatrix3eq:2}
{\bm{\mathrm P}}( \rho_{0,t_1}(H(X_t^\e),\varphi_t)<\delta'/2)\geq\exp\left[-\e^{-\beta}(S_{0,t_1}(\varphi)+\gamma/4)\right],
\end{equation}
and we can fix $\mu<\delta'$ such that, conditioned on $X^\e_{t_2+\hat t}=x$ satisfying $|H(x)-\Tilde\varphi_{t_2+\hat t}|<\mu$,
\begin{equation}
\label{separatrix3eq:3}
    {\bm{\mathrm P}}( \rho_{t_2+\hat t,T}(H(X_t^\e),\Tilde\varphi_t)<h/2)\geq\exp\left[-\e^{-\beta}(S_{t_2,T-\hat t}(\varphi)+\gamma/4)\right],
\end{equation} where $\Tilde\varphi_t=\varphi_{t-\hat t}$. Furthermore, let $\tau_1=\inf\{t:X_t^\e\in C_{kj}(\delta')\}$ and $\tau_2=\inf\{t:X_t^\e\in C_{ki}(\delta')\}$.
Let's consider the event:
$$A=\begin{Bmatrix}\rho_{0,t_1}(H(X_t^\e),\varphi_t)<\delta'/2;\tau_2-\tau_1<\hat t,\ X_t^\e\in D_k(\delta),\ t\in[\tau_1,\tau_2];\\
X_t^\e\in D_i(\delta'-\mu,\delta'+\mu), t\in[\tau_2,t_2+\hat t ];\ |H(X_t^\e)-\Tilde\varphi_t|<h/2, t\in[t_2+\hat t,T]\end{Bmatrix}.$$
Note that $A$ implies the desired event, hence it suffices to prove ${\bm{\mathrm P}}(A)\geq\exp(-\e^{-\beta}S_{0T}({\bm{\varphi}})+\gamma).$ To see this, combine \eqref{separatrix3eq:1}, \eqref{separatrix3eq:2} and \eqref{separatrix3eq:3} and note that \eqref{eq:4} implies that $${\bm{\mathrm P}}\left(X_t^\e\in D_i(\delta'-\mu,\delta'+\mu), t\in[\tau_2,t_2+\hat t ]\right)\geq\frac{1}{2},$$ for $\e$ small enough. Thus, by the strong Markov property of $X_t^\e$,
\begin{align*}
    {\bm{\mathrm P}}(A)\geq\exp(-\e^{-\beta}(S_{0T}({\bm{\varphi}})+\gamma)).
\end{align*}

\end{proof}

\subsection{Local Large Deviation Principle: Lower Bound}
With all the preliminary results obtained, we can get the lower bound for the large deviation principle. 
\begin{lemma}
For any $\delta>0$, $\gamma>0$, and $\bm{\varphi}\in\textbf{C}\left([0,T],\Gamma\right)$ with $\bm{\varphi_0}=Y(x_0)$, there exists $\e_0$ such that
 $${\bm{\mathrm P}}\left(\rho_{0,T}(Y(X_t^\e),\bm{\varphi})<\delta\right)\geq\exp\left[-\e^{-\beta}(S(\bm{\varphi})+\gamma)\right],$$for any $\e<\e_0$.
\label{case1}
\end{lemma}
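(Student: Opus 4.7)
The plan is to chain together the three previously established local estimates: \lemref{lowerbound} for segments entirely inside one edge, \lemref{extreme} for segments launched from an exterior vertex, and \lemref{separatrix3} for traversals of interior vertices. First I would reduce to $S(\bm{\varphi})<\infty$, since otherwise the bound is trivial. Absolute continuity of $\varphi$ together with $\dot\varphi/B_{i_t}(\varphi)\in L^2$ then gives the regularity needed to subdivide. I would next replace $\bm{\varphi}$ by a tame approximant $\tilde{\bm{\varphi}}$ with $\rho_{0,T}(\bm{\varphi},\tilde{\bm{\varphi}})<\delta/3$ and $S(\tilde{\bm{\varphi}})\leq S(\bm{\varphi})+\gamma/3$, arranged so that there are only finitely many times $0=t_0<t_1<\dots<t_K=T$ at which $\tilde{\bm{\varphi}}$ sits on a vertex, each interior-vertex visit is a single crossing of short duration $\hat t$, and on each interval $(t_{k-1},t_k)$ either (a) $\tilde{\bm{\varphi}}$ stays in the interior of a single edge uniformly away from its endpoints, (b) $\tilde{\bm{\varphi}}$ is an initial escape from the exterior vertex $Y(x_0)$, or (c) $\tilde{\bm{\varphi}}$ crosses an interior vertex. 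This smoothing is possible because $B^2$ vanishes at vertices but is bounded below away from them, so any sojourn at or near a vertex contributes negligibly to the action and can be compressed to a brief transit.

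Once $\tilde{\bm{\varphi}}$ is fixed, I would handle the segments in order using the strong Markov property. For type-(a) intervals apply \lemref{lowerbound} with tolerance $h'_k$ at the right endpoint chosen small enough to match the required starting tolerance $h_0$ (or $h$) of the lemma used on the next interval. For the type-(b) initial interval (if the initial vertex is exterior) apply \lemref{extreme} to push the process out to distance $\rho$, then pick up the trajectory with \lemref{lowerbound}. For type-(c) intervals apply \lemref{separatrix3} with its tolerance $h_0$ matched to $h'_{k-1}$ from the preceding edge segment. Each application delivers a lower bound of the form $\exp[-\e^{-\beta}(S_{t_{k-1},t_k}(\tilde{\bm{\varphi}})+\gamma/(3K))]$, where the contribution of vertex segments to the action is handled by the built-in $\gamma$-slack in \lemref{extreme} and \lemref{separatrix3}. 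Multiplying, the total probability is bounded below by
\[
\exp\!\left[-\e^{-\beta}\!\left(\sum_{k=1}^K S_{t_{k-1},t_k}(\tilde{\bm{\varphi}})+\tfrac{\gamma}{3}\right)\right]\ge\exp\!\left[-\e^{-\beta}\!\left(S(\bm{\varphi})+\tfrac{2\gamma}{3}\right)\right],
\]
and since the event $\rho_{0,T}(Y(X_t^\e),\tilde{\bm{\varphi}})<\delta/3$ implies $\rho_{0,T}(Y(X_t^\e),\bm{\varphi})<\delta$ by construction, the result follows.

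The principal difficulty is Step 2, the construction of $\tilde{\bm{\varphi}}$. A generic absolutely continuous $\varphi$ may touch a vertex on a set of positive measure or oscillate across it infinitely often, in which case the natural partition has no finite refinement satisfying the separation hypotheses of \lemref{lowerbound} and \lemref{separatrix3}. The remedy is to exploit that $S(\bm{\varphi})<\infty$ forces $\dot\varphi=0$ a.e.\ on $\{t:\varphi_t=H(\bm{\mathrm x}_k)\}$ for each vertex $\bm{\mathrm x}_k$ (since $B_{i_t}(\varphi_t)=0$ there by our convention), so intervals of vertex residence contribute nothing to the action and can be contracted to a point. Infinitely many vertex touches with nontrivial excursions between them can be absorbed into finitely many genuine crossings by truncation in an $L^2$ sense, using continuity of $\varphi$ and a standard diagonal argument; the remaining excursions fit inside the tolerance $\delta/3$. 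Carrying this approximation out carefully, and verifying that each lemma's quantitative tolerance chain $(h_0,h,h')$ can be threaded consistently across all $K$ segments, is the bulk of the technical work.
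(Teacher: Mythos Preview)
Your overall strategy---chaining \lemref{lowerbound}, \lemref{extreme}, and \lemref{separatrix3} via the strong Markov property, with tolerances threaded so that each segment's exit window feeds the next segment's entry hypothesis---is exactly the paper's. The difference is in how the partition is produced. You propose to first replace $\bm{\varphi}$ by a tame approximant $\tilde{\bm{\varphi}}$ with only finitely many, well-separated vertex interactions, and you rightly flag this construction (controlling both $\rho_{0,T}$ and $S$ simultaneously) as the principal difficulty. The paper sidesteps this step entirely by working directly with $\bm{\varphi}$ through a \emph{two-level} partition: fix a small $\delta'>0$ and define inductively $t_k=\inf\{t>q_{k-1}:r(\bm{\varphi}_t,O)=\delta'\text{ for some vertex }O\}$, $p_k=\sup\{t<t_k:r(\bm{\varphi}_t,O)=2\delta'\}$, $q_k=\inf\{t>t_k:r(\bm{\varphi}_t,O)=2\delta'\}$. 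Absolute continuity of $\varphi$ forces this to terminate after finitely many steps, since each cycle consumes at least $\delta'$ of total variation. On $[q_{k-1},p_k]$ the trajectory $\bm{\varphi}$ itself stays $\geq\delta'$ away from every vertex, so \lemref{lowerbound} applies to it directly. On $[p_k,q_k]$ one merely checks whether $\bm{\varphi}_{p_k}=\bm{\varphi}_{q_k}$ (no crossing: the process simply holds position with probability $\geq 1/2$ for small $\e$, which suffices since $\bm{\varphi}$ stays within $2\delta'$ of the vertex there) or $\bm{\varphi}_{p_k}\neq\bm{\varphi}_{q_k}$ (genuine crossing: apply \lemref{separatrix3}). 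This buys you the finiteness of the partition for free and eliminates the approximation step; the cost is that on no-crossing near-vertex intervals you do not track $\bm{\varphi}$ at all, but that slack is absorbed by the $\delta$-tolerance and only improves the lower bound. Your route would also work, but the two-scale $(\delta',2\delta')$ device makes what you call ``the bulk of the technical work'' disappear.
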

\begin{proof}
 We can assume that $\varphi$ is absolutely continuous and $\varphi'\in L^2([0,T])$ for the rest of the proof, because $S(\bm{\varphi})=\infty$ for any other $\bm{\varphi}=(i,\varphi)$. 
 Consider the case where $\bm{\varphi}_0$ is at an exterior vertex $O_k$ and $\bm{\varphi}_T$ is not at any vertex. 
 Given $\delta>0$ and $\gamma>0$, by Lemma~\ref{extreme}, we can find $\rho<\delta/4\wedge \inf_{k}|\varphi_T-H(\bm{\mathrm x}_k)|$ such that, for $\e$ sufficiently small,
 \begin{equation}
 \label{case1eq:1}
     {\bm{\mathrm P}}(\tau<q)\geq\exp(-\e^{-\beta}\gamma/4),
 \end{equation} where $q=\inf\{t:r(\bm{\varphi}_t,O_k)=\rho\}$ and $\tau=\inf\{t:|H(X_t^\e)-H(\bm{\mathrm x}_k)|=\rho\}$. Let $\delta'=\frac{1}{2}\rho$, and define the following three sequences inductively:
\begin{align*}
    q_0&=q;\\
    t_k&=\inf\left\{t>q_{k-1}:r(\bm{\varphi}_t,O)=\delta'\text{ for some vertex }O\right\},\\
    p_k&=\sup\left\{t<t_k:r(\bm{\varphi}_t,O)=2\delta'\text{ for some vertex }O\right\},\\
    q_k&=\inf\left\{t>t_k:r(\bm{\varphi}_t,O)=2\delta'\text{ for some vertex }O\right\},
\end{align*}
\begin{figure}[!ht] 
    \centering
    \includegraphics[width=0.6\textwidth]{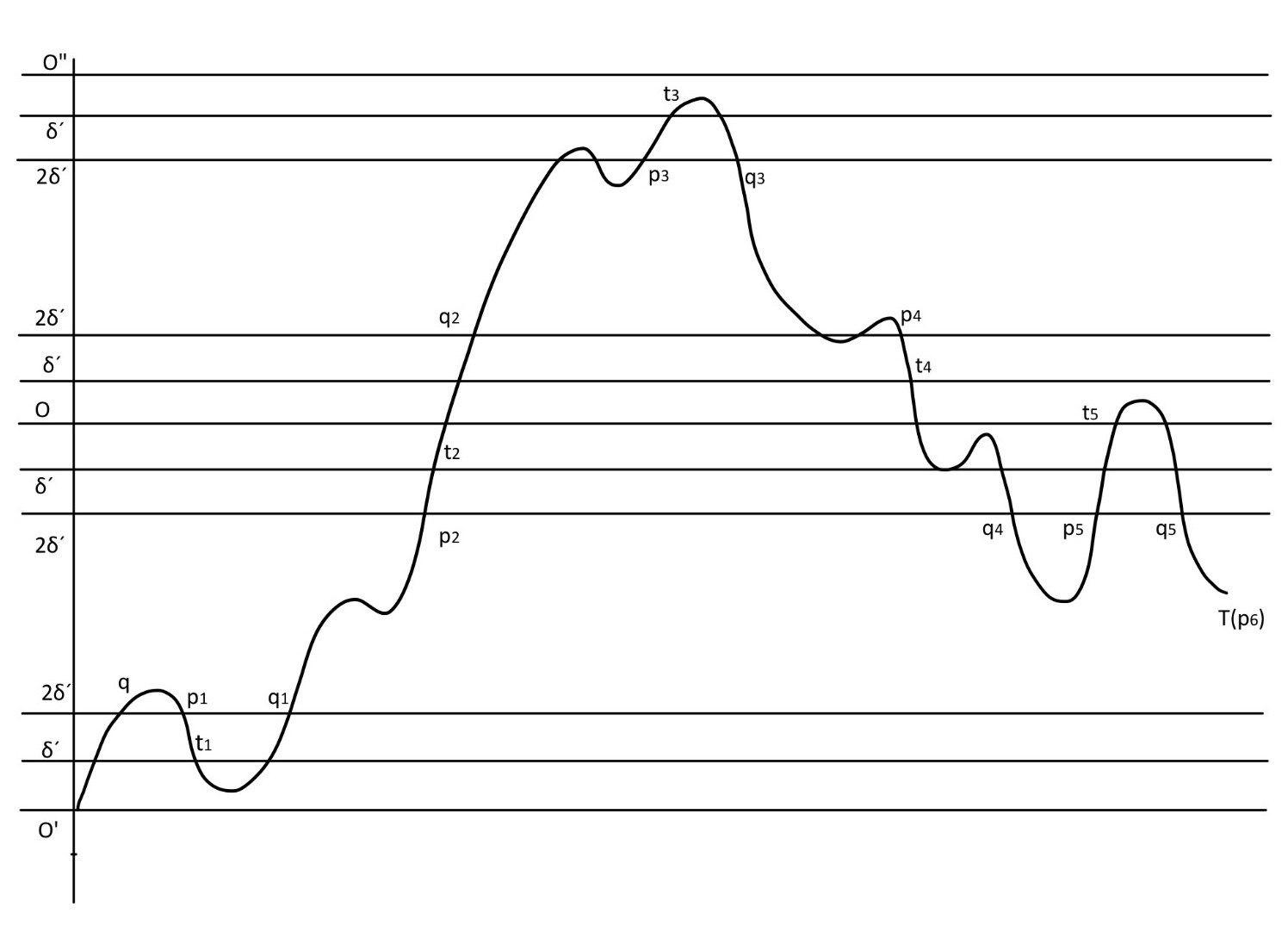}
    \caption{A typical partition of a trajectory.}
    \label{partition}
\end{figure}for $k\leq n$, where $n$ is such that the set $\left\{t>q_n:r(\bm{\varphi}_t,O)=\delta'\text{ for some vertex }O\right\}$ is empty. Note that $n\geq 0$ is finite due to the absolutely continuity of $\varphi$. We define $p_{n+1}=T$.
By Lemma \ref{loclem}, we can find $h<\frac{1}{2}\delta'$ such that, for all $h'<h$, there exists $\e_0$ small enough such that, conditioned on $X_{q_{k-1}}^\e=x$ satisfying $|H(x)-\varphi_{q_{k-1}}|<h$ and for all $\e<\e_0$,
\begin{align*}
&{\bm{\mathrm P}}\left(|H(X_{p_{k}}^\e)-\varphi_{p_{k}}|<h',\ \rho_{q_{k-1},p_{k}}(Y(X_t^\e),\bm{\varphi}_t)<\delta\right)
\geq\exp\left[-\e^{-\beta}(S_{q_{k-1},q_{k}}(\bm{\varphi})+\frac{\gamma}{2^{k+2}})\right]
\end{align*}
holds for all $1\leq k\leq n+1$. By the strong Markov property of $X_t^\e$ (since the process can remain near the same level curve between time $\tau$ and $q$), for each $h'>0$, there exists $\e_0$ such that, for all $\e<\e_0$,
$${\bm{\mathrm P}}(|H(X_{q}^\e)-\varphi_{q}|<h',\ \rho_{0,q}(Y(X_t^\e),\bm{\varphi}_t)<\delta)\geq\exp(-\e^{-\beta}\gamma/2).$$
On the other hand, for $1\leq k\leq n$, on the interval $[p_k,q_k]$, we have two different situations:\\ 
(\romannumeral1) If $\bm{\varphi}_{p_k}=\bm{\varphi}_{q_k}$, then, conditioned on $X_{p_{k}}^\e=x$ satisfying $|H(x)-\varphi_{p_{k}}|<h/2$, for all $\e$ small enough,
\begin{align*}
    &{\bm{\mathrm P}}\left(|H(X_{q_{k}}^\e)-\varphi_{q_{k}}|<h,\ \rho_{p_{k},q_{k}}(Y(X_t^\e),\bm{\varphi}_t)<\delta\right)\\
    \geq\ &{\bm{\mathrm P}}\left(\sup_{p_k\leq t\leq q_k}|H(X_t^\e)-H(X_{p_k}^\e)|<h/2\right)>\frac{1}{2}.
\end{align*}
(\romannumeral2) If $\bm{\varphi}_{p_k}\not=\bm{\varphi}_{q_k}$, then, for $\e$ small enough, by Lemma~\ref{separatrix3}, there exists $\mu<h/2$ such that, conditioned on $X_{p_{k}}^\e=x$ satisfying $|H(x)-\varphi_{p_{k}}|<\mu$,
\begin{align*}
&{\bm{\mathrm P}}\left(|H(X_{q_{k}}^\e)-\varphi_{q_{k}}|<h,\ \rho_{p_{k},q_{k}}(Y(X_t^\e),\bm{\varphi}_t)<\delta\right)\geq\exp(-\e^{-\beta}(S_{p_kq_k}(\bm{\varphi})+\frac{\gamma}{2^{k+2}})).
\end{align*}
With the estimates above combined, we obtain that
$${\bm{\mathrm P}}\left(\rho_{0,T}(Y(X_t^\e),\bm{\varphi})<\delta\right)\geq\exp\left[-\e^{-\beta}(S(\bm{\varphi})+\gamma)\right]$$
for any $\e$ small enough. In the case where $\bm{\varphi}_0$ is at an interior vertex or inside an edge, or $\bm{\varphi}_T$ is at a vertex, the proof is similar.
\end{proof}

\begin{theorem}
\label{lowermaintheorem}
Given $\bm{\varphi}\in\textbf{C}\left([0,T],\Gamma\right)$ with $\bm{\varphi_0}=Y(x_0)$,
$$\varliminf_{\delta\to0}\varliminf_{\e\to 0}\e^\beta\log {\bm{\mathrm P}}\left(\rho_{0,T}(Y(X_t^\e),\bm{\varphi}))<\delta\right)\geq-S(\bm{\varphi}).$$
\end{theorem}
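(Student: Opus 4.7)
The plan is to obtain the theorem as an essentially immediate corollary of \lemref{case1}, which already contains all the quantitative work. That lemma delivers a tube estimate of the form
$$\bm{\mathrm P}\left(\rho_{0,T}(Y(X_t^\e),\bm{\varphi})<\delta\right)\geq\exp\left[-\e^{-\beta}(S(\bm{\varphi})+\gamma)\right]$$
for every choice of $\delta>0$ and $\gamma>0$, valid for all $\e$ below some threshold $\e_0(\delta,\gamma)$. The remaining task is simply to repackage this into a double--$\varliminf$ statement.

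First I fix arbitrary $\gamma>0$ and $\delta>0$, invoke \lemref{case1}, take logarithms, and multiply by the positive quantity $\e^\beta$. This gives, for all $\e<\e_0(\delta,\gamma)$,
$$\e^\beta\log\bm{\mathrm P}\left(\rho_{0,T}(Y(X_t^\e),\bm{\varphi})<\delta\right)\geq -(S(\bm{\varphi})+\gamma).$$
Taking $\varliminf_{\e\to 0}$ of the left-hand side, the bound $-(S(\bm{\varphi})+\gamma)$ persists. Crucially, this lower bound does not involve $\delta$, so it remains valid uniformly in $\delta$.

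Next I let $\gamma\downarrow 0$ (which is permissible because $\gamma>0$ was chosen arbitrarily), yielding
$$\varliminf_{\e\to 0}\e^\beta\log\bm{\mathrm P}\left(\rho_{0,T}(Y(X_t^\e),\bm{\varphi})<\delta\right)\geq -S(\bm{\varphi})$$
for each fixed $\delta>0$. Finally, taking $\varliminf_{\delta\to 0}$ of both sides preserves the inequality, since the right-hand side is independent of $\delta$, and the theorem follows.

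I do not expect any genuine obstacle at this stage: all the substantive analysis — the averaging along single edges (\lemref{loclem}), the escape from exterior vertices (\lemref{extreme}), and the passage through interior vertices via the Morse--Palais chart (\lemref{separatrix3}) — has already been absorbed into \lemref{case1}. The only points requiring minimal care are the order of the two $\varliminf$s and the fact that $\gamma$ must be sent to zero \emph{after} taking $\varliminf_{\e\to 0}$ (not before, since the threshold $\e_0$ depends on $\gamma$). The role of this theorem is to present the lower bound in the exact asymptotic form needed, together with the upper bound and exponential tightness proved in later sections, to invoke Puhalskii's Theorem and conclude the main result.
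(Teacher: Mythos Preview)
Your proposal is correct and matches the paper's own proof exactly: the paper simply states ``This is an immediate consequence of the previous lemma,'' referring to \lemref{case1}. You have just spelled out the routine unpacking (take logarithms, multiply by $\e^\beta$, take $\varliminf_{\e\to0}$, send $\gamma\downarrow0$, then take $\varliminf_{\delta\to0}$) that the paper leaves implicit.
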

\begin{proof}
This is an immediate consequence of the previous lemma.
\end{proof}

\section{Upper bound for the Large Deviation Principle}
In this section, we deal with the upper bound for the local large deviation principle. We start with the discussion about what happens inside a single edge (Lemma~\ref{condition3} and Lemma~\ref{upperrate}). Again, we simplify the notations when there's no ambiguity, as we did in Section 3.1.

\begin{lemma} 
\label{condition3}
Suppose that $C=\overline{D_i(H_1,H_2)}\subset D_i$ and $X_0^\e=x\in C$. Then, for every $\delta>0$, every $\gamma>0$, and every $s>0$, there exists $\e_0>0$ such that
$${\bm{\mathrm P}}(X_t^\e\in C,\rho_{0,T}(Y(X_t^\e),\Phi(s))\geq\delta)\leq\exp(-\e^{-\beta}(s-\gamma))$$for all $\e\leq\e_0$ and every $x$, where $\Phi(s)=\{{\bm{\varphi}}\in \textbf{C}\left([0,T],I_i\right):S({\bm{\varphi}})\leq s,\  \varphi_0\in [H_1,H_2]\}$.
\end{lemma}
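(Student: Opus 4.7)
The plan is to parallel the discretization and averaging of Lemma~\ref{loclem} and couple it with an exponential Chebyshev bound whose tightness comes from conditional sub-Gaussian estimates on the stochastic-integral increments. Throughout, the compactness of $C\subset D_i$ gives uniform positive lower and upper bounds on $T_i$, $B_i^2$, $|\rcoe|^2$ and Lipschitz constants on $C$ and a slight enlargement.

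\textbf{Step 1 (Discretization and averaging event).} Fix $n\in\mathbb N$ large and $\alpha,\mu\in(0,1)$ with $\mu(1+\alpha)<1$, to be tuned at the end. Partition $[0,T]$ at $t_k=kT/n$, set $\tau=T/n$, $\Delta_k=H(X_{t_{k+1}}^\e)-H(X_{t_k}^\e)$, $b_k=B_i^2(H(X_{t_k}^\e))$, and $V_k=\e^\beta\int_{t_k}^{t_{k+1}}|\rcoe|^2\,ds$. Define $A_k$ exactly as in Step~II of the proof of Lemma~\ref{loclem}. The same computation leading to \eqref{eq:4.1} there shows that, on $A=\bigcap_k A_k$,
$$V_k\le (1+\alpha)\,\e^\beta\tau b_k,\qquad \sup_{t\in[t_k,t_{k+1}]}|H(X_t^\e)-H(X_{t_k}^\e)|\le \delta/4,$$
while ${\bm{\mathrm P}}(A^c\cap\{X_t^\e\in C\})\le n\exp(-c_1\e^{-\beta})$ for some $c_1=c_1(n,\alpha,C)>0$.

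\textbf{Step 2 (Reduction to a discrete action bound).} Let $\bm\varphi^{(n)}$ be the piecewise-linear interpolant of $(t_k,H(X_{t_k}^\e))$ inside $I_i$. The second estimate of Step~1 gives $\rho_{0,T}(H(X_t^\e),\bm\varphi^{(n)})\le \delta/2$ on $A$. Since $\varphi^{(n)}_0=H(x)\in[H_1,H_2]$, the event $\{\rho_{0,T}(Y(X_t^\e),\Phi(s))\ge\delta\}\cap A$ forces $\bm\varphi^{(n)}\notin\Phi(s)$, hence $S(\bm\varphi^{(n)})>s$. Computing $S$ of the piecewise-linear path and using Lemma~\ref{lipschitz} together with the bound on $|\Delta_k|$ from $A$ gives
$$S(\bm\varphi^{(n)})\le \frac{1+\alpha}{2\tau}\sum_{k=0}^{n-1}\frac{\Delta_k^2}{b_k}=:(1+\alpha)\hat S_n.$$
Thus the event of interest, intersected with $A$, is contained in $\{\hat S_n\ge s/(1+\alpha)\}\subset\{\hat S_n\ge s-\gamma/4\}$ once $\alpha$ is small.

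\textbf{Step 3 (Conditional sub-Gaussian exponential moments).} Applying Chebyshev with multiplier $\mu\e^{-\beta}$,
$${\bm{\mathrm P}}(\hat S_n\ge s-\gamma/4,\,A)\le e^{-\mu\e^{-\beta}(s-\gamma/4)}\,\mathbb{E}\bigl[e^{\mu\e^{-\beta}\hat S_n}\mathbb{1}_A\bigr].$$
Set $\tilde\Delta_k=\Delta_k-\e^\beta\int_{t_k}^{t_{k+1}}AH(X_s^\e)\,ds=\e^{\beta/2}\int_{t_k}^{t_{k+1}}\rcoe\,dW_s$. The exponential martingale identity
$$\mathbb{E}\Bigl[\exp\bigl(\lambda\tilde\Delta_k-\tfrac{\lambda^2}{2}V_k\bigr)\,\bigm|\,\mathcal F_{t_k}\Bigr]=1,$$
combined with the averaging bound $V_k\le(1+\alpha)\e^\beta\tau b_k$ on $A_k$, yields the conditional sub-Gaussian estimate
$$\mathbb{E}\bigl[e^{\lambda\tilde\Delta_k}\mathbb{1}_{A_k}\bigm|\mathcal F_{t_k}\bigr]\le \exp\bigl(\lambda^2(1+\alpha)\e^\beta\tau b_k/2\bigr).$$
From this, the Gaussian integral identity $e^{y^2/2}=\int e^{y\xi}\,\phi(\xi)\,d\xi$ (with $\phi$ the standard normal density) and an AM-GM bound $\Delta_k^2\le (1+\alpha')\tilde\Delta_k^2+O(\e^{2\beta})$ give
$$\mathbb{E}\bigl[\exp(\mu\e^{-\beta}\Delta_k^2/(2\tau b_k))\,\mathbb{1}_{A_k}\,\bigm|\,\mathcal F_{t_k}\bigr]\le C_{\mu,\alpha},$$
a constant independent of $\e,k,\omega$, provided $\mu(1+\alpha)(1+\alpha')<1$. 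Iterating the tower property over $k=n-1,\dots,0$ produces $\mathbb{E}[e^{\mu\e^{-\beta}\hat S_n}\mathbb{1}_A]\le C_{\mu,\alpha}^{\,n}$.

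\textbf{Step 4 (Closing the estimate).} Combining Steps 1–3,
$${\bm{\mathrm P}}(X_t^\e\in C,\,\rho_{0,T}(Y(X_t^\e),\Phi(s))\ge\delta)\le C_{\mu,\alpha}^{\,n}\,e^{-\mu\e^{-\beta}(s-\gamma/4)}+n\,e^{-c_1\e^{-\beta}}.$$
Choose $\mu=1-\eta$ with $\eta$ small enough that $\mu(s-\gamma/4)\ge s-\gamma/2$, then $\alpha,\alpha'$ small enough to make $\mu(1+\alpha)(1+\alpha')<1$, and finally fix $n$. Since $n,\mu,\alpha$ are all independent of $\e$, the prefactor $C_{\mu,\alpha}^n$ is constant, and the right-hand side is bounded by $\exp(-\e^{-\beta}(s-\gamma))$ for all $\e<\e_0$.

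The main obstacle is the conditional sub-Gaussian estimate in Step~3: the quadratic variation $V_k$ is itself random, and one must carefully replace it by its a.s.\ upper bound on the averaging event $A_k$. This substitution costs a factor $(1+\alpha)$ in the variance proxy, and therefore forces the Chebyshev multiplier $\mu$ to stay strictly below $1/(1+\alpha)$; matching this against the target exponent $s-\gamma$ is exactly what the final tuning of $\mu,\alpha,n$ accomplishes. The rest of the argument reuses the averaging machinery already developed for the lower bound.
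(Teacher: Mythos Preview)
Your proposal is correct and follows essentially the same architecture as the paper's proof: discretize, compare to the piecewise-linear interpolant $l^\e$, apply exponential Chebyshev, and peel off factors via the tower property using a uniform bound on the conditional exponential moment of each increment. The only real difference is how that conditional moment bound is obtained: the paper (Lemma~\ref{finiteexpectation}) passes through a DDS time-change and the reflection principle, whereas you linearize $e^{c\tilde\Delta_k^2/2}$ via the Gaussian-integral identity and feed in the exponential-supermartingale sub-Gaussian bound; both require exactly the same constraint $\mu(1+\alpha)<1$ and produce the same constant. One small point of care: the averaging bound $V_k\le(1+\alpha)\e^\beta\tau b_k$ actually needs $X_{t_k}^\e$ to lie in (a slight enlargement of) $C$, so in the iteration you should carry the event $B_k=\{X_t^\e\in C\}$ alongside $A_k$ as the paper does, and correspondingly choose the internal parameter $\delta'$ in $A_k$ small (depending on $\alpha$) rather than equal to $\delta/2$, then take $n$ large depending on $\delta'$---your Step~4 tuning should reflect this order.
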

\begin{proof}
Without loss of generality, we let $T=1$. Since $H(C)$ is compact, and $C$ is away from the critical points, there exist $m>0$, $M>0$, and $L>0$, such that:
\begin{enumerate}
\item[(1)]$\lvert AH(x)\rvert+|\nabla H(x)^*\sigma(x)|^2<M,\ \forall x\in C$;
\item[(2)]$|\nabla H(x)^*\sigma(x)|^2>m,\ \forall x\in C$; 
\item[(3)]$m<T(H)<M\text{  and  }m<B^2(H)<M,\ \forall H\in H(C)$; 
\item[(4)]$\lvert\nabla H(x)-\nabla H(y)\rvert+\left|\lvert\nabla H(x)^*\sigma(x)\rvert^2-\lvert\nabla H(y)^*\sigma(y)\rvert^2\right|<L\lvert x-y\rvert,\ \forall x,y\in C$; 
\item[(5)]$\lvert T(H_1)-T(H_2)\rvert+\lvert T(H_1)B^2(H_1)-T(H_2)B^2(H_2)\rvert<L\lvert H_1-H_2\rvert,\ \forall H_1,H_2\in H(C)$.
\end{enumerate}
For $s>0$, $\gamma>0$, and $\delta>0$, we choose the following parameters:
\begin{enumerate}
\item[(1)]$0<\mu<1/3$ such that $s(1-2\mu)>s-\gamma$;\par 
\item[(2)]$\delta'>0$ such that $\delta'<\delta\wedge\frac{m^2\mu^2}{4L(2M+1)}$,
which particularly implies that
$$L\delta'(2M+1)<\frac{\mu m^2}{3},$$$$\frac{1-\mu^2}{2}\cdot \frac{2m+L\delta'}{2m-L\delta'}<\frac{1-\mu^2/2}{2}; $$\par 
\item[(3)]$n\in\mathbb N$ such that $n{\delta'}^2>64M(s-\gamma).$
\end{enumerate}

For every $0\leq k<n$, define the random variable $T_k^\e=\e^{1-\beta}T(H(X_\kk^\e))$, the event $A_k$ as in Lemma~\ref{loclem}, and the event $B_k$ where the process stays within the compact set, in which $|\nabla H(X_t^\e)|$ is bounded, so that $A_k$ happens with overwhelming probability:
\begin{align*}
A_k=&\bigcap_{p=0}^{\lfloor\frac{1}{nT^\e_k}\rfloor}\left\{\sup_{pT^\e_k+\kk\leq t\leq (p+1)T^\e_k+\kk}\e^{\beta/2}\left|\int_{k/n+pT^\e_k}^t \sigma(X_s^\e)dW_s\right|\leq M\cdot\e^{\frac{1-\beta}{4}}\right\}\\
&\bigcap\left\{\sup_{0\leq t\leq\frac{1}{n}}\left|H(X_{\kk+t}^\e)-H(X_\kk^\e)\right|\leq\frac{\delta'}{2}\right\};\\
B_k=&\left\{X_t^\e\in C,\forall t\in[\kk,\kl]\right\}.
\end{align*}
If we denote
$$Y_k=\frac{1}{2\e^\beta}\cdot\frac{n}{B^2\left(H(X_\kk^\e)\right)-\frac{L\delta'}{2}}\left|H(X_\kl^\e)-H(X_\kk^\e)\right|^2,$$
then, by Lemma \ref{finiteexpectation}, we have $\bm{\mathrm E}(\chi_{B_k\cap A_k}\cdot\exp((1-\mu)Y_k)\given)<c_\mu$ for some $0<c_\mu<\infty$.

Let $l^\e$ be the random polygon with vertices at $\left(\frac{k}{n},H(X_\kk^\e)\right)$ for $0\leq k\leq n$, and let $B=\bigcap_{k=1}^{n-1} B_k$. Then
\begin{align*}
    &{\bm{\mathrm P}}\left(X_t^\e\in C, \rho_{0,1}(Y(X_t^\e),\Phi(s))\geq\delta\right)\\
    =\ &{\bm{\mathrm P}}\left(X_t^\e\in C, \rho_{0,1}(Y(X_t^\e),\Phi(s))\geq\delta,\rho_{0,1}(Y(X_t^\e),l_t^\e)<\delta)\right)\\
    +&{\bm{\mathrm P}}\left(X_t^\e\in C, \rho_{0,1}(Y(X_t^\e),\Phi(s))\geq\delta,\rho_{0,1}(Y(X_t^\e),l_t^\e)\geq\delta)\right)\\
    \leq\ &{\bm{\mathrm P}}\left(X_t^\e\in C, S(l^\e)>s\right)+{\bm{\mathrm P}}\left(\left\{\sup_{0\leq t\leq 1}|H(X_t^\e)-l_t^\e|\geq\delta\right\}\cap B\right)\\
    \leq\ &{\bm{\mathrm P}}\left(\bigcap_{k=0}^{n-1} A_k\bigcap\{S(l^\e)>s\}\bigcap B\right)+{\bm{\mathrm P}}\left(\bigcup_{k=0}^{n-1}(\Omega\setminus A_k)\bigcap B\right)\\
    +&{\bm{\mathrm P}}\left(\left\{\sup_{0\leq t\leq 1}|H(X_t^\e)-l_t^\e|\geq\delta\right\}\bigcap B\right).
\end{align*}
We estimate the first term: 
\begin{align}
    &{\bm{\mathrm P}}\left(\bigcap_{k=0}^{n-1} A_k\bigcap\{S(l^\e)>s\}\bigcap B\right)\nonumber\\
    =\ &{\bm{\mathrm P}}\left(\bigcap_{k=0}^{n-1} \left(A_k\cap B_k\right)\bigcap\left\{\frac{1}{2}\int_0^1\frac{{\lvert {l^\e_t}^{'}\rvert}^2}{B^2(l_t^\e)}dt>s\right\}\right)\nonumber\\
    =\ &{\bm{\mathrm P}}\left(\bigcap_{k=0}^{n-1} \left(A_k\cap B_k\right)\bigcap\left\{\frac{n^2}{2}\sum_{k=0}^{n-1}\int_\kk^\kl\frac{\left|H(X_\kl^\e)-H(X_\kk^\e)\right|^2}{B^2(l_t^\e)}dt>s\right\}\right)\nonumber\\
    \leq\ &{\bm{\mathrm P}}\left(\bigcap_{k=0}^{n-1} \left(A_k\cap B_k\right)\bigcap\left\{\e^\beta \sum_{k=0}^{n-1}Y_k>s\right\}\right)\nonumber\\
    \leq\ &\frac{{\bm{\mathrm E}}\left[\prod_{k=0}^{n-1}\chi_{A_k\cap B_k}\cdot\exp\left((1-\mu)Y_k\right)\right]}{\exp\left((1-\mu)s\cdot\e^{-\beta}\right)}\label{cheby}\\
    =\ &\frac{{\bm{\mathrm E}}\left[\prod_{k=0}^{n-2}\chi_{A_k\cap B_k}\cdot\exp\left((1-\mu)Y_{k}\right)\cdot {\bm{\mathrm{E}}}\left(\chi_{B_{n-1}\cap A_{n-1}}\cdot\exp((1-\mu)Y_{n-1})\mathrel{\stretchto{\mid}{4ex}}\mathcal F_{\frac{n-1}{n}}\right)\right]}{\exp\left((1-\mu)s\cdot\e^{-\beta}\right)}\nonumber\\
    \leq\ &c_\mu\cdot\frac{{\bm{\mathrm E}}\left[\prod_{k=0}^{n-2}\chi_{A_k\cap B_k}\cdot\exp\left((1-\mu)Y_{k}\right)\right]}{\exp\left((1-\mu)s\cdot\e^{-\beta}\right)}\label{lemmaa9}\\
    \leq\ &\frac{c_\mu^n}{\exp\left((1-\mu)s\cdot\e^{-\beta}\right)}\label{induction}\\
    \leq\ &\frac{1}{3}\exp(-(1-2\mu)s\cdot\e^{-\beta})\nonumber\\
    \leq\ &\frac{1}{3}\exp(-\e^{-\beta}(s-\gamma))\nonumber,
\end{align}
where \eqref{cheby} follows from the exponential Chebyshev's inequality, and \eqref{lemmaa9} follows from Lemma~\ref{finiteexpectation}, and \eqref{induction} can be obtained by applying Lemma~\ref{finiteexpectation} repeatedly.
We estimate the second term using Lemma \ref{ak2}:
$${\bm{\mathrm P}}\left(\bigcup_{k=0}^{n-1}(\Omega\setminus A_k)\bigcap B\right)\leq n\exp\left(-\frac{n{\delta'}^2}{64M\e^{\beta}}\right)\leq\frac{1}{3}\exp(-\e^{-\beta}(s-\gamma)),$$
by recalling that $n{\delta'}^2>64M(s-\gamma)$. 
Similarly, we estimate the third term:
\begin{align*}
    {\bm{\mathrm P}}\left(\left\{\sup_{0\leq t\leq 1}|H(X_t^\e)-l_t^\e|\geq\delta\right\}\bigcap B\right)\leq&\ \sum_{k=0}^{n-1}{\bm{\mathrm P}}\left(\left\{\sup_{\kk\leq t\leq\kl}|H(X_t^\e)-l_t^\e|\geq\delta\right\}\bigcap B\right)\\
    \leq&\ \sum_{k=0}^{n-1}{\bm{\mathrm P}}\left(\left\{\sup_{0\leq t\leq \frac{1}{n}}\left|H(X_{\kk+t}^\e)-H(X_\kk^\e)\right|>\frac{\delta'}{2}\right\}\bigcap B\right)\\
    \leq&\ n\exp\left(-\frac{n{\delta'}^2}{64M\e^{\beta}}\right)\\
    \leq&\ \frac{1}{3}\exp(-\e^{-\beta}(s-\gamma)).
\end{align*}

Thus $${\bm{\mathrm P}}(X_t^\e\in C,\rho_{0,1}(Y(X_t^\e),\Phi(s))\geq\delta)\leq\exp(-\e^{-\beta}(s-\gamma)).$$
\end{proof}
The next result, which relies on Lemma~\ref{condition3} and Theorem~\ref{compactness}, gives an upper bound on probability that the process stays close to a given trajectory on the graph.
\begin{lemma} 
\label{upperrate}
Suppose  that ${\bm{\varphi}}=(i,\varphi)\in\textbf{C}\left([0,T],I_i\right)$ is such that $\inf_{t\in[0,T]}r(\varphi_t,H(O))>0$ for each $O\sim I_i$ and $S({\bm{\varphi}})<\infty$. Then, for every $\gamma>0$,  there exist $\delta>0$ and $\e_0>0$ such that, for each $\e\leq\e_0$ and each initial point $x_0\in D_i(\varphi_0-\delta,\varphi_0+\delta)$, 
$${\bm{\mathrm P}}\left(\rho_{0,T}(Y(X_t^\e),{\bm{\varphi}}\right)\leq\delta)\leq\exp\left(-\e^{-\beta}(S({\bm{\varphi}})-\gamma)\right).$$
\end{lemma}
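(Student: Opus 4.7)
The plan is to deduce this upper bound directly from Lemma~\ref{condition3} by a compactness and separation argument. The intuition is straightforward: the event $\{\rho_{0,T}(Y(X_t^\e),\bm\varphi)\leq\delta\}$ is contained in $\{X_t^\e\in C,\ \rho_{0,T}(Y(X_t^\e),\Phi(s))\geq\delta_c\}$ whenever $\bm\varphi$ is strictly separated from the level set $\Phi(s)$ and $\delta$ is small enough; Lemma~\ref{condition3} then provides the exponential bound we need.

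First I would pick $H_1<\min_t\varphi_t$ and $H_2>\max_t\varphi_t$ with strict gaps, chosen so that $[H_1,H_2]$ is a compact subinterval of $H(I_i)$ bounded away from $H(O)$ for every $O\sim I_i$, and set $C=\overline{D_i(H_1,H_2)}$. Set the action threshold $s:=S(\bm\varphi)-\gamma/2$, which is finite because $S(\bm\varphi)<\infty$. By Theorem~\ref{compactness} (i.e.\ property (0) of the action function, which gives compactness of sublevel sets in the uniform metric), the set
$$\Phi(s)=\{\bm\psi\in\textbf{C}([0,T],I_i):S(\bm\psi)\leq s,\ \psi_0\in[H_1,H_2]\}$$
is compact in $\rho_{0,T}$. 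Since $S(\bm\varphi)>s$, we have $\bm\varphi\notin\Phi(s)$, and I claim $\delta_1:=\rho_{0,T}(\bm\varphi,\Phi(s))>0$. Indeed, otherwise compactness would produce a sequence $\bm\psi_n\in\Phi(s)$ with a subsequential limit in $\Phi(s)$ equal to $\bm\varphi$, contradicting $S(\bm\varphi)>s$.

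Now choose $\delta>0$ smaller than $\delta_1/2$, smaller than $\inf_tr(\varphi_t,H(O))$ for every $O\sim I_i$, and smaller than the gaps between $[\min_t\varphi_t,\max_t\varphi_t]$ and $\{H_1,H_2\}$. On the event $\{\rho_{0,T}(Y(X_t^\e),\bm\varphi)\leq\delta\}$, the first two choices force $Y(X_t^\e)$ to stay inside $I_i$ with $H(X_t^\e)\in[H_1,H_2]$, so $X_t^\e\in C$ throughout; and the hypothesis $x_0\in D_i(\varphi_0-\delta,\varphi_0+\delta)\subset C$ guarantees this holds at $t=0$. The triangle inequality gives $\rho_{0,T}(Y(X_t^\e),\Phi(s))\geq\delta_1-\delta\geq\delta_1/2$. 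Applying Lemma~\ref{condition3} with separation parameter $\delta_1/2$, action level $s$, and slack $\gamma/2$, and possibly shrinking $\e_0$, yields
$${\bm{\mathrm P}}\left(\rho_{0,T}(Y(X_t^\e),\bm\varphi)\leq\delta\right)\leq{\bm{\mathrm P}}\left(X_t^\e\in C,\ \rho_{0,T}(Y(X_t^\e),\Phi(s))\geq\delta_1/2\right)\leq\exp\left(-\e^{-\beta}(s-\gamma/2)\right),$$
and $s-\gamma/2=S(\bm\varphi)-\gamma$ gives exactly the claim.

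The only substantive point is the strict separation $\delta_1>0$, which is essentially a restatement of lower semicontinuity of $S$ and uses Theorem~\ref{compactness} in a crucial way; everything else is bookkeeping with constants. The hypothesis that $\bm\varphi$ is bounded away from the vertices adjacent to $I_i$ is exactly what makes the ``$X_t^\e\in C$'' condition in Lemma~\ref{condition3} automatic on the event we consider, so no extra exit-probability estimate is needed.
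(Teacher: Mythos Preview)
Your proof is correct and is essentially the same as the paper's own argument: both set $s=S(\bm\varphi)-\gamma/2$, invoke Theorem~\ref{compactness} to get a strictly positive distance from $\bm\varphi$ to the sublevel set $\Phi(s)$, choose $\delta$ smaller than half that distance and small enough to keep the trajectory inside a fixed compact $C\subset D_i$, and then apply Lemma~\ref{condition3} with slack $\gamma/2$. The only cosmetic difference is that the paper feeds $\delta$ itself (rather than your $\delta_1/2$) as the separation parameter into Lemma~\ref{condition3}, which works because $\delta<\rho_{0,T}(\bm\varphi,\Phi(s))/2$ forces $\rho_{0,T}(Y(X_t^\e),\Phi(s))>\delta$ on the event in question.
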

\begin{proof}
Without loss of generality, we assume that $O_j\sim I_i$, $O_k\sim I_i$, and $H(\bm{\mathrm x}_k)>H(\bm{\mathrm x}_j)=0$.
Suppose that $0<m'\leq\varphi\leq M'<H(\bm{\mathrm x}_k)$. We define the compact set $C=\overline{D_i(m'/2,\frac{M'+H(\bm{\mathrm x}_k)}{2})}\subset D_i$ and $s=S({\bm{\varphi}})-\gamma/2$. 
Then, by Theorem~\ref{compactness}, $\rho_{0,T}({\bm{\varphi}},\Phi(s))>0$, where $\Phi(s)=\{\Tilde{\bm{\varphi}}:S(\Tilde{\bm{\varphi}})\leq s,\ \Tilde\varphi_0\in [m'/2,\frac{M'+H(\bm{\mathrm  x}_k)}{2}]\}$ (if $\Phi(s)$ is empty, then $\rho_{0,T}({\bm{\varphi}},\Phi(s))=\infty$). 
We choose $\delta>0$ such that $\delta<\rho_{0,T}({\bm{\varphi}},\Phi(s))/2$ and $D_i(m'-\delta,M'+\delta)\subset C$. 
If $\rho_{0,T}(Y(X_t^\e),{\bm{\varphi}})<\delta$, then
$$\rho_{0,T}(Y(X_t^\e),\Phi(s))\geq\rho_{0,T}({\bm{\varphi}},\Phi(s))-\rho_{0,T}(Y(X_t^\e),{\bm{\varphi}})>\delta.$$
So, by Lemma~\ref{condition3}, for $\e$ small enough,
\begin{align*}
    {\bm{\mathrm P}}\left(\rho_{0,T}(Y(X_t^\e),{\bm{\varphi}})\right)<\delta)=&\ {\bm{\mathrm P}}\left(\rho_{0,T}(Y(X_t^\e),{\bm{\varphi}}\right)<\delta, X_t^\e\in C)\\
    \leq&\ {\bm{\mathrm P}}\left(\rho_{0,T}(Y(X_t^\e),\Phi(s))>\delta, X_t^\e\in C\right)\\
    \leq&\ \exp\left(-\e^{-\beta}(s-\gamma/2)\right)\\
    =&\ \exp\left(-\e^{-\beta}(S({\bm{\varphi}})-\gamma)\right).
\end{align*}
\end{proof}

\begin{theorem} 
\label{upperbound}
Given ${\bm{\varphi}}\in\textbf{C}\left([0,T],\Gamma\right)$ with $\bm{\varphi_0}=Y(x_0)$,
$$\varlimsup_{\delta\to0}\varlimsup_{\e\to0}\e^\beta\log {\bm{\mathrm P}}\left(\rho_{0,T}(Y(X_t^\e),{\bm{\varphi}}))<\delta\right)\leq-S({\bm{\varphi}}).$$
\end{theorem}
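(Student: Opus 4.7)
The plan is to reduce the upper bound to Lemma~\ref{upperrate} by splitting the trajectory into finitely many pieces and applying the Markov property. I would first dispose of the case $S(\bm{\varphi})=\infty$ by running the partitioning argument below with an arbitrarily large $N$ in place of $S(\bm{\varphi})$: when $\bm{\varphi}$ is absolutely continuous, $S(\bm\varphi)=\infty$ forces the sum over good subintervals to exceed $N$; the remaining case is handled via an approximation plus the exponential tightness of Section~5. We therefore concentrate on the main case $S(\bm{\varphi})<\infty$, in which $\bm{\varphi}$ is absolutely continuous with $\dot\varphi_t/B_{i_t}(\varphi_t)\in L^2([0,T])$. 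Fix $\gamma>0$.

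Because $B_i^2(H(O_k))=0$ whenever $O_k\sim I_i$, the convention $0/0=0$ forces the integrand $|\dot\varphi_t|^2/B^2_{i_t}(\varphi_t)$ to vanish almost everywhere on the set of times at which $\bm{\varphi}_t$ is a vertex (otherwise $S(\bm{\varphi})$ would be infinite). By absolute continuity of the Lebesgue integral applied to the nested family of open vertex-neighborhoods, I would pick $\delta_0>0$ so small that
$$\int_{\{t\,:\,r(\bm{\varphi}_t,\{O_1,\dots,O_N\})<\delta_0\}}\frac{|\dot\varphi_t|^2}{B^2_{i_t}(\varphi_t)}\,dt<\gamma/2.$$
Using uniform continuity of $\bm{\varphi}$ on $[0,T]$, I then fix a partition $0=t_0<t_1<\cdots<t_p=T$ with mesh so fine that $\bm{\varphi}$ has oscillation less than $\delta_0/4$ on each $J_k=[t_{k-1},t_k]$. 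Each $J_k$ is classified as \emph{good} if $r(\bm{\varphi}_{t_{k-1}},\{O_j\})\geq 3\delta_0/4$ (which then forces $r(\bm{\varphi}_t,\{O_j\})\geq\delta_0/2$ throughout $J_k$ and confines $\bm{\varphi}|_{J_k}$ to a single edge $I_{i_k}$, bounded away from its endpoints) and as \emph{bad} otherwise (so that $r(\bm{\varphi}_t,\{O_j\})<\delta_0$ throughout $J_k$, and $S_{J_k}(\bm\varphi)$ is counted by the integral displayed above).

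On each good interval $J_k$, Lemma~\ref{upperrate} furnishes $\delta_k>0$ and $\e_k>0$ such that, whenever $x\in D_{i_k}$ with $|H(x)-\varphi_{t_{k-1}}|<\delta_k$ and $\e<\e_k$,
$${\bm{\mathrm P}}_x\!\left(\rho_{J_k}(Y(X_t^\e),\bm{\varphi}|_{J_k})<\delta_k\right)\leq\exp\!\left(-\e^{-\beta}\bigl(S_{J_k}(\bm{\varphi})-\gamma/(2p)\bigr)\right).$$
On each bad interval I bound the probability trivially by $1$. Setting $\delta=\min_k\delta_k\wedge\delta_0/4$, the event $\{\rho_{0,T}(Y(X_t^\e),\bm{\varphi})<\delta\}$ forces $|H(X_{t_{k-1}}^\e)-\varphi_{t_{k-1}}|<\delta_k$ at the start of every good interval and places $X_{t_{k-1}}^\e$ in the correct edge $D_{i_k}$ (since $\delta\leq\delta_0/4$ is less than the $\delta_0/2$ separation from neighboring edges). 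Chaining via the Markov property at the deterministic times $t_k$ yields
$${\bm{\mathrm P}}\!\left(\rho_{0,T}(Y(X_t^\e),\bm{\varphi})<\delta\right)\leq\exp\!\left(-\e^{-\beta}\Bigl(\sum_{k\text{ good}}S_{J_k}(\bm{\varphi})-\gamma/2\Bigr)\right)\leq\exp\!\left(-\e^{-\beta}(S(\bm{\varphi})-\gamma)\right),$$
since the bad contribution to $S(\bm\varphi)$ is bounded by the integral above. Taking logarithms, then $\e\to 0$, $\delta\to 0$, and finally $\gamma\to 0$ gives the claim. The main obstacle is organizing the partition so that it simultaneously satisfies the initial-condition hypothesis of Lemma~\ref{upperrate} on every good piece and keeps the total bad contribution to the action arbitrarily small; this rests on the vanishing of $B_i^2$ at vertices, which is precisely what ensures that short visits to a small vertex neighborhood cost negligible action.
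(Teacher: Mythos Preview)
Your argument is correct and follows the same overall strategy as the paper: choose a small neighborhood of the vertices so that the action accumulated there is negligible, apply Lemma~\ref{upperrate} on the remaining pieces, and chain via the Markov property. The difference is only in how the partition is built. The paper uses the successive entrance and exit times of $\bm{\varphi}$ from the $\delta'$- and $2\delta'$-neighborhoods of the vertices (the sequences $p_k,q_k,t_k$), which produces intervals of variable length automatically adapted to the trajectory; you instead take a uniform mesh fine enough that the oscillation of $\bm{\varphi}$ on each piece is small, and then classify each piece as good or bad. Your scheme is slightly more elementary and avoids the bookkeeping of hitting times, at the cost of having to verify that on every good piece the process starts in the correct edge (which you do via the $\delta\le\delta_0/4$ choice). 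Both approaches yield the same estimate.

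One point to tighten: your treatment of the case where $\bm{\varphi}$ is not absolutely continuous is vague. Invoking ``approximation plus exponential tightness'' is not how the paper proceeds, and it is not obvious how to make that route work directly for a local upper bound. The paper instead observes (also somewhat tersely) that when $S(\bm{\varphi})=\infty$ one can, for every $s>0$, find finitely many closed subintervals on which $\bm{\varphi}$ stays away from all vertices and $\sum_k S_{J_k}(\bm{\varphi})>s$; the argument on good intervals then gives the bound $\exp(-\e^{-\beta}(s-\gamma))$ for arbitrary $s$. Your fine-mesh scheme handles this cleanly when $\bm{\varphi}$ is absolutely continuous with $S(\bm{\varphi})=\infty$, but for the non-absolutely-continuous case you should either reduce to Lemma~\ref{condition3} directly (using that $\bm{\varphi}$ restricted to some interval away from the vertices lies at positive distance from every $\Phi(s)$), or spell out the approximation you have in mind.
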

\begin{proof}
There exists a compact set $C$ such that $\{x:\rho_{0,T}(Y(x),{\bm{\varphi}})\leq\delta\}\subset C$. 
Let $\overline{M}$ be such that $$\lvert AH(x)\rvert+|\nabla H(x)^*\sigma(x)|^2<\overline M,\ \forall x\in C.$$
Let us consider the case where neither ${\bm{\varphi}}_0$ nor ${\bm{\varphi}}_T$ is at a vertex. Fix $\gamma>0$. If $S({\bm{\varphi}})=\frac{1}{2}\int_0^T\frac{|\dot\varphi_t|^2}{B_{i_t}^2(\varphi_t)}dt<\infty$, then there exists $0<\delta'<\inf_k|\varphi_0-H(\bm{\mathrm  x}_k)|\wedge\inf_k|\varphi_T-H(\bm{\mathrm  x}_k)|$ so small that
$$\int_0^T\frac{|\dot\varphi_t|^2}{B_{i_t}^2(\varphi_t)}\cdot\chi_{\{{\bm{\varphi}}_t\in D(\pm\delta')\}}dt<\gamma.$$ 
Now let us define three sequences in $[0,T]$ inductively:
\begin{align*}
    q_0&=0;\\
    t_k&=\inf\left\{t>q_{k-1}:r({\bm{\varphi}}_t,O)=\delta'\text{ for some vertex }O\right\},\\
    p_k&=\sup\left\{t<t_k:r({\bm{\varphi}}_t,O)=2\delta'\text{ for some vertex }O\right\},\\
    q_k&=\inf\left\{t>t_k:r({\bm{\varphi}}_t,O)=2\delta'\text{ for some vertex }O\right\},
\end{align*}
for $1\leq k\leq n$, and $n$ is such that the set $\left\{t>q_n:{\bm{\varphi}}_t\in D(\delta')\right\}$ is empty. Define $p_{n+1}=T$. The sequences are all finite due to the absolutely continuity of $\varphi$. Hence, by Lemma~\ref{upperrate}, there exist $\delta>0$ and $\e_0>0$ such that, 
conditioned on $X_{q_k}^\e=x$, %satisfying $x\in D_{i_{q_k}}\left(\varphi_{q_{k}}-\delta,\varphi_{q_{k}}+\delta\right)$, 
for all $\e\leq\e_0$,
$${\bm{\mathrm P}}\left(\rho_{q_k,p_{k+1}}(Y(X_t^\e),{\bm{\varphi}}\right)\leq\delta)\leq\exp\left(-\e^{-\beta}(\frac{1}{2}\int_{q_k}^{p_{k+1}}\frac{|\dot\varphi_t|^2}{B_{i_t}^2(\varphi_t)}dt-2^{-k}\cdot\gamma)\right),$$ 
since $\{t:r({\bm{\varphi}}_t,O)\geq\delta'\text{ for every vertex $O$ on }\Gamma\}\supset\bigcup_{k=0}^n(q_k,p_{k+1})$. Thus we obtain
\begin{align*}
    {\bm{\mathrm P}}\left(\rho_{0,T}(Y(X_t^\e),{\bm{\varphi}}))<\delta\right)\leq&\ {\bm{\mathrm P}}\left(\rho_{q_k,p_{k+1}}(H(X_t^\e),\varphi)<\delta,\ 0\leq k\leq n\right)\\
    \leq&\ \exp\left(-\e^{-\beta}(\frac{1}{2}\int_{\bigcup_{k=0}^n(q_k,p_{k+1})}\frac{|\dot\varphi_t|^2}{B_{i_t}^2(\varphi_t)}dt-2\gamma)\right)\\
    \leq&\ \exp\left(-\e^{-\beta}(S({\bm{\varphi}})-\gamma-2\gamma)\right)\\
    =&\ \exp\left(-\e^{-\beta}(S({\bm{\varphi}})-3\gamma)\right)
\end{align*}
by the strong Markov property. Since we chose $\gamma>0$ arbitrarily,$$
\varlimsup_{\delta\to0}\varlimsup_{\e\to0}\e^\beta\log {\bm{\mathrm P}}\left(\rho_{0,T}(Y(X_t^\e),{\bm{\varphi}}))<\delta\right)\leq-S({\bm{\varphi}}).
$$
In the case where $S({\bm{\varphi}})=\infty$, it is easy to show that, for every $s>0$, there exist finitely many disjoint closed intervals $\{J_k\}$ such that $\bm{\varphi}$ stays away from all vertices on all $J_k$'s and $\sum_{k} S_{J_k}(\bm{\varphi})>s$. The result then follows in a similar way as in the finite action functional case.
\end{proof}

\section{Exponential Tightness}
In this section, we establish the exponential tightness of the process $Y(X_t^\e)$ in $\textbf{C}\left([0,T],\Gamma\right)$. We start with the definition before heading to the proof (cf. \cite{Liptser1996LargeDF}).
\begin{definition}
    The family $Y(X_t^\e)$ is exponentially tight (with rate $\e^\beta$) in $\textbf{C}\left([0,T],\Gamma\right)$ if there exists an increasing sequence of compact sets $(K_j)_{\{j\geq1\}}$ in $\textbf{C}\left([0,T],\Gamma\right)$ such that 
    $$\lim_{j\to\infty}\varlimsup_{\e\to 0}\e^\beta\log\bm{{\bm{\mathrm P}}}(Y(X_t^\e)\in \textbf{C}\left([0,T],\Gamma\right)\setminus K_j)=-\infty.$$
\end{definition}Our approach is based on Theorem 3.1 in \cite{semi}. It suffices to check the following conditions:
\begin{align*}
    &(\romannumeral1)\lim_{C\to\infty}\varlimsup_{\e\to0}\e^\beta\log {\bm{\mathrm P}}\left(\sup_{[0,T]}|H(X_t^\e)|>C\right)=-\infty,\\
    &(\romannumeral2)\lim_{\delta\to0}\varlimsup_{\e\to0}\e^\beta\log\sup_{\tau\leq T-\delta}{\bm{\mathrm P}}\left(\sup_{t\in [0,\delta]}r(Y(X_{\tau+t}^ \e),Y(X_\tau^\e))>\eta\right)=-\infty,\ \ \ \forall\eta>0,
\end{align*}
where $\tau$ is a stopping time w.r.t. the filtration $\mathcal F$.
\begin{theorem}
\label{exponentialtightness}
Under our assumptions, conditions $(\romannumeral1)$ and $(\romannumeral2)$ are both valid, hence the family $Y(X^\e_t)$ is exponentially tight (with rate $\e^\beta)$ in $\textbf{C}\left([0,T],\Gamma\right)$.
\end{theorem}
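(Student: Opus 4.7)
The plan is to verify conditions (\romannumeral1) and (\romannumeral2) independently via the It\^{o} decomposition \eqref{eq:4} and standard exponential martingale inequalities, with a small combinatorial observation about the graph metric needed to pass from $r(Y(\cdot), Y(\cdot))$ back to oscillations of $H$.

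For (\romannumeral1), the first thing I would observe is that although $|\nabla H(x)|$ can grow linearly in $|x|$, the hypotheses imply the global bound $|\nabla H(x)^*\sigma(x)|^2 \leq K(1+H(x))$: bounded second derivatives give $|\nabla H(x)| \leq C(1+|x|)$, and the growth condition $H(x) \geq A_1|x|^2$ for large $|x|$ then yields $|\nabla H(x)|^2 \leq K'(1+H(x))$. I would then apply It\^{o}'s formula to $\phi(H(X_t^\e))$ with $\phi(h) = \log(1+h)$. The drift of $\phi(H(X_t^\e))$ works out to $\e^\beta[AH/(1+H) - |\nabla H^*\sigma|^2/(2(1+H)^2)]$, which is uniformly bounded since $AH$ is bounded and $|\nabla H^*\sigma|^2/(1+H)$ is bounded; the squared diffusion coefficient is $\e^\beta |\nabla H^*\sigma|^2/(1+H)^2 \leq K_2 \e^\beta$. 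A reflection-principle bound for the martingale part then gives
\[
{\bm{\mathrm P}}\bigl(\sup_{t \in [0,T]} \log(1+H(X_t^\e)) > \log(1+H(x_0)) + K_1 T + A\bigr) \leq 2\exp\bigl(-A^2/(2 K_2 T \e^\beta)\bigr),
\]
and setting $A = \log(1+C) - \log(1+H(x_0)) - K_1 T$ yields $\e^\beta \log {\bm{\mathrm P}}(\sup_{[0,T]} H(X_t^\e) > C) \leq -(\log C)^2/(4K_2 T) \to -\infty$ as $C \to \infty$.

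For (\romannumeral2), the first ingredient is a purely geometric observation: if $\Delta := \max_{s \in [\tau, \tau+\delta]} |H(X_s^\e) - H(X_\tau^\e)|$, then by continuity of $Y(X_s^\e)$, every vertex on the shortest graph path from $Y(X_\tau^\e)$ to $Y(X_{\tau+\delta}^\e)$ has height within $\Delta$ of $H(X_\tau^\e)$. Since only finitely many vertices lie in any bounded height range, the shortest path consists of at most $N$ segments each of length at most $2\Delta$, so
\[
r(Y(X_{\tau+\delta}^\e), Y(X_\tau^\e)) \leq 2N\,\max_{s \in [\tau, \tau+\delta]} |H(X_s^\e) - H(X_\tau^\e)|.
\]
Condition (\romannumeral2) then reduces to super-exponential decay (in $\e^\beta$) of $\sup_\tau {\bm{\mathrm P}}(\sup_{t \in [0,\delta]}|H(X_{\tau+t}^\e) - H(X_\tau^\e)| > \eta/(2N))$ as $\delta \to 0$. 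I would handle this by conditioning on $E_C := \{\sup_{[0,T]}|H(X_t^\e)| \leq C\}$ (on which $|\nabla H^*\sigma|^2 \leq M(C)$), applying the strong Markov property at $\tau$, and using the exponential martingale inequality for the martingale part of $H(X_{\tau+t}^\e) - H(X_\tau^\e)$, whose quadratic variation on $[0,\delta]$ is at most $M(C)\,\e^\beta \delta$; the drift $\e^\beta\|AH\|_\infty \delta$ is negligible. This yields a bound of the form $2\exp(-c\eta^2/(N^2 M(C)\delta \e^\beta))$, and combining with (\romannumeral1) to absorb ${\bm{\mathrm P}}(\Omega \setminus E_C)$, then taking $\e^\beta \log$, $\e \to 0$, $\delta \to 0$, and finally $C \to \infty$ completes the verification.

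The main obstacle is the combinatorial-geometric bound tying $r$ to the oscillation of $H$; once that is in hand the stochastic estimates reduce to standard exponential martingale arguments applied to \eqref{eq:4}. A secondary technical point is the choice of the logarithm transform in (\romannumeral1), which exactly balances the linear growth of $|\nabla H|^2$ against the Lyapunov function, making $\phi(H(X_t^\e))$ behave like a Brownian motion with bounded drift and bounded diffusion coefficient.
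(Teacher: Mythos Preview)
Your proposal is correct, and for condition (\romannumeral2) it follows essentially the paper's own route: reduce the graph-metric oscillation to the oscillation of $H$, localize on the event $\{\sup_{[0,T]}|H(X_t^\e)|\leq C\}$ where $|\nabla H^*\sigma|^2$ is bounded, and control the martingale part of \eqref{eq:4} via the exponential supermartingale $\exp(\lambda Z_t-\tfrac{1}{2}\lambda^2\langle Z\rangle_t)$ (what you call the ``exponential martingale inequality''). The only cosmetic difference is the graph-geometric step: the paper simply assumes without loss of generality that $\eta$ is smaller than the minimum gap between critical values, which forces the factor relating $r$ to the oscillation of $H$ to be $3$ rather than $2N$. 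Your argument that the shortest path is contained in the range of the continuous trajectory $s\mapsto Y(X_s^\e)$ (hence passes only through vertices with $H$-value in $[H(X_\tau^\e)-\Delta,H(X_\tau^\e)+\Delta]$) implicitly uses that the Reeb graph is a tree, which it is under the paper's hypotheses, so the argument is sound.

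For condition (\romannumeral1) you take a genuinely different route. The paper proceeds directly: it stops the process at the first hitting time $\tau$ of $\{H=C\}$, observes that on $\{\tau<T\}$ the martingale term in \eqref{eq:4} must exceed roughly $C/2$, and uses the crude bound $|\nabla H^*\sigma|^2\leq 4M^4C/A_1$ on $\{H\leq C\}$ to control the quadratic variation, obtaining decay $\exp(-cC\e^{-\beta})$. Your logarithm transform $\phi(H)=\log(1+H)$ is more structural: it exploits the same growth estimate $|\nabla H^*\sigma|^2\leq K(1+H)$ to make \emph{both} the drift and the squared diffusion coefficient of $\phi(H(X_t^\e))$ globally bounded, so no stopping is needed and one gets decay $\exp(-c(\log C)^2\e^{-\beta})$. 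Your rate in $C$ is weaker, but for exponential tightness only the divergence as $C\to\infty$ matters, and your argument is arguably cleaner because the Lyapunov structure does all the work. One small point worth making explicit: $H$ is bounded below (by continuity and the coercivity assumption), so $\log(1+H-\min H)$ is globally defined; you may want to state this shift explicitly.
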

\begin{proof}
We present the proof in two steps. \textbf{\romannumeral1.} Let us check condition $(\romannumeral1)$ first. 
By assumptions, there exists $M$ large such that $H(x)\geq A_1|x|^2$ for $|x|\geq M$, $|AH(x)|+\lVert\sigma(x)\rVert\leq M$ for $x\in\mathbb R^2$, and $|\nabla H|$ is Lipschitz with constant $M$.
For any large $C>A_1M^2+2|H(x_0)|+A_1|x_0|^2+1$, define the stopping time $\tau=\inf\{t:H(X_t^\e)=C\}\wedge T$. Then we have
$$H(X_\tau^\e)=H(x_0)+\e^\beta\int_0^\tau AH(X_s^\e)ds+\e^{\beta/2}\int_0^\tau\rcoe dW_s.$$
Let $F=\{x:H(x)\leq C\}$. Then, for $x\in F$, $|x|\leq\sqrt{\frac{C}{A_1}}$.
In light of the boundedness of $\sigma$ and of the second derivatives of $H$,
\begin{equation}
    \label{eq:6.1}
    |\nabla H^*(x)\sigma(x)|\leq M^2|x-x_0|<M^2(\sqrt{\frac{C}{A_1}}+|x_0|)<2M^2\sqrt{\frac{C}{A_1}},
\end{equation}
for each $x\in F$.
From the fact $H(X_\tau^\e)=C$ we deduce that, on the event $\{\tau< T\}$, 
\begin{align*}
    \e^{\beta/2}\int_0^\tau\rcoe dW_s=&\ C-H(x_0)-\e^\beta\int_0^\tau AH(X_s^\e)ds\\
    \geq&\ C-H(x_0)-MT\e^\beta\\
    \geq&\ \frac{1}{2}C,
\end{align*}
for $\e$ sufficiently small. Thus, on the event $\{\tau<T\}$,
\begin{align*}
    \int_0^T\chi_{\{\tau\geq s\}}\rcoe dW_s\geq&\ \frac{1}{2}C\e^{-\beta/2}\\
    \Tilde W(\int_0^T|\chi_{\{\tau\geq s\}}\rcoe|^2ds)\geq&\ \frac{1}{2}C\e^{-\beta/2}\\
    \sup_{[0,\frac{4M^4TC}{A_1}]}\Tilde W_t\geq&\ \frac{1}{2}C\e^{-\beta/2},
\end{align*}
where $\Tilde W$ is another Brownian motion. Hence $\{\tau< T\}$ implies that $\{\sup_{t\in[0,\frac{4M^4TC}{A_1}]}\Tilde W_t\geq\ \frac{1}{2}C\e^{-\beta/2}\}$.
As for the probability ${\bm{\mathrm P}}\left(\sup_{[0,T]}|H(X_t^\e)|>C\right)$ for fixed $C$ large enough,
\begin{align*}
    {\bm{\mathrm P}}\left(\sup_{[0,T]}|H(X_t^\e)|>C\right)=&\ {\bm{\mathrm P}}\left(\tau<T\right)\\
    \leq&\ {\bm{\mathrm P}}\left(\sup_{[0,\frac{4M^4TC}{A}]}\Tilde W_t\geq\frac{1}{2}C\e^{-\beta/2}\right)\\
    =&\ 2{\bm{\mathrm P}}\left(\Tilde W_{\frac{4M^4TC}{A}}\geq\frac{1}{2}C\e^{-\beta/2}\right)\\
    \leq&\ \sqrt{\frac{32M^4T\e^\beta}{\pi AC}}\exp(-\frac{AC}{32M^4T\e^\beta}).
\end{align*}
Therefore, $$\lim_{C\to\infty}\varlimsup_{\e\to0}\e^\beta\log {\bm{\mathrm P}}\left(\sup_{[0,T]}|H(X_t^\e)|>C\right)=-\infty$$

\textbf{\romannumeral2 .} Since we've already checked the validity of the first condition, it suffices to instead prove that for every $\eta>0$, $C>0$,
$$\lim_{\delta\to0}\varlimsup_{\e\to0}\e^\beta\log\sup_{\tau\leq T-\delta}{\bm{\mathrm P}}\left(\sup_{t\in [0,\delta]}r(Y(X_{\tau+t}^ \e),Y(X_\tau^\e))>\eta,\sup_{[0,T]}|H(X_t^\e)|\leq C\right)=-\infty.$$
Without loss of generality, we assume that $\eta$ is so small that $\eta<|H(\bm{\mathrm x}_k)-H(\bm{\mathrm x}_j)|$ for any critical points $\bm{\mathrm x}_j\not=\bm{\mathrm x}_k$. Note that  
$$    \left\{\sup_{t\in [0,\delta]}r(Y(X_{\tau+t}^ \e),Y(X_\tau^\e))>\eta\right\}\subset\left\{\sup_{t\in [0,\delta]}|H(X_{\tau+t}^ \e)-H(X_\tau^\e)|>\eta/3\right\}. $$
Thus it suffices that show that, for any positive $\eta$ and $C$,
$$\lim_{\delta\to0}\varlimsup_{\e\to0}\e^\beta\log\sup_{\tau\leq T-\delta}{\bm{\mathrm P}}\left(\sup_{t\in [0,\delta]}|H(X_{\tau+t}^ \e)-H(X_\tau^\e)|>\frac{\eta}{3},\sup_{[0,T]}|H(X_t^\e)|\leq C\right)=-\infty.$$
Since $AH(x)$ is bounded, this is implied by
$$\lim_{\delta\to0}\varlimsup_{\e\to0}\e^\beta\log\sup_{\tau\leq T-\delta}{\bm{\mathrm P}}\left(\sup_{t\in [0,\delta]}|\int_{\tau}^{\tau+t}\rcoe dW_s|>\frac{\eta}{4},\sup_{[0,T]}|H(X_t^\e)|\leq C\right)=-\infty.$$
Define the process $Y_t^\e=\e^{\beta/2}\int_0^{t}\rcoe dW_s$, which is a martingale.
Fix $\delta>0$. Then, for any stopping time $\tau\leq T-\delta$, define a random change of time $\hat\tau_t:=\tau+t$ and a new process $\hat Y_t^\e:=Y_{\hat\tau_t}$ with a new filtration $\hat{\mathcal {F}_t}:=\mathcal F_{\hat\tau_t}$, $\forall t\geq0$. 
By Theorem 1 from Chap.4, Sect.7 in \cite{martingale}, $\hat Y_t^\e$ is also a martingale, hence $Z_t^\e=\hat Y_t^\e-\hat Y_0^\e$ is martingale with filtration $\hat{\mathcal {F}_t}$. 
Furthermore, by Problem 2.28 from Chap.3 in \cite{shreve}, $\zeta_t^\e:=\exp(\lambda Z_t^\e-\frac{1}{2}\lambda^2\langle Z^\e\rangle_t)$ is supermartingale for each $\lambda\in\mathbb R$. 
Take a stopping time $\sigma:=\inf\{t\leq\delta:|Z_t^\e|\geq\eta/4\}\wedge \delta$ w.r.t. $\hat{\mathcal {F}_t}$. 
By the Optional Sampling Theorem, $E\zeta_\sigma^\e\chi_{\{Z_\sigma^\e\geq\eta/4,\ \sup_{[0,T]}|H(X_t^\e)|\leq C\}}\leq E\zeta_0^\e=1$. 
Keeping \eqref{eq:6.1} in mind, we conclude that 
$${\bm{\mathrm P}}\left(Z_\sigma^\e\geq\eta/4,\ \sup_{[0,T]}|H(X_t^\e)|\leq C\right)\leq\exp\left(-\frac{\lambda\eta}{4}+\frac{\lambda^2}{2}\cdot\frac{4M^4C}{A_1}\cdot\delta\e^\beta\right)$$holds for each $\lambda\in\mathbb R$. Thus, ${\bm{\mathrm P}}\left(Z_\sigma^\e\geq\eta/4,\ \sup_{[0,T]}|H(X_t^\e)|\leq C\right)\leq\exp\left(-\frac{A_1\eta^2}{128M^4C\delta\e^\beta}\right)$. Similarly, one can prove ${\bm{\mathrm P}}\left(Z_\sigma^\e\leq-\eta/4,\ \sup_{[0,T]}|H(X_t^\e)|\leq C\right)\leq\exp\left(-\frac{A_1\eta^2}{128M^4C\delta\e^\beta}\right)$. So the second condition is also valid.
\end{proof}

\section{Proof of the Main Result}
The following theorem is a result by Puhalskii adapted to our particular case (cf. Theorem 1.2 in \cite{semi}, and also Corollary 3.4 in \cite{Puhalskii}).
\begin{theorem}
\label{puhalskii}
    If the family of process $Y(X_t^\e)$ is exponentially tight (with rate $\e^\beta$) and functional $S$ satisfies the following condition for every $\bm{\varphi}$ with $\bm{\varphi}_0=Y(x_0)$:
\begin{align}
    S(\bm{\varphi})=&-\lim_{\delta\to0}\varlimsup_{\e\to0}\e^\beta\log {\bm{\mathrm P}}\left(\rho_{0,T}(Y(X_t^\e),\bm{\varphi}))<\delta\right)\nonumber\\
    =&-\lim_{\delta\to0}\varliminf_{\e\to0}\e^\beta\log {\bm{\mathrm P}}\left(\rho_{0,T}(Y(X_t^\e),\bm{\varphi}))<\delta\right),\label{locallargedev}
\end{align}
then $\e^{-\beta}S(\bm\varphi)$ is the action functional of the family of process $Y(X_t^\e)$ in $\textbf{C}\left([0,T],\Gamma\right)$ restricted to the set of functions that start at $Y(x_0)$.
\end{theorem}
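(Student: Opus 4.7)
My plan is to verify the three clauses in the definition of an action functional, using exponential tightness and the two-sided local bound \eqref{locallargedev} as the only inputs. The argument is entirely abstract and mirrors the standard proof of Puhalskii's theorem; no further properties of $Y(X_t^\e)$ are needed beyond the two hypotheses.

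First, I would dispatch the lower bound, clause (1) in the definition. Given $\bm\varphi$, $\delta>0$, and $\gamma>0$, the second equality in \eqref{locallargedev} supplies some $\delta_0\in(0,\delta)$ with $\liminf_{\e\to 0}\e^\beta\log{\bm{\mathrm P}}(\rho_{0,T}(Y(X_t^\e),\bm\varphi)<\delta_0)\geq -S(\bm\varphi)-\gamma$. Monotonicity in the radius then gives ${\bm{\mathrm P}}(\rho_{0,T}(Y(X_t^\e),\bm\varphi)<\delta)\ge\exp(-\e^{-\beta}(S(\bm\varphi)+\gamma))$ for $\e$ small.

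Next I would establish compactness of $\Phi(s)$. Lower semicontinuity of $S$ follows from the second equality in \eqref{locallargedev}: if $\bm\varphi^{(n)}\to\bm\varphi$, then any $\delta$-ball about $\bm\varphi$ eventually contains the $(\delta/2)$-ball about $\bm\varphi^{(n)}$, so passing to the limit in $\e$ and then in $n$ yields $S(\bm\varphi)\le\liminf_n S(\bm\varphi^{(n)})$; hence $\Phi(s)$ is closed. For boundedness, exponential tightness provides compact $K_j\subset\textbf{C}([0,T],\Gamma)$ with $\limsup_{\e\to 0}\e^\beta\log{\bm{\mathrm P}}(Y(X^\e)\notin K_j)\le -\alpha_j$ and $\alpha_j\to\infty$. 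If $\bm\varphi\notin K_j$, then any ball about $\bm\varphi$ disjoint from $K_j$ has probability at most ${\bm{\mathrm P}}(Y(X^\e)\notin K_j)$, and \eqref{locallargedev} then forces $S(\bm\varphi)\ge\alpha_j$. Taking $j$ with $\alpha_j>s$ shows $\Phi(s)\subset K_j$, so $\Phi(s)$ is a closed subset of a compact set, hence compact.

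Finally, the upper bound (clause (2)) would come from a covering argument. Fix $s,\delta,\gamma>0$ and choose a compact $K$ with ${\bm{\mathrm P}}(Y(X^\e)\notin K)\le\exp(-\e^{-\beta}(s-\gamma))$ for $\e$ small. The set $U:=K\setminus N_\delta(\Phi(s))$ is compact, and every $\bm\psi\in U$ satisfies $S(\bm\psi)>s$. The upper half of \eqref{locallargedev} furnishes for each such $\bm\psi$ a radius $r_\psi>0$ and threshold $\e_\psi$ with ${\bm{\mathrm P}}(\rho_{0,T}(Y(X_t^\e),\bm\psi)<r_\psi)\le\exp(-\e^{-\beta}(s-\gamma/2))$ for $\e<\e_\psi$. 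Extracting a finite subcover $\{B(\bm\psi_k,r_{\psi_k})\}_{k=1}^N$ of $U$ and applying a union bound,
$${\bm{\mathrm P}}(\rho_{0,T}(Y(X^\e),\Phi(s))\ge\delta)\le{\bm{\mathrm P}}(Y(X^\e)\notin K)+\sum_{k=1}^N{\bm{\mathrm P}}(\rho_{0,T}(Y(X^\e),\bm\psi_k)<r_{\psi_k}),$$
and the finite factor $N$ gets absorbed into the exponential for $\e$ small enough, producing the desired $\exp(-\e^{-\beta}(s-\gamma))$.

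The main obstacle I foresee is the compactness of $\Phi(s)$: it is the only step where exponential tightness and the local LDP interact nontrivially, and it simultaneously requires the lower semicontinuity argument and the exponential-tightness-to-confinement argument. Once these are in hand, both the lower bound and the upper-bound covering argument are essentially routine.
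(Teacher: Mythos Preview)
Your argument is correct and is essentially the standard proof of Puhalskii's theorem. The paper, however, does not actually prove this statement: it simply records it as an adaptation of a known abstract result, citing Theorem~1.2 in \cite{semi} and Corollary~3.4 in \cite{Puhalskii}, and then invokes it to deduce Theorem~\ref{main}. So your proposal supplies strictly more detail than the paper does.

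One small technical point worth tightening in your covering step: the hypothesis \eqref{locallargedev} is only assumed for $\bm\psi$ with $\bm\psi_0=Y(x_0)$, while elements of $U=K\setminus N_\delta(\Phi(s))$ need not start there. This is harmless, since for any $\bm\psi$ with $\bm\psi_0\neq Y(x_0)$ one can take $r_{\bm\psi}<r(\bm\psi_0,Y(x_0))$ and then ${\bm{\mathrm P}}(\rho_{0,T}(Y(X^\e),\bm\psi)<r_{\bm\psi})=0$ because the process always starts at $Y(x_0)$; alternatively, replace $K$ by its intersection with the closed set $\{\bm\psi:\bm\psi_0=Y(x_0)\}$ at the outset. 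With that adjustment your three-step verification of clauses (0), (1), (2) goes through.
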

\begin{proof}[Proof of Theorem~\ref{main}]
    The validity of the conclusion is due to Theorem~\ref{lowermaintheorem}, Theorem~\ref{upperbound}, Theorem~\ref{exponentialtightness}, and Theorem~\ref{puhalskii}.
\end{proof}

\appendix
\section{Appendix}

\textbf{Part 1.} We prove the level set $\Phi_\delta(s)=\left\{r(\bm{\varphi_0},Y(x_0))\leq\delta:S(\varphi)\leq s\right\}$ is compact in $\textbf{C}\left([0,T],\mathbb R\right)$ for any $\delta\geq0$, $s\geq0$.

\begin{lemma}
\label{zeroder}
Suppose that $\varphi:[0,T]\to\mathbb R$ is absolutely continuous, and let $E=\{t:\varphi_t=c\}$, where $c$ is a fixed constant. Then, for almost every point $t$ in $E$ (with respect to the Lebesgue measure), we have $\dot\varphi_t=0$.
\end{lemma}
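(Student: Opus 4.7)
The plan is to invoke two standard real-analysis facts and combine them by looking at difference quotients along points of $E$. First, since $\varphi$ is absolutely continuous, the derivative $\dot\varphi_t$ exists at almost every $t\in[0,T]$; call this full-measure set $D$. Second, by the Lebesgue density theorem, almost every point of the measurable set $E$ is a density point of $E$; call the set of density points $E^*$, so that $|E\setminus E^*|=0$. The set $E^*\cap D$ then has full measure in $E$, and it suffices to show $\dot\varphi_{t_0}=0$ for each $t_0\in E^*\cap D$.

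Fix such a $t_0$. Because $t_0$ is a density point of $E$, for every $r>0$ small enough the set $E\cap(t_0-r,t_0+r)$ has positive Lebesgue measure; in particular it contains points different from $t_0$. Hence we may choose a sequence $t_n\in E$ with $t_n\neq t_0$ and $t_n\to t_0$. Since $\varphi$ takes the value $c$ on $E$, every difference quotient satisfies
\[
\frac{\varphi_{t_n}-\varphi_{t_0}}{t_n-t_0}=\frac{c-c}{t_n-t_0}=0.
\]
Because $\dot\varphi_{t_0}$ exists by the choice $t_0\in D$, the full two-sided limit of the difference quotient as $t\to t_0$ equals $\dot\varphi_{t_0}$, and in particular any subsequential limit along $t_n\to t_0$ agrees with it. Therefore $\dot\varphi_{t_0}=0$, which is what we wanted.

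There is essentially no obstacle here beyond quoting the Lebesgue density theorem and the almost-everywhere differentiability of absolutely continuous functions; the only mild point to keep in mind is that the Lebesgue density theorem is needed to guarantee the existence of a sequence $t_n\in E\setminus\{t_0\}$ converging to $t_0$, since otherwise $t_0$ could be an isolated point of $E$ at which the conclusion would be vacuous but the argument above would not apply. Note also that absolute continuity is stronger than needed: only a.e.\ differentiability of $\varphi$ is used, which is why the statement is stated as a purely real-variable lemma and can be applied wherever the trajectory sits at a vertex of the Reeb graph.
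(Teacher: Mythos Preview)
Your proof is correct and rests on the same core observation as the paper's: at any point of $E$ that is not isolated in $E$ and where $\varphi$ is differentiable, the difference quotient can be evaluated along a sequence in $E\setminus\{t_0\}$, forcing the derivative to vanish. The only difference lies in how the exceptional set is handled. You invoke the Lebesgue density theorem to discard the non-density points of $E$, whereas the paper argues the contrapositive directly: if $\dot\varphi_{t_0}$ exists and is nonzero, then $\varphi_t\neq c$ on a punctured neighborhood of $t_0$, so $t_0$ is isolated in $E$; since isolated points of any subset of $\mathbb R$ are at most countable, they form a null set. The paper's route is marginally more elementary (no density theorem required), but the two arguments are otherwise the same difference-quotient computation.
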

\begin{proof}
Since $\varphi$ is absolutely continuous on $[0,T]$, it's differentiable almost everywhere. 
Let $F$ be the subset of $E$ where $\varphi$ is not differentiable, so $\lambda(F)=0$. 
For each $t_0\in E\setminus F$, the limit $$\lim_{h\to0}\frac{\varphi_{t_0+h}-\varphi_{t_0}}{h}$$ exists. If $\dot\varphi_{t_0}\not=0$, then $\varphi_t\not=\varphi_{t_0}=0$ on $[t_0-h,t_0+h]$ for some positive $h$, which means that $t_0$ is an isolated point in $E$. There can be at most countably many isolated points inside a set of finite measure, so $G:=\{t:\dot\varphi_t\text{ exists and is nonzero}\}$ has measure $0$. Thus $E\setminus(F\cup G)$ is the set where $\dot\varphi_t=0$, and $\lambda(F\cup G)=0$.
\end{proof}
\begin{lemma}
\label{semic}
The mapping $S:\textbf{C}\left([0,T],\Gamma\right)\to\mathbb R$ is lower semicontinuous.
\end{lemma}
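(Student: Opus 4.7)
The plan is to use a Fenchel-type variational representation that exhibits $S$ as a pointwise supremum of $\rho_{0,T}$-continuous functionals on $\textbf{C}\left([0,T],\Gamma\right)$, which makes lower semicontinuity automatic. First I would take any sequence $\bm\varphi^{(n)}=(i^{(n)},\varphi^{(n)})\to\bm\varphi=(i,\varphi)$ in the uniform metric and, after passing to a subsequence, suppose $S(\bm\varphi^{(n)})\to L$. The case $L=\infty$ is vacuous, so assume $L<\infty$. Because the $\bm\varphi^{(n)}$ all lie in a bounded part of $\Gamma$, there is an $M$ with $B_{i_t^{(n)}}^{2}(\varphi_t^{(n)})\le M$; Cauchy--Schwarz then yields, for any disjoint family $(a_j,b_j)\subset[0,T]$,
\[
\sum_j\lvert\varphi^{(n)}_{b_j}-\varphi^{(n)}_{a_j}\rvert\le\bigl(2S(\bm\varphi^{(n)})\bigr)^{1/2}\Bigl(\sum_j\int_{a_j}^{b_j}B_{i^{(n)}_t}^{2}(\varphi^{(n)}_t)\,dt\Bigr)^{1/2}\le\sqrt{2LM{\textstyle\sum_j}(b_j-a_j)},
\]
so the limit $\varphi$ inherits (uniform) absolute continuity.

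Next, for each $f\in C^1([0,T],\mathbb{R})$ I would introduce the functional
\[
\Lambda_f(\bm\psi)=f(T)\psi(T)-f(0)\psi(0)-\int_0^T f'(t)\psi(t)\,dt-\tfrac12\int_0^T B_{j_t}^{2}(\psi_t)f(t)^2\,dt.
\]
The decisive point is that $\bm\psi\mapsto B_{j_t}^{2}(\psi_t)$ is a continuous function of $\bm\psi\in\Gamma$ (continuous on each closed edge with value $0$ at the vertex endpoints, by convention and the regularity provided by \lemref{lipschitz}), so every $\Lambda_f$ is continuous in the uniform metric. For absolutely continuous $\bm\psi$, integration by parts turns the first three terms into $\int_0^T f\dot\psi\,dt$, and the pointwise inequality $fz-\tfrac12 b^2 f^2\le z^2/(2b^2)$ (with $0/0=0$, whose legitimacy here is exactly \lemref{zeroder}) gives $\Lambda_f(\bm\psi)\le S(\bm\psi)$. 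Applying this along the sequence and using continuity of $\Lambda_f$, $\Lambda_f(\bm\varphi)=\lim_n\Lambda_f(\bm\varphi^{(n)})\le L$, so $\sup_{f\in C^1}\Lambda_f(\bm\varphi)\le L$.

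The remaining task is the matching lower bound $S(\bm\varphi)\le\sup_f\Lambda_f(\bm\varphi)$, obtained by approximating the formal maximizer $f^\ast=\dot\varphi/B_{i_t}^{2}(\varphi_t)$ by $C^1$ functions. Applying \lemref{zeroder} to each vertex value shows $\dot\varphi=0$ a.e.\ on $\{t:B_{i_t}^{2}(\varphi_t)=0\}$, so $f^\ast$ is an unambiguously defined measurable function with $\int(f^\ast)^2 B^2\,dt=2S(\bm\varphi)$. Mollifying the truncations $f^\ast\chi_{\{1/k\le B^2\le k\}}\wedge k$ produces $f_k\in C^1$ with $\Lambda_{f_k}(\bm\varphi)\to S(\bm\varphi)$, uniformly in whether $S(\bm\varphi)$ is finite or infinite; combined with the previous paragraph this gives $S(\bm\varphi)\le L$, which is the desired lower semicontinuity.

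The hard part will be the approximation step in the third paragraph: at the (potentially uncountable, Cantor-like) set of vertex-crossing times the edge label $i_t$ jumps, so the mollification of $f^\ast$ has to be carried out consistently across the different edges involved. \lemref{zeroder} is what makes this feasible, because it concentrates the entire integrand on the open set $\{t:\bm\varphi_t$ lies in the interior of some edge$\}$, on whose compact subsets $B^2$ is bounded away from zero and standard mollification is routine; the contribution from the complementary closed set is shown to vanish, so the $f_k$ need only be built edge by edge.
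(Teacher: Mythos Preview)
Your approach is correct and genuinely different from the paper's. The paper proceeds by a direct discretization: it invokes the Riesz--Nagy identity $\int_a^b|\dot\varphi|^2=\sup\sum|\varphi_{t_i}-\varphi_{t_{i-1}}|^2/(t_i-t_{i-1})$, first to see that the limit $\varphi$ is absolutely continuous with square-integrable derivative, and then, on each open subinterval of $U_\gamma=\{t:B^2_{i_t}(\varphi_t)>\gamma\}$, freezes $B^2$ on a fine partition and passes to the limit term by term before sending $\gamma\downarrow0$. Your argument instead exhibits $S$ as $\sup_{f\in C^1}\Lambda_f$ and uses that each $\Lambda_f$ is $\rho_{0,T}$-continuous; this is more conceptual and avoids the partition bookkeeping, at the cost of the extra approximation step $\sup_f\Lambda_f(\bm\varphi)\ge S(\bm\varphi)$. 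Both routes ultimately rely on \lemref{zeroder} to dispose of the vertex set.

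Two small remarks. First, the continuity of $\Lambda_f$ hinges on $(i,H)\mapsto B_i^2(H)$ being continuous on all of $\Gamma$, in particular at the vertices. This is true---near an extremum $B_i^2(H)\asymp H-H(\bm{\mathrm x}_k)$, and near a saddle $T_i(H)\to\infty$ while $\oint|\nabla H^*\sigma|^2/|\nabla H|\,dl$ stays bounded, so $B_i^2(H)\to0$---but \lemref{lipschitz} as stated only gives regularity on closed subintervals of the \emph{interior} of $H(I_i)$, so you should supply this short argument rather than cite that lemma. Second, the ``hard part'' you flag in the last paragraph is not really there: the test functions $f$ are scalar-valued on $[0,T]$ and carry no edge label, so mollifying the bounded truncations $g_k=(-k)\vee\bigl(f^\ast\chi_{\{B^2\ge1/k\}}\bigr)\wedge k$ is ordinary one-dimensional mollification; dominated convergence (using $|f_k|\le k$, $\dot\varphi\in L^1$, and $B^2$ bounded on the range) then gives $\Lambda_{f_k}(\bm\varphi)\to\int g_k\dot\varphi-\tfrac12\int B^2g_k^2\to S(\bm\varphi)$ without any edge-by-edge construction.
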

\begin{proof}
Suppose that $\bm{\varphi}^{(l)}\to\bm{\varphi}$ in $\textbf{C}\left([0,T],\Gamma\right)$.
It is sufficient to consider the case where $\liminf_{l\to\infty}S(\bm{\varphi}^{(l)})$ is finite. 
For $l$ large enough, $\pl$ is uniformly bounded by some $M_1\in\mathbb R$. Consequently, $B_{i^{(l)}}^2(\pl)$ is uniformly bounded by some $M_2\in\mathbb R$.
Using the Lemma from \cite{nagy} on page 75, we evaluate
\begin{align*}
    &\sup_{0\leq t_0<... <t_N\leq T}\sum_{i=1}^N\frac{|\varphi_{t_i}-\varphi_{t_{i-1}}|^2}{t_i-t_{i-1}}\\
    =&\sup_{0\leq t_0< ... <t_N\leq T}\lim_{l\to\infty}\sum_{i=1}^N\frac{|\pl_{t_i}-\pl_{t_{i-1}}|^2}{t_i-t_{i-1}}\\
    \leq&\varliminf_{l\to\infty}\sup_{0\leq t_0< ... <t_N\leq T}\sum_{i=1}^N\frac{|\pl_{t_i}-\pl_{t_{i-1}}|^2}{t_i-t_{i-1}}\\
    =&\varliminf_{l\to\infty}\int_0^T|\dot\varphi^{(l)}|^2\\
    \leq&\ 2M_2\varliminf_{l\to\infty}S(\bm{\varphi}^{(l)})<\infty,
\end{align*}
which implies that $\varphi$ is absolutely continuous and has square integrable derivative. 
Now fix $\gamma>0$, and define $U_\gamma=\{t:B_{i_t}^2(\varphi_t)>\gamma\}$. This is an open set that is a union of at most countably many open intervals $\{\Tilde I_j:j\geq1\}$ inside $[0,T]$. 
Take $L$ such that $B_i^2(H)$ is Lipschitz continuous on $[\gamma,M_2]$ with constant $L$ uniformly in $i$. 
Then, by the uniform continuity of $\varphi$, for each $\e>0$, there exists $\delta>0$ such that $|\varphi_t-\varphi_s|<\gamma\e/L$ whenever $t,s\in \Tilde I_j$ and $|t-s|<\delta$. 
Notice that on a single interval $\Tilde I_j$, $i_t$ remains the same.
Hence $t,s\in \Tilde I_j$ and $|t-s|<\delta$ implies $$\frac{B_{i_s}^2(\varphi_s)}{B_{i_t}^2(\varphi_t)}\leq 1+\e.$$ 
Take $n$ large enough so that $T/n<\delta$, and, for each $\Tilde I_j=(a,b)$, let $a_k=a+\frac{k}{n}(b-a)$. Again, $i_t$ remains the same on $\Tilde I_j$, so
\begin{align*}
    \int_a^b\frac{|\dot\varphi_t|^2}{B_i^2(\varphi_t)}dt&\leq(1+\e)\sum_{k=0}^{n-1}\frac{1}{B^2_{i}(\varphi_{a_k})}\int_{a_k}^{a_{k+1}}|\dot\varphi_t|^2dt\\
    &=(1+\e)\sum_{k=0}^{n-1}\frac{1}{B^2_i(\varphi_{a_k})}\sup_{a_k\leq t^0_k<...<t^{M_k}_k\leq a_{k+1}}\sum_{i=1}^{M_k}\frac{|\varphi_{t_k^i}-\varphi_{t_k^{i-1}}|^2}{t_k^i-t_k^{i-1}}\\
    &=(1+\e)\sum_{k=0}^{n-1}\frac{1}{B_i^2(\varphi_{a_k})}\sup_{a_k\leq t^0_k<...<t^{M_k}_k\leq a_{k+1}}\lim_{l\to\infty}\sum_{i=1}^{M_k}\frac{|\pl_{t_k^i}-\pl_{t_k^{i-1}}|^2}{t_k^i-t_k^{i-1}}\\
    &\leq(1+\e)\sum_{k=0}^{n-1}\frac{1}{B^2_i(\varphi_{a_k})}\varliminf_{l\to\infty}\sup_{a_k\leq t^0_k<...<t^{M_k}_k\leq a_{k+1}}\sum_{i=1}^{M_k}\frac{|\pl_{t_k^i}-\pl_{t_k^{i-1}}|^2}{t_k^i-t_k^{i-1}}\\
    &=(1+\e)\sum_{k=0}^{n-1}\frac{1}{B_i^2(\varphi_{a_k})}\varliminf_{l\to\infty}\int_{a_k}^{a_{k+1}}|\dot\varphi_t^{(l)}|^2\\
    &\leq(1+\e)^2\sum_{k=0}^{n-1}\varliminf_{l\to\infty}\int_{a_k}^{a_{k+1}}\frac{|\dot\varphi_t^{(l)}|^2}{B_i^2(\pl_t)}dt\\
    &\leq(1+\e)^2\varliminf_{l\to\infty}\int_a^b\frac{|\dot\varphi_t^{(l)}|^2}{B_i^2(\pl_t)}dt.
\end{align*}
Since $\e$ was chosen arbitrarily, we obtain $\int_a^b\frac{|\dot\varphi_t|^2}{B_i^2(\varphi_t)}dt\leq\liminf_{l\to\infty}\int_a^b\frac{|\dot\varphi_t^{(l)}|^2}{B_i^2(\pl_t)}dt$, which implies that $\int_{U_\gamma}\frac{|\dot\varphi_t|^2}{B_{i_t}^2(\varphi_t)}dt\leq\liminf_{l\to\infty}\int_0^T\frac{|\dot\varphi_t^{(l)}|^2}{B_{i_t}^2(\pl_t)}dt$. 
Since this is true for every $\gamma>0$, we have $$\int_{U}\frac{|\dot\varphi_t|^2}{B_{i_t}^2(\varphi_t)}dt\leq\varliminf_{l\to\infty}\int_0^T\frac{|\dot\varphi_t^{(l)}|^2}{B_{i_t}^2(\pl_t)}dt,$$
where $U=\{t:B_{i_t}^2(\varphi_t)>0\}$. 
By convention $0/0=0$ and Lemma \ref{zeroder}, we have $$\int_{[0,  T]\setminus U}\frac{|\dot\varphi_t|^2}{B_{i_t}^2(\varphi_t)}dt=0,$$ which implies that $$S(\bm{\varphi})=\frac{1}{2}\int_{0}^T\frac{|\dot\varphi_t|^2}{B_{i_t}^2(\varphi_t)}dt\leq\varliminf_{l\to\infty}\frac{1}{2}\int_0^T\frac{|\dot\varphi_t^{(l)}|^2}{B_{i_t}^2(\pl_t)}dt=\varliminf_{l\to\infty}S(\bm{\varphi}^{(l)}).$$So $S$ is lower semicontinuous.
\end{proof}
\begin{lemma}
\label{precom}
The set $\Phi_\delta(s)=\left\{r(\bm{\varphi}_0,Y(x_0))\leq\delta:S(\bm{\varphi})\leq s\right\}$ is precompact in $\textbf{C}\left([0,T],\Gamma\right)$ for any $\delta\geq0$, $s\geq0$.
\end{lemma}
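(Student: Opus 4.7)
The plan is to invoke the Arzelà--Ascoli theorem on the metric space $(\textbf{C}([0,T],\Gamma),\rho_{0,T})$, which reduces the task to verifying two standard conditions: (i) pointwise precompactness, i.e.\ for each $t\in[0,T]$ the set $\{\bm{\varphi}_t:\bm{\varphi}\in\Phi_\delta(s)\}$ lies in a compact subset of $\Gamma$; and (ii) uniform equicontinuity of $\Phi_\delta(s)$.

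The first step will be to establish the uniform growth estimate $B_i^2(H)\leq C(1+H)$ on $H(I_i)$ with $C$ independent of $i$. Assumptions 1 and 5 will give $|\nabla H(x)^*\sigma(x)|^2\leq C_1(1+|x|^2)$ via boundedness of the Hessian of $H$ and of $\sigma$, while Assumption 2 together with continuity on the remaining compact region yields $1+|x|^2\leq C_2(1+H(x))$ on $\mathbb{R}^2$. Since $B_i^2(H)$ is by definition the weighted average of $|\nabla H^*\sigma|^2$ along $C_i(H)$ against the measure $dl/|\nabla H|$, the pointwise bound transfers.

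The second step bootstraps this into a uniform bound on $\varphi$ itself. For $\bm{\varphi}\in\Phi_\delta(s)$ the finiteness of $S(\bm{\varphi})$ forces $\varphi$ to be absolutely continuous with $\int_0^T|\dot\varphi_u|^2/B_{i_u}^2(\varphi_u)\,du\leq 2s$. The inequality $|\varphi_t-\varphi_0|\leq r(\bm{\varphi}_t,\bm{\varphi}_0)\leq\int_0^t|\dot\varphi_u|\,du$ (the graph distance is bounded by the traversed length, which in turn equals the total variation of $\varphi$) combined with Cauchy--Schwarz and Step 1 gives
\[
\varphi_t-\varphi_0\ \leq\ \sqrt{\,2s\int_0^t B_{i_u}^2(\varphi_u)\,du\,}\ \leq\ \sqrt{2sCT(1+M_T)},
\]
where $M_T=\sup_{[0,T]}\varphi$. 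Since $r(\bm{\varphi}_0,Y(x_0))\leq\delta$ makes $\varphi_0$ uniformly bounded, taking $\sup$ over $t\in[0,T]$ and solving the resulting quadratic inequality in $\sqrt{M_T}$ produces a uniform bound $M_T\leq M^*(\delta,s,T)$; together with the lower bound $\varphi_t\geq\inf_{\mathbb{R}^2}H$ (finite since $H\to+\infty$ at infinity), the trajectories stay in a compact subset of $\Gamma$ (which has finitely many edges by Assumption 3), settling (i).

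For (ii), on the bounded range $[\inf H,M^*]$ the continuous functions $B_i^2$ admit a uniform upper bound $M_0<\infty$, so the same Cauchy--Schwarz estimate yields
\[
r(\bm{\varphi}_t,\bm{\varphi}_s)\ \leq\ \int_s^t|\dot\varphi_u|\,du\ \leq\ \sqrt{2sM_0(t-s)},
\]
i.e.\ uniform Hölder-$1/2$ equicontinuity. Arzelà--Ascoli then delivers the claim. The main obstacle is the implicit feedback in Step 2: since $B^2$ may a priori grow with $H$, one cannot bound $\varphi$ from the action alone, and a super-linear growth of $B^2$ would leave the quadratic inequality for $M_T$ without a finite solution. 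The at-most-linear growth from Step 1 is precisely what permits the feedback loop to close.
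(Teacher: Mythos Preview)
Your proposal is correct and follows essentially the same route as the paper: both establish the linear growth $B_i^2(H)\lesssim 1+H$ from Assumptions~1, 2, 5, combine it with Cauchy--Schwarz and the action bound to obtain uniform boundedness, then use the resulting uniform bound on $B^2$ to get H\"older-$\tfrac{1}{2}$ equicontinuity, and conclude by Arzel\`a--Ascoli. The only cosmetic difference is that the paper closes the boundedness step by a contradiction argument using level crossings at $M_2$ and $2M_2$, whereas you solve the self-referential quadratic inequality directly; also, in your equicontinuity line the symbol $s$ is overloaded (action level vs.\ time variable), and the appeal to continuity of $B_i^2$ there is unnecessary since your Step~1 bound already gives $M_0=C(1+M^*)$.
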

\begin{proof}
We first prove the uniform boundedness of $\Phi_\delta(s)$. 
Since $H$ has bounded second derivatives, $|\nabla H(x)|$ is Lipschitz continuous with coefficient $L>0$. 
By assumption 5, we can assume that $\lVert\sigma\rVert<M$.
And due to assumption 2, we can find $M_1>|x_0|$ such that $H(x)\geq A_1|x|^2$ for all $|x|\geq M_1-|x_0|$, $M_1>\frac{2|\nabla H(x_0)|}{L}$, and $L(M_1-|x_0|)>|\nabla H(0)|$.
Notice that
$$|H(x)|\leq H(x_0)+\frac{1}{2}L|x-x_0|^2+|\nabla H(x_0)|\cdot|x-x_0|.$$
Hence, $|H(x)|\geq|H(x_0)|+M_1^2L$ implies that $|x-x_0|\geq M_1$, and therefore $|H(x)|\geq A_1|x|^2$. 
Let $M_2=|H(x_0)|+\delta+1+M_1^2L+\frac{16sM^2TL^2}{A_1}$. 
Suppose that $\Phi_\delta(s)$ is not uniformly bounded, so there exists $\bm{\varphi}=(i_t,\varphi_t)\in\Phi_\delta(s)$  such that $\sup_{[0,T]}|\varphi|>2M_2$. Let $\tau=\inf\{t:|\varphi_t|=2M_2\}>0$ and $\tau_1=\sup\{t<\tau,|\varphi_t|=M_2\}>0$. 
For $t\in[\tau_1,\tau]$, $|\varphi_t|\geq M_2\geq|H(x_0)|+M_1^2L$, which implies that $|x-x_0|\geq M_1$ for all $x\in C_{i_t}(\varphi_t)$. 
Thus we obtain
$$B_{i_t}^2(\varphi_t)\leq\sup_{x\in C_{i_t}(\varphi_t)}|\nabla H^*(x)\sigma(x)|^2\leq M^2\sup_{x\in C_{i_t}(\varphi_t)}(L|x|+|\nabla H(0)|)^2\leq\frac{4M^2L^2}{A_1}|\varphi_t|.$$ 
Thus
\begin{align*}
    M_2^2&=|\varphi_\tau-\varphi_{\tau_1}|^2\\
    &=\left|\int_{\tau_1}^\tau\dot\varphi_tdt\right|^2\\
    &\leq\int_{\tau_1}^\tau\frac{|\dot\varphi_t|^2}{B_{i_t}^2(\varphi_t)}dt\cdot\int_{\tau_1}^\tau B_{i_t}^2(\varphi_t)dt\\
    &\leq 2s\cdot T\cdot\frac{4M^2L^2}{A_1}\cdot 2M_2\\
    &=\frac{16M^2sTL^2}{A_1}M_2.
\end{align*}
However, by definition, $M_2>\frac{16M^2sTL^2}{A_1}$, which leads to a contradiction. 
Next, we prove the equicontinuity of $\Phi_\delta(s)$. 
Since $\Phi_\delta(s)$ is uniformly bounded, we assume that $B_{i_t}^2(\varphi_t)\leq M_3$ for each $\varphi\in\Phi_\delta(s)$. 
Instantly, we obtain
$$r(\bm{\varphi}_{t+h},\bm{\varphi}_t)^2\leq\left(\int_t^{t+h}|\dot\varphi_t|dt\right)^2\leq\int_t^{t+h}\frac{|\dot\varphi_t|^2}{B_{i_t}^2(\varphi_t)}dt\cdot\int_t^{t+h}B_{i_t}^2(\varphi_t)dt\leq h\cdot 2sM_3.$$
Thus $$r(\bm{\varphi}_{t+h},\bm{\varphi}_t)\leq\sqrt{2sM_3}\cdot\sqrt{h}.$$
The precompactness of $\Phi_\delta(s)$ now follows from the Arzelà–Ascoli theorem.
\end{proof}
\begin{theorem}
\label{compactness}
The set $\Phi_\delta(s)=\left\{r(\bm{\varphi_0},Y(x_0))\leq\delta:S(\varphi)\leq s\right\}$ is compact in $\textbf{C}\left([0,T],\mathbb R\right)$ for any $\delta\geq0$, $s\geq0$.
\end{theorem}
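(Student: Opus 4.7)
The plan is to combine the two preceding lemmas—lower semicontinuity of $S$ from Lemma~\ref{semic} and precompactness of $\Phi_\delta(s)$ from Lemma~\ref{precom}—to upgrade precompactness to compactness by exhibiting closedness of the sublevel set. Since a subset of a metric space that is both precompact (relatively compact) and closed is automatically compact, only closedness remains to be verified.

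To verify closedness, I would take an arbitrary sequence $\{\bm{\varphi}^{(l)}\}\subset \Phi_\delta(s)$ converging uniformly to some $\bm{\varphi}\in\mathbf{C}([0,T],\Gamma)$ and check that the limit lies in $\Phi_\delta(s)$. Two conditions must be preserved in the limit:
\begin{enumerate}
\item[(i)] The initial-point constraint $r(\bm{\varphi}_0,Y(x_0))\leq\delta$ passes to the limit because uniform convergence yields pointwise convergence at $t=0$ and $r$ is continuous on $\Gamma$, so $r(\bm{\varphi}_0,Y(x_0)) = \lim_{l\to\infty} r(\bm{\varphi}^{(l)}_0,Y(x_0)) \leq \delta$.
\item[(ii)] The action bound $S(\bm{\varphi})\leq s$ follows from Lemma~\ref{semic}: by lower semicontinuity,
\[
S(\bm{\varphi}) \leq \varliminf_{l\to\infty} S(\bm{\varphi}^{(l)}) \leq s.
\]
\end{enumerate}
Together, (i) and (ii) show $\bm{\varphi}\in\Phi_\delta(s)$, so $\Phi_\delta(s)$ is closed.

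Finally, Lemma~\ref{precom} gives that $\Phi_\delta(s)$ has compact closure in $\mathbf{C}([0,T],\Gamma)$. Combined with closedness, this yields compactness. There is no substantive obstacle here: all the analytic difficulty has been discharged in Lemma~\ref{semic} (handling the awkward behavior of $B_{i_t}^2(\varphi_t)$ near its zeros via the partition into $U_\gamma$ and the Riesz–Sz.-Nagy characterization of $H^1$) and in Lemma~\ref{precom} (using the quadratic growth of $H$ to force uniform boundedness, followed by Cauchy–Schwarz to give equicontinuity via the action bound). The present theorem is simply the packaging step that combines these into the compactness statement needed for Puhalskii's theorem.
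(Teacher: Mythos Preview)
Your proposal is correct and matches the paper's own proof, which simply states that the theorem is a direct consequence of Lemmas~\ref{semic} and~\ref{precom}. You have merely spelled out the standard ``closed plus relatively compact implies compact'' argument that the paper leaves implicit.
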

\begin{proof}
This is a direct consequence of Lemmas \ref{semic} and \ref{precom}.
\end{proof}

\textbf{Part 2.} We make two claims about the behavior of the random process during a short period of time.

\begin{lemma}
Let the assumptions of Lemma \ref{lowerbound} be satisfied and event $A_k$ be defined as in the proof of Lemma~\ref{lowerbound}. Then, for $\e$ sufficiently small,$${\bm{\mathrm P}}\left(\Omega\setminus A_k\right)\leq\exp\left(-\frac{n{\delta'}^2}{64M\e^{\beta}}\right),$$
for every initial point $X_k^\e=x$ that satisfies $|H(x)-\varphi_\kk|<h$.
\label{ak}
\end{lemma}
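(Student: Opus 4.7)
The plan is to decompose $\Omega\setminus A_k$ by a union bound into two types of bad events: (a) for each $p\in\{0,\ldots,\lfloor 1/(nT^\e_k)\rfloor\}$, the stochastic integral of $\sigma$ over the $p$-th rotation sub-interval exceeds $M\e^{(1-\beta)/4}$; and (b) the single event that the increment $H(X^\e_{\kk+t})-H(X^\e_\kk)$ exceeds $\delta'/2$ somewhere on $[0,1/n]$. Each piece is bounded via the exponential martingale inequality, or equivalently by the Dambis--Dubins--Schwarz time change combined with the reflection principle. Throughout, $T^\e_k=\e^{1-\beta}T(H(X^\e_\kk))$ is $\mathcal F_\kk$-measurable and, on the conditioning event $|H(X^\e_\kk)-\varphi_\kk|<h$, condition (3) in the proof of Lemma~\ref{loclem} guarantees that $m\e^{1-\beta}\leq T^\e_k\leq M\e^{1-\beta}$, so $\lfloor 1/(nT^\e_k)\rfloor+1\leq 2/(nm\e^{1-\beta})$.

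For type (a), the martingale $\e^{\beta/2}\int_{\kk+pT^\e_k}^t\sigma(X_s^\e)dW_s$ is vector-valued, but each scalar component has quadratic variation bounded by $\e^\beta M^2 T^\e_k\leq M^3\e$ using the uniform bound $|\sigma|<M$ (condition (1), which holds on all of $\mathbb R^2$ and so requires no localization). Time-changing to a Brownian motion and applying the reflection principle gives, for each $p$, a bound of the form $C\exp(-c\e^{-(1+\beta)/2})$ for constants independent of $p$ and $\e$. Summing the $O(\e^{\beta-1})$ sub-intervals yields a total contribution of $O(\e^{\beta-1})\exp(-c\e^{-(1+\beta)/2})$; since $(1+\beta)/2>\beta$ for $\beta<1$, this is much smaller than $\tfrac{1}{2}\exp(-n\delta'^2/(64M\e^\beta))$ for $\e$ small.

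For type (b), I would localize with $\tau=\inf\{t\geq \kk:|H(X^\e_t)-H(X^\e_\kk)|\geq\delta'/2\}\wedge\kl$. On $[\kk,\tau]$ we have $|H(X^\e_s)-\varphi_\kk|<\delta'/2+h<\delta'$, so $X^\e_s\in C$ and conditions (1)--(2) yield $|AH|<M$ and $|\nabla H^*\sigma|^2<M$ along the stopped trajectory. Ito's formula for $H(X^\e_{\cdot\wedge\tau})$ then gives a drift term bounded by $M\e^\beta/n<\delta'/4$ for $\e$ small, so if event (b) fails the stopped martingale $\e^{\beta/2}\int_\kk^{\cdot\wedge\tau}\rcoe\,dW_s$ must reach $\delta'/4$. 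Its quadratic variation is deterministically bounded by $M\e^\beta/n$, so the same time-change plus reflection argument produces a bound of the order $\exp(-n\delta'^2/(32M\e^\beta))$, hence at most $\tfrac{1}{2}\exp(-n\delta'^2/(64M\e^\beta))$ for $\e$ small. Combining (a) and (b) yields the claim. The main subtlety is making sure the quadratic variation is deterministically controlled before invoking reflection: for (a) this is free from the global bound on $\sigma$, but for (b) the estimate $|\nabla H^*\sigma|^2<M$ only holds on $C$, so the localization at $\tau$ is essential, and one must verify that on $\{t<\tau\}$ the process stays in $C$ using the hypothesis $h<\delta'$ and the chosen constraint $\delta'<\delta$.
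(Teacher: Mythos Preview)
Your proposal is correct and follows essentially the same approach as the paper: a union bound splitting $\Omega\setminus A_k$ into the rotation-interval events (your type (a)) and the single $H$-increment event (your type (b)), each handled by DDS time change plus the reflection principle, with the number of rotation intervals bounded via $T^\e_k\geq m\e^{1-\beta}$. Your explicit localization at $\tau$ for type (b) is in fact cleaner than the paper's somewhat implicit passage from the event $\{X_t^\e\in C\}$ to the unconditional Brownian-motion estimate; otherwise the two arguments are the same.
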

\begin{proof}
Without loss of generality, we assume that $k=0$ in the proof. 
On the interval $[0,\frac{1}{n}]$, we have $|AH(X_t^\e)|<M$, so the increment in $H(X_t^\e)$ due to the ordinary integral is negligible in the sense that
\begin{align*}
    |H(X_{t}^\e)-H(X_0^\e)|\leq&\ \e^\beta\int_0^{t}|AH(X_s^\e)|ds+\e^{\beta/2}\left|\int_0^{t}\rcoe dW_s\right|\\
    \leq&\ \e^\beta\cdot\frac{M}{n}+\e^{\beta/2}\left|\int_0^{t}\rcoe dW_s\right|.
\end{align*}
Hence by recalling that, in Lemma~\ref{lowerbound}, $C$ was defined as a compact set contained in $D_i$, for $\e$ small enough, we have that
\begin{align*}
    &\ {\bm{\mathrm P}}(\sup_{0\leq t\leq \frac{1}{n}}\left|H(X_{t}^\e)-H(X_0^\e)\right|\leq\frac{\delta'}{2})\\
    \geq&\ {\bm{\mathrm P}}(\sup_{0\leq t\leq \frac{1}{n}}\e^{\beta/2}\left|W^{(1)}(\int_0^{t}\left|\rcoe\right|^2ds\right|)\leq\frac{\delta'}{4},X_t^\e\in C)\\
    \geq&\ {\bm{\mathrm P}}(\sup_{0\leq t\leq\frac{M}{n}}\lvert W^{(1)}_t\rvert\leq\frac{\delta'}{4\e^{\beta/2}},X_t^\e\in C)\\
    =&\ {\bm{\mathrm P}}(\sup_{0\leq t\leq\frac{M}{n}}\lvert W^{(1)}_t\rvert\leq\frac{\delta'}{4\e^{\beta/2}})\\
    \geq&\ 1-\frac{8\sqrt{2M\e^\beta}}{\sqrt{\pi n}\cdot\delta'}\exp(-\frac{n{\delta'}^2}{32M\e^\beta})=:1-\lambda_1(\e),
\end{align*} where $W_t^{(1)}$ is a one-dimensional Wiener process. Equivalently, we have
$${\bm{\mathrm P}}\left(\sup_{0\leq t\leq \frac{1}{n}}\left|H(X_{t}^\e)-H(X_0^\e)\right|>\frac{\delta'}{2}\right)\leq\lambda_1(\e).$$

Let $\sigma_i$ denote the $i$-th row of $\sigma$. For $\e$ small enough,
\begin{align*}
    &{\bm{\mathrm P}}\left(\sup_{pT^\e_0\leq t\leq (p+1)T^\e_0}\e^{\beta/2}\left|\int_{pT^\e_0}^t \sigma(X_s^\e)dW_s\right|>M\cdot\e^{\frac{1-\beta}{4}}\right)\\
    \leq\ &{\bm{\mathrm P}}\left(\bigcup_{j=1}^{2}\{\sup_{pT^\e_0\leq t\leq (p+1)T^\e_0}\e^{\beta/2}\left|\int_{pT^\e_0}^t \sigma_j(X_s^\e)dW_s\right|>\frac{M\cdot\e^{\frac{1-\beta}{4}}}{\sqrt{2}}\}\right)\\
    \leq\ &\sum_{j=1}^{2}{\bm{\mathrm P}}\left(\sup_{pT^\e_0\leq t\leq (p+1)T^\e_0}\e^{\beta/2}\left|\int_{pT^\e_0}^t \sigma_j(X_s^\e)dW_s\right|>\frac{M\cdot\e^{\frac{1-\beta}{4}}}{\sqrt{2}}\right)\\
    \leq\ &\sum_{j=1}^{2}{\bm{\mathrm P}}\left(\sup_{pT^\e_0\leq t\leq (p+1)T^\e_0}\left|W^{(2)}_j(\e^\beta\int_{pT^\e_0}^t{\lvert\sigma_j(X_s^\e)\rvert}^2ds)\right|>\frac{M\cdot\e^{\frac{1-\beta}{4}}}{\sqrt{2}}\right)\\
    \leq\ &\sum_{j=1}^{2}{\bm{\mathrm P}}\left(\sup_{0\leq t\leq M^2T^\e_0\e^\beta}\left|W^{(2)}_j(t)\right|>\frac{M\cdot\e^{\frac{1-\beta}{4}}}{\sqrt{2}}\right)\\
    \leq\ &4\sum_{j=1}^{2}{\bm{\mathrm P}}\left(W_j^{(2)}(M^2T^\e_0\e^\beta)>\frac{M\cdot\e^{\frac{1-\beta}{4}}}{\sqrt{2}}\right)\\
    \leq\ &4\sum_{j=1}^{2}\frac{\sqrt{T(H(X_0^\e))}\cdot\e^{\frac{1+\beta}{4}}}{\sqrt\pi}\exp\left(-\frac{1}{4T(H(X_0^\e))\e^{\frac{1+\beta}{2}}}\right)\\
    =\ &\frac{8\sqrt{T(H(X_0^\e))}\cdot\e^{\frac{1+\beta}{4}}}{\sqrt\pi}\exp\left(-\frac{1}{4T(H(X_0^\e))\e^{\frac{1+\beta}{2}}}\right),\\
\end{align*}
where $W_j^{(2)}$ is a one-dimensional Wiener process for each $j$. Particularly, we conclude that
\begin{align*}
\ &{\bm{\mathrm P}}\left(\sup_{pT^\e_0\leq t\leq (p+1)T^\e_0}\e^{\beta/2}\left|\int_{pT^\e_0}^t \sigma(X_s^\e)dW_s\right|>M\cdot\e^{\frac{1-\beta}{4}}\right)\\
\leq\ &\frac{8\sqrt{T(H(X_0^\e))}\cdot\e^{\frac{1+\beta}{4}}}{\sqrt\pi}\exp\left(-\frac{1}{4T(H(X_0^\e))\e^{\frac{1+\beta}{2}}}\right)\\
\leq\ &\frac{8\sqrt{M}\cdot\e^{\frac{1+\beta}{4}}}{\sqrt\pi}\exp\left(-\frac{1}{4m\e^{\frac{1+\beta}{2}}}\right)=:\lambda_2(\e)<\lambda_1(\e).
\end{align*}
Since $|H(x)-\varphi_0|<h$, $T^\e_0\geq m\e^{1-\beta}$, by the discussions above, we know that the event $A_0$ happens with overwhelming probability when $\e$ is small enough, in the sense that
\begin{align*}
    &{\bm{\mathrm P}}\left(\Omega\setminus A_0\right)\\
    \leq\ &\lambda_1(\e)+{\bm{\mathrm P}}\left(\bigcup_{p=0}^{\lfloor\frac{1}{nT^\e_0}\rfloor}\sup_{pT^\e_0\leq t\leq (p+1)T^\e_0}\e^{\beta/2}\left|\int_{pT^\e_0}^t \sigma(X_s^\e)dW_s\right|>M\cdot\e^{\frac{1-\beta}{4}}\right)\\
    \leq\ &\lambda_1(\e)+{\bm{\mathrm P}}\left(\bigcup_{p=0}^{\lfloor\frac{1}{mn\e^{1-\beta}}\rfloor}\sup_{pT^\e_0\leq t\leq (p+1)T^\e_0}\e^{\beta/2}\left|\int_{pT^\e_0}^t \sigma(X_s^\e)dW_s\right|>M\cdot\e^{\frac{1-\beta}{4}}\right)\\
    \leq\ &\frac{2}{mn\e^{1-\beta}}\cdot\lambda_1(\e)\\
    =\ &\frac{2}{mn\e^{1-\beta}}\cdot\frac{8\sqrt{2M\e^\beta}}{\sqrt{\pi n}\cdot\delta'}\exp\left(-\frac{n{\delta'}^2}{32M\e^\beta}\right)\\
    \leq\ &\exp\left(-\frac{n{\delta'}^2}{64M\e^{\beta}}\right).
\end{align*}
\end{proof}

\begin{lemma}
Let the assumptions in Lemma~\ref{condition3} be satisfied and events $A_k$, $B_k$ be defined as in the proof of Lemma~\ref{condition3}.
Then, for $\e$ sufficiently small,$${\bm{\mathrm P}}\left((\Omega\setminus A_k)\cap B_k\right)\leq\exp\left(-\frac{n{\delta'}^2}{64M\e^{\beta}}\right).$$
\label{ak2}
\end{lemma}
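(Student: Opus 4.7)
The strategy is to mirror the proof of Lemma~\ref{ak}, replacing the hypothesis $|H(x)-\varphi_{k/n}|<h$ (which was used there only to guarantee that $T^\e_k\geq m\e^{1-\beta}$ and that the process lies in a region where $|\nabla H^*\sigma|^2\leq M$) by the event $B_k=\{X^\e_t\in C,\ t\in[k/n,(k+1)/n]\}$. On $B_k$, by properties (1)--(3) from the proof of Lemma~\ref{condition3}, we have $m<T_i(H(X^\e_t))<M$, $|\nabla H^*\sigma(X^\e_t)|^2<M$, and $|AH(X^\e_t)|<M$ throughout the interval, so all the quantitative bounds underlying the estimate of $\mathbf{P}(\Omega\setminus A_k)$ remain available.

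The plan is to split $\Omega\setminus A_k$ into the two contributions constituting the definition of $A_k$, namely the ``slice'' oscillation bound $\sup_{0\leq t\leq 1/n}|H(X^\e_{k/n+t})-H(X^\e_{k/n})|\leq\delta'/2$ and the rotation-period stochastic integral bounds over each $[k/n+pT^\e_k,\,k/n+(p+1)T^\e_k]$. For the first, I would use the Ito decomposition of $H(X^\e_t)$, absorb the $\e^\beta AH$ drift into a negligible term, represent the martingale part as a time-changed Brownian motion $\tilde W(\langle Z\rangle_t)$ whose time change satisfies $\langle Z\rangle_{1/n}\leq M\e^\beta/n$ on $B_k$, and then apply the reflection principle to produce the same bound $\lambda_1(\e)$ as in Lemma~\ref{ak}.

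For the second, each per-rotation integral $\e^{\beta/2}|\int_{k/n+pT^\e_k}^t\sigma(X^\e_s)dW_s|$ is, coordinate by coordinate, a time-changed Brownian motion with clock bounded by $M^2T^\e_k\e^\beta\leq M^3\e$ on $B_k$. The reflection principle again yields the bound $\lambda_2(\e)$ for each of the at most $\lfloor 1/(nT^\e_k)\rfloor+1\leq 2/(mn\e^{1-\beta})$ such intervals (the denominator using $T^\e_k\geq m\e^{1-\beta}$ on $B_k$). A union bound then delivers exactly the prefactor $2/(mn\e^{1-\beta})$ multiplying $\lambda_1(\e)$, as in the final display of Lemma~\ref{ak}'s proof, yielding $\mathbf{P}((\Omega\setminus A_k)\cap B_k)\leq\exp(-n{\delta'}^2/(64M\e^\beta))$ for $\e$ small.

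There is no real obstacle here: the argument is essentially a transcription of Lemma~\ref{ak}'s proof, with the intersection with $B_k$ being the mechanism that provides the uniform constants needed on the path. The only point requiring care is verifying that each instance of the constants $m$, $M$ from the proof of Lemma~\ref{condition3} applies pathwise on $B_k$ (as opposed to being a consequence of staying near $\varphi_{k/n}$), which is immediate from the definition of $C$ and of those constants.
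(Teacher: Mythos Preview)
Your proposal is correct and follows precisely the approach the paper intends: the paper's own proof reads, in its entirety, ``The proof is almost the same as that of Lemma~\ref{ak},'' and your outline makes explicit how intersecting with $B_k$ replaces the starting-point hypothesis $|H(x)-\varphi_{k/n}|<h$ as the source of the uniform bounds $m<T^\e_k/\e^{1-\beta}<M$, $|\nabla H^*\sigma|^2<M$, $|AH|<M$ needed in each step. There is nothing to add.
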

\begin{proof}
The proof is almost the same as that of Lemma \ref{ak}.
\end{proof}

\textbf{Part 3.} We formulate the next two lemmas to prepare for the proof of Lemma~\ref{extreme}.
Lemma~\ref{positivedrift} shows that the random process that starts at an exterior vertex does not get stuck there; and Lemma~\ref{zeroderivative} shows that the trajectory that starts at an exterior vertex does not escape too fast. Recall the assumptions of Section 3.2: $\bm{\mathrm  x}_0=(0,0)$ (the origin on $\mathbb R^2$) is a local minimum point and $H(\bm{\mathrm  x}_0)=0$.

\begin{lemma}
With the same assumptions as in Lemma \ref{extreme}, in a small neighborhood $B(\bm{\mathrm x}_0,r)$, we have that, for every $x\in B(\bm{\mathrm x}_0,r)\setminus\{\bm{\mathrm x}_0\}$,
$$\frac{AH(x)}{2\sqrt{H(x)}}-\frac{|\nabla H^*(x)\sigma(x)|^2}{8\sqrt{H(x)^3}}>0.$$
\label{positivedrift}
\end{lemma}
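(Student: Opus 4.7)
The plan is to reduce the strict inequality to a linear algebraic fact through Taylor expansion at $\bm{\mathrm x}_0=\bm 0$. Multiplying both sides by $8H(x)^{3/2}>0$ (valid on a punctured neighborhood of $\bm{\mathrm x}_0$ since $H(\bm{\mathrm x}_0)=0$ and the Hessian is positive definite, so $H>0$ there), the claim is equivalent to
$$4H(x)\cdot AH(x)>|\nabla H(x)^*\sigma(x)|^2,\qquad x\in B(\bm{\mathrm x}_0,r)\setminus\{\bm{\mathrm x}_0\}.$$
Let $Q:=\nabla^2 H(\bm{\mathrm x}_0)$ and $A_0:=\sigma(\bm{\mathrm x}_0)\sigma(\bm{\mathrm x}_0)^*$; by assumptions 1, 3 and 5 both are symmetric positive definite $2\times 2$ matrices. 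Using $H(\bm{\mathrm x}_0)=0$, $\nabla H(\bm{\mathrm x}_0)=\bm 0$, and Taylor's theorem (with the higher derivatives locally bounded by assumption~1), I would expand $H(x)=\tfrac12 x^*Qx+O(|x|^3)$, $\nabla H(x)=Qx+O(|x|^2)$, and $AH(x)=\tfrac12\operatorname{tr}(\sigma\sigma^*(x)\nabla^2 H(x))=\tfrac12\operatorname{tr}(A_0Q)+O(|x|)$. Substituting, a direct computation gives
$$4H(x)\,AH(x)-|\nabla H(x)^*\sigma(x)|^2=\operatorname{tr}(A_0 Q)\,x^*Qx-x^*QA_0Qx+R(x),$$
with $|R(x)|\le K|x|^3$ on some ball $B(\bm{\mathrm x}_0,r_0)$.

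Next I would bound the leading quadratic form from below by a positive multiple of $|x|^2$. Introducing $y:=Q^{1/2}x$ and $B:=Q^{1/2}A_0Q^{1/2}$ (symmetric and positive definite, being congruent to $A_0$), cyclicity of the trace gives $\operatorname{tr}(B)=\operatorname{tr}(A_0Q)$, while $|y|^2=x^*Qx$ and $y^*By=x^*QA_0Qx$. The leading quadratic form thus becomes $\operatorname{tr}(B)|y|^2-y^*By$, and the key linear-algebraic observation is that for any symmetric positive definite $2\times 2$ matrix $B$ with eigenvalues $\lambda_1,\lambda_2>0$, diagonalizing in an orthonormal eigenbasis yields
$$\operatorname{tr}(B)|y|^2-y^*By=(\lambda_1+\lambda_2)(y_1^2+y_2^2)-(\lambda_1 y_1^2+\lambda_2 y_2^2)=\lambda_2 y_1^2+\lambda_1 y_2^2\ge\lambda_{\min}(B)|y|^2.$$
Combining with $|y|^2=x^*Qx\ge\lambda_{\min}(Q)|x|^2$, the leading term is at least $c_0|x|^2$ with $c_0:=\lambda_{\min}(B)\lambda_{\min}(Q)>0$.

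Finally, choosing $r\le\min(r_0,\,c_0/(2K))$ makes $|R(x)|\le K|x|^3\le\tfrac12 c_0|x|^2$, so $4H(x)\,AH(x)-|\nabla H(x)^*\sigma(x)|^2\ge\tfrac12 c_0|x|^2>0$ on $B(\bm{\mathrm x}_0,r)\setminus\{\bm{\mathrm x}_0\}$, completing the proof. The only mildly delicate point is the linear-algebra step: both $\operatorname{tr}(A_0Q)\,x^*Qx$ and $x^*QA_0Qx$ scale like $|x|^2$, so one must rule out the possibility that their difference degenerates in some direction. The change of variables $y=Q^{1/2}x$ reduces the question to the single inequality $\operatorname{tr}(B)>\lambda_{\max}(B)$, which holds precisely because $B$ has two strictly positive eigenvalues; after that the rest is a standard perturbation argument.
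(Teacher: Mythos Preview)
Your proof is correct and follows the same opening move as the paper: clear denominators to reduce to $4H\,AH>|\nabla H^*\sigma|^2$, then Taylor-expand at $\bm{\mathrm x}_0$. The divergence is in how the leading quadratic term is handled. The paper writes $4H\,AH-|\nabla H^*\sigma|^2=\sum_{i,j}a_{ij}(x)b_{ij}(x)$ with $a=\sigma\sigma^*$ and $b_{ij}=2H\,\partial_i\partial_j H-\partial_iH\,\partial_jH$, computes the entries of $b(x)$ explicitly in coordinates, checks $\det b(x)=O(|x|^5)$, and then uses strict positive-definiteness of $a(x)$ (in the form $|a_{12}|<(1-\gamma)\sqrt{a_{11}a_{22}}$) together with $b_{12}^2<(1+\gamma)^2 b_{11}b_{22}$ to force $\sum a_{ij}b_{ij}>0$. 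You instead freeze $\sigma\sigma^*$ at $\bm{\mathrm x}_0$ (absorbing the $O(|x|)$ variation into the $O(|x|^3)$ remainder), change variables via $y=Q^{1/2}x$, $B=Q^{1/2}A_0Q^{1/2}$, and reduce everything to the single two-dimensional fact $\operatorname{tr}(B)|y|^2-y^*By\ge\lambda_{\min}(B)|y|^2$. Your route is shorter and coordinate-free, and it makes the role of dimension two completely transparent (the inequality $\operatorname{tr}(B)>\lambda_{\max}(B)$ fails in dimension one and is the exact content in dimension two). The paper's route has the minor advantage of never freezing $\sigma\sigma^*$, working with the full $a(x)$ throughout, but at the cost of a more delicate endgame.
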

\begin{proof}
It suffices to prove that $$4H(x)AH(x)-|\nabla H^*(x)\sigma(x)|^2> 0$$ in $B(\bm{\mathrm x}_0,r)$. By Taylor's expansion, in $B(\bm{\mathrm x}_0,r)$ we have $H(x)=a\h x^2+b\h y^2+2c\h x\h y+O(|x|^3)$, where $x=(\h x,\h y)$. The Hessian matrix of $H$ at $\bm{\mathrm x}_0$ is positive-definite, hence $a,b>0$, and $ab-c^2>0$. Since $H$ is four times continuously differentiable, the derivatives can also be approximated in the following way:
\begin{align*}
    \frac{\partial H}{\partial\h x}&=2a\h x +2c\h y+O(|x|^2),\ \ \ \ \ \ \ \ \
    \frac{\partial H}{\partial\h y}=2b\h y +2c\h x+O(|x|^2),\\
    \frac{\partial^2 H}{\partial\h x^2}&=2a+ O(|x|),\ \ \ \ \ 
    \frac{\partial^2 H}{\partial\h y^2}=2b+ O(|x|),\ \ \ \ \
    \frac{\partial^2 H}{\partial\h x\partial\h y}=2c+ O(|x|).
\end{align*}
Define the matrix-valued function 
$$b(x)=
\begin{bmatrix}
2H\frac{\partial^2H}{\partial\h x^2}-\left(\frac{\partial H}{\partial\h x}\right)^2 & 2H\frac{\partial^2H}{\partial\h x\partial\h y}-\frac{\partial H}{\partial\h x}\frac{\partial H}{\partial\h y}\\
2H\frac{\partial^2H}{\partial\h x\partial\h y}-\frac{\partial H}{\partial\h x}\frac{\partial H}{\partial\h y} & 2H\frac{\partial^2H}{\partial\h y^2}-\left(\frac{\partial H}{\partial\h y}\right)^2
\end{bmatrix}.
$$
With the Taylor's expansions, we can compute, for $x\in B(\bm{\mathrm x}_0,r)$ with $r$ sufficiently small,
\begin{align*}
    &2H\frac{\partial^2H}{\partial\h x^2}-\left(\frac{\partial H}{\partial\h x}\right)^2=4\h y^2(ab-c^2)+O(|x|^3)>0,\\
    &2H\frac{\partial^2H}{\partial\h y^2}-\left(\frac{\partial H}{\partial\h y}\right)^2=4\h x^2(ab-c^2)+O(|x|^3)>0,\\
    &2H\frac{\partial^2H}{\partial\h x\partial\h y}-\frac{\partial H}{\partial\h x}\frac{\partial H}{\partial\h y}=4\h x\h y(c^2-ab)+O(|x|^3).
\end{align*}
It follows that $\mathrm{det}\left(b(x)\right)=O(|x|^5)$. Since $a(x)=\sigma(x)\sigma^*(x)$ is positive-definite and uniformly nondegenerate, there exists $m$ such that $a_{11}a_{22}-a_{12}^2>m$. Due to  the continuity of $a(x)$, it's bounded on any compact set, so there exists $\gamma\in(0,1)$ such that $|a_{12}(x)|<\sqrt{a_{11}(x)a_{22}(x)-m}<(1-\gamma)\sqrt{a_{11}(x)a_{22}(x)}$ for every $x\in B(\bm{\mathrm x}_0,1)$.
Let $r$ be smaller than $1$ and such that $b_{12}(x)^2<(1+\gamma)^2b_{11}(x)b_{22}(x)$ for every $x\in B(\bm{\mathrm x}_0,r)$. Therefore, 
$$|a_{12}(x)b_{12}(x)|<(1-\gamma^2)\sqrt{a_{11}(x)a_{22}(x)b_{11}(x)b_{22}(x)}.$$It follows that
$$\sum_{i,j=1}^2 a_{ij}(x)b_{ij}(x)>0,$$which completes the proof.
\end{proof}

\begin{lemma}
Suppose that ${\bm{\varphi}}_t=(i,\varphi_t)\in\textbf{C}\left([0,T], I_i\right)$ is non-constant with $S({\bm{\varphi}})$ finite, and ${\bm{\varphi}}_0$ is at an exterior vertex $O$. If $t_0=\inf\{t:{\bm{\varphi}}_t\not=O\}$, then $\dot\varphi_{t_0+}=0$.
\label{zeroderivative}
\end{lemma}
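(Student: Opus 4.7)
The approach is proof by contradiction: I will show that if $\dot\varphi_{t_0+}\neq 0$, then $S(\bm\varphi)=\infty$. The two essential ingredients are the linear vanishing of $B_i^2(H)$ at the extremum, and the substitution $u=\sqrt{\varphi}$, which turns the singular energy $\int|\dot\varphi|^2/\varphi\,dt$ into $4\int|\dot u|^2\,dt$; from there the claim reduces to a one-line Cauchy--Schwarz together with absolute continuity of the Lebesgue integral.

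Without loss of generality I assume that $O=\bm{\mathrm x}_0$ is a local minimum of $H$ with $H(\bm{\mathrm x}_0)=0$ (the maximum case is symmetric, replacing $\varphi$ by $-\varphi$), so that $\varphi_{t_0}=0$ and $\varphi_t>0$ on $(t_0,t_0+\eta]$ for some $\eta>0$, by continuity of $\bm\varphi$. The first step is to establish that $B_i^2(H)\le CH$ for all sufficiently small $H>0$. This follows from Taylor-expanding $H$ around the nondegenerate minimum $\bm{\mathrm x}_0$ (exactly in the spirit of \lemref{positivedrift}): positive-definiteness of the Hessian gives $H(x)\asymp|x-\bm{\mathrm x}_0|^2$, $|\nabla H(x)|\asymp\sqrt{H(x)}$, $|\nabla H^*(x)\sigma(x)|^2=O(H(x))$ on the level curves, while the length of $C_i(H)$ is $\Theta(\sqrt H)$ and $T_i(H)$ tends to a positive limit (the period of the linearized rotation). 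Substituting these estimates into the defining integral for $T_i(H)B_i^2(H)=\oint_{C_i(H)}|\nabla H^*\sigma|^2/|\nabla H|\,dl$ yields the claim.

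Next I set $u_t:=\sqrt{\varphi_t}$ on $[t_0,t_0+\eta]$, shrinking $\eta$ so that $B_i^2(\varphi_t)\le C\varphi_t$ holds there. On the open set $\{\varphi>0\}$, $u$ is absolutely continuous with $|\dot u_t|^2=|\dot\varphi_t|^2/(4\varphi_t)$ almost everywhere, so
\[
S(\bm\varphi)\ \ge\ \frac{1}{2C}\int_{t_0}^{t_0+\eta}\frac{|\dot\varphi_t|^2}{\varphi_t}\,dt\ =\ \frac{2}{C}\int_{t_0}^{t_0+\eta}|\dot u_t|^2\,dt.
\]
Since $S(\bm\varphi)<\infty$, this forces $\dot u\in L^2([t_0,t_0+\eta])\subset L^1$, so $u$ extends to an absolutely continuous function on the closed interval with $u_{t_0}=0$. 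Cauchy--Schwarz then yields
\[
\frac{\varphi_{t_0+h}}{h}\ =\ \frac{u_{t_0+h}^{\,2}}{h}\ =\ \frac{1}{h}\Bigl(\int_{t_0}^{t_0+h}\dot u_s\,ds\Bigr)^{2}\ \le\ \int_{t_0}^{t_0+h}|\dot u_s|^2\,ds\ \xrightarrow[h\downarrow 0]{}\ 0,
\]
the last limit being absolute continuity of the Lebesgue integral. This is precisely $\dot\varphi_{t_0+}=0$. The only genuinely nontrivial ingredient, and thus the main obstacle, is the pointwise bound $B_i^2(H)=O(H)$ near the vertex---the single place where the nondegeneracy of the Hessian at the extremum is actually used; after that the argument is clean real analysis.
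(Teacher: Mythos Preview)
Your approach is essentially the paper's: both rest on the single substantive estimate $B_i^2(H)\le CH$ near the extremum (which the paper gets more directly from $B_i^2(H)\le\max_{C_i(H)}|\nabla H^*\sigma|^2$), followed by Cauchy--Schwarz and absolute continuity of the integral. Your substitution $u=\sqrt\varphi$ is a tidy repackaging of the last step; the paper instead bounds $\int_0^t B_i^2(\varphi_s)\,ds\le C\int_0^t\varphi_s\,ds\le Ct\int_0^t|\dot\varphi_s|\,ds$ and cancels one factor of $\int_0^t|\dot\varphi_s|\,ds$ after Cauchy--Schwarz.

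There is one unjustified step: the assertion that $\varphi_t>0$ on some interval $(t_0,t_0+\eta]$ does \emph{not} follow from continuity. The definition of $t_0$ only guarantees that $\varphi$ leaves $0$ arbitrarily soon after $t_0$, not that it stays away; the trajectory may return to the vertex infinitely often on every right neighborhood. The fix is immediate: for each $h>0$ with $\varphi_{t_0+h}>0$, put $s_h=\sup\{s\in[t_0,t_0+h]:\varphi_s=0\}$ and run your Cauchy--Schwarz bound on $(s_h,t_0+h]$, where $\varphi>0$ does hold. Since $t_0+h-s_h\le h$ and $\int_{s_h}^{t_0+h}|\dot\varphi_r|^2/\varphi_r\,dr\le\int_{t_0}^{t_0+h}|\dot\varphi_r|^2/\varphi_r\,dr\to0$, you still get $\varphi_{t_0+h}/h\to0$. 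The paper's chain of inequalities sidesteps this entirely because it never divides by $\varphi$ pointwise---it only uses $B_i^2(\varphi_s)\le C\varphi_s$, which is trivially true at $\varphi_s=0$.
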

\begin{proof}
Without loss of generality, we assume that $t_0=0$ and $\varphi_{0}=H(\bm{\mathrm x}_0)=0$, where $\bm{\mathrm x}_0$ is the origin, and a local minimum is achieved there. Due to the positive-definiteness of the Hessian matrix at $\bm{\mathrm x}_0$, we have $r,\lambda,\Lambda>0$ such that, for every $x\in B(\bm{\mathrm x}_0,r)$, we have the estimates: $H(x)\geq\lambda |x|^2$ and $|\nabla H(x)|\leq\Lambda |x|$. It follows that there exists $H_0>0$ such that, for every $H<H_0$, $C_i(H)\subseteq B(\bm{\mathrm x}_0,r)$. Furthermore, let the eigenvalues of $\sigma\sigma^*$ be bounded from above by M.
Thus, for $H<H_0$, $$B_i^2(H)\leq\max_{x\in C_i(H)}|\nabla H(x)^*\sigma(x)|^2\leq\max_{x\in C_i(H)}M|\nabla H(x)|^2\leq\max_{x\in C_i(H)}M\Lambda^2|x|^2,$$$$ H\geq\max_{x\in C_i(H)}\lambda|x|^2$$
$$\Rightarrow \frac{B_i^2(H)}{H}\leq\frac{M\Lambda^2}{\lambda}.$$
A direct consequence of this estimate is that if $t$ is so small that for every $ s\leq t$, $\varphi_s<H_0$, then
$$
    \int_0^t B_i^2(\varphi_s)ds\leq\int_0^t\frac{B_i^2(\varphi_s)}{\varphi_s}\cdot{\varphi_s} ds
    \leq\frac{M\Lambda^2}{\lambda}\cdot\int_0^t\varphi_s ds.
$$
By the Cauchy-Schwarz Inequality, when $t$ is sufficiently small,
\begin{align*}
    \left(\int_0^t|\dot\varphi_s| ds\right)^2&\leq\int_0^t\frac{{\dot\varphi_s}^2}{B_i^2(\varphi_s)}ds\cdot\int_0^t B_i^2(\varphi_s)ds\\
    &\leq\int_0^t\frac{{\dot\varphi_s}^2}{B_i^2(\varphi_s)}ds\cdot\frac{M\Lambda^2}{\lambda}\cdot\int_0^t\varphi_sds\\
    &\leq\int_0^t\frac{{\dot\varphi_s}^2}{B_i^2(\varphi_s)}ds\cdot\frac{M\Lambda^2}{\lambda}\cdot\int_0^t\int_0^s|\dot\varphi_u|duds\\
    &\leq\int_0^t\frac{{\dot\varphi_s}^2}{B_i^2(\varphi_s)}ds\cdot\frac{M\Lambda^2}{\lambda}\cdot t\int_0^t|\dot\varphi_s| ds.
\end{align*}
Thus we can conclude that $$\frac{\varphi_t}{t}\leq\frac{\int_0^t|\dot\varphi_s| ds}{t}\leq\frac{M\Lambda^2}{\lambda}\int_0^t\frac{{\dot\varphi_s}^2}{B_i^2(\varphi_s)}ds\to0,$$
as $t\to0$, which completes the proof.
\end{proof}

\textbf{Part 4.} Now we turn to the technical preparation for Section 3.3.
\begin{proof}[Verification of \eqref{derv1} and \eqref{derv2}]
\begin{align*}
    \ \lvert\frac{\partial T}{\partial\mu}(\mu,\nu)\rvert
    =&\left\lvert\frac{1}{2}\int_{\nu}^l\frac{\partial}{\partial\mu}\left(\frac{\mathrm {det}(J_\psi(\sqrt{y^2+\mu^2-\nu^2},y))}{\sqrt{y^2+\mu^2-\nu^2}}\right)dy\right\rvert\\
    \leq&\ \frac{\overline M}{2}\int_\nu^l\left(\frac{\mu}{y^2+\mu^2-\nu^2}+\frac{\mu}{(y^2+\mu^2-\nu^2)^{3/2}}\right)dy\\
    =&\ \frac{\overline M}{2}\left(\left[\arctan{\frac{y}{\sqrt{\mu^2-\nu^2}}}\cdot\frac{\mu}{\sqrt{\mu^2-\nu^2}}\right]_\nu^l+\left[\frac{y}{\sqrt{y^2+\mu^2-\nu^2}}\cdot\frac{\mu}{\mu^2-\nu^2}\right]^l_\nu\right)\\
    \leq&\ \overline M\left(\frac{\pi\mu}{\sqrt{\mu^2-\nu^2}}+\frac{\mu}{(\mu^2-\nu^2)\sqrt{1+\mu^2-\nu^2}}+\frac{|\nu|}{\mu^2-\nu^2}\right)\\
    \leq&\ \frac{C}{H(\psi(\mu,\nu))}.
\end{align*}
\begin{align*}
    \ \lvert\frac{\partial T}{\partial\nu}(\mu,\nu)\rvert
    =&\left\lvert\frac{1}{2}\int_{\nu}^l\frac{\partial}{\partial\nu}\left(\frac{\mathrm {det}(J_\psi(\sqrt{y^2+\mu^2-\nu^2},y))}{\sqrt{y^2+\mu^2-\nu^2}}\right)dy-\frac{\mathrm {det}(J_\psi(\mu,\nu))}{2\lvert\mu\rvert}\right\rvert\\
    \leq&\ \frac{\overline M}{2}\int_\nu^l\left(\frac{\lvert\nu\rvert}{y^2+\mu^2-\nu^2}+\frac{\lvert\nu\rvert}{(y^2+\mu^2-\nu^2)^{3/2}}\right)dy+\frac{\overline{M}}{2\mu}\\
    =&\ \frac{\overline M}{2}\left(\left[\arctan{\frac{y}{\sqrt{\mu^2-\nu^2}}}\cdot\frac{\lvert\nu\rvert}{\sqrt{\mu^2-\nu^2}}\right]_\nu^l+\left[\frac{y}{\sqrt{y^2+\mu^2-\nu^2}}\cdot\frac{\lvert\nu\rvert}{\mu^2-\nu^2}\right]^l_\nu+\frac{1}{\mu}\right)\\
    \leq&\ \overline M\left(\frac{|\pi\nu|}{\sqrt{\mu^2-\nu^2}}+\frac{|\nu|}{(\mu^2-\nu^2)\sqrt{1+\mu^2-\nu^2}}+\frac{\nu^2}{\mu(\mu^2-\nu^2)}+\frac{1}{\mu}\right)\\
    =&\ \overline M\left(\frac{|\pi\nu|}{\sqrt{\mu^2-\nu^2}}+\frac{|\nu|}{(\mu^2-\nu^2)\sqrt{1+\mu^2-\nu^2}}+\frac{\mu}{\mu^2-\nu^2}\right)\\
    \leq&\ \frac{C}{H(\psi(\mu,\nu))}.
\end{align*}

\end{proof}

\begin{proof}[Proof of Lemma~\ref{brownianatsaddle}]
    We only give the proof of the first statement here, and the others can be proved similarly. Our approach here is based on the reflection principle. Let us start with the probability of a larger event,
    \begin{align*}
        &{\bm{\mathrm P}}\left(\e^{\beta/2}{W}_{T}<-4\e^{(a-d)\beta}(a-d)\beta|\log\e|\right)\\
        \geq\ &{\bm{\mathrm P}}\left(\e^{\beta/2}{W}_{T}\in(-\e^{a\beta}-4\e^{(a-d)\beta}(a-d)\beta|\log\e|,-4\e^{(a-d)\beta}(a-d)\beta|\log\e|)\right)\\ 
        \geq\ &\e^{a\beta}\frac{1}{\sqrt{2T\pi\e^\beta}}\exp\left(-\frac{(4\e^{(a-d)\beta}(a-d)\beta|\log\e|+\e^{a\beta})^2}{2T\e^\beta}\right),
    \end{align*}
    while the probability of the difference is estimated, with  $\Tilde\e:=4\e^{(a-d)\beta}(a-d)\beta|\log\e|+\frac{1}{2}a\beta|\log\e|\e^{a\beta}$, as follows:
    \begin{align*}
        &{\bm{\mathrm P}}\left(\e^{\beta/2}{W}_{T}<-4\e^{(a-d)\beta}(a-d)\beta|\log\e|,\ \e^{\beta/2} W_t\geq\frac{1}{4}a\beta|\log\e|\e^{a\beta},\ \text{for some }t\in(0,T)\right)\\
        =\ &{\bm{\mathrm P}}\left(\e^{\beta/2}{W}_{T}>4\e^{(a-d)\beta}(a-d)\beta|\log\e|+\frac{1}{2}a\beta|\log\e|\e^{a\beta}\right)\\
        =\ &\frac{1}{\sqrt{2\pi\cdot \e^\beta T}}\int_{\Tilde{\e}}^\infty \exp({-\frac{x^2}{2\e^\beta T}})dx\\
        \leq\ &\frac{1}{\sqrt{2\pi\cdot \e^\beta T}}\cdot\frac{\e^\beta T}{\Tilde{\e}}\cdot\exp\left(-\frac{\Tilde{\e}^2}{2T\e^\beta}\right),
    \end{align*}where the first equality follows from the reflection principle and the inequality follows from the relation:$\int_y^\infty\exp(-cx^2)dx\leq\frac{1}{2cy}\exp(-cy^2)$, for all $y,c>0$.
    Consider the ratio of these two probabilities:
    \begin{align*}
        &\frac{{\bm{\mathrm P}}\left(\e^{\beta/2}{W}_{T}<-4\e^{(a-d)\beta}(a-d)\beta|\log\e|,\ \e^{\beta/2} W_t\geq\frac{1}{4}a\beta|\log\e|\e^{a\beta},\ \text{for some }t\in(0,T)\right)}{{\bm{\mathrm P}}\left(\e^{\beta/2}{W}_{T}<-4\e^{(a-d)\beta}(a-d)\beta|\log\e|\right)}\\
        \leq\ &\frac{\e^\beta T}{\e^{a\beta}\Tilde{\e}}\exp\left(\frac{(4\e^{(a-d)\beta}(a-d)\beta|\log\e|+\e^{a\beta})^2}{2T\e^\beta}-\frac{\Tilde{\e}^2}{2T\e^\beta}\right)\to 0,\ \text{as }\e\to0.
    \end{align*}
    Combining the estimates above, we obtain
    \begin{align*}
        &{\bm{\mathrm P}}\left(\e^{\beta/2}{W}_{T}<-4\e^{(a-d)\beta}(a-d)\beta|\log\e|,\ \e^{\beta/2} W_t<\frac{1}{4}a\beta|\log\e|\e^{a\beta},\ \forall\ 0<t<T\right)\\
        \geq\ &\frac{1}{2}\e^{a\beta}\frac{1}{\sqrt{2T\pi\e^\beta}}\exp\left(-\frac{(4\e^{(a-d)\beta}(a-d)\beta|\log\e|+\e^{a\beta})^2}{2T\e^\beta}\right)\\
    \geq\ &2\exp\left(-\e^{-(1-2(a-d))\beta-\kappa}\right),
    \end{align*}for $\e$ small enough.
\end{proof}

\begin{proof}[Proof of Lemma~\ref{escapetime}]
Let us look at the slow motion for small $\e$:
\begin{align*}
    dx_t&=\nabla^\perp H(x_t)dt;\\
    d\Tilde X_t^\e&=\nabla^{\perp}H(\Tilde X_t^\e)dt+\sqrt\e\sigma(\Tilde X_t^\e)d\Tilde{W_t}.
\end{align*}
 Let $F$ be the region between the level curves $\{x:H(x)=\frac{1}{2}\e^{a\beta}\}$ and $\{x:H(x)=\e^{(a-d)\beta}\}$, and between $\gamma_{out}$ and $\gamma_{in}$, where $x_t$ moves from $\gamma_{in}$ to $\gamma_{out}$. 
 The integrand is at least $c|\log\e|^2$ large (for some constant $c$) if the process is away from the critical point, and we would like to show that, with high probability,  the process does not spend more than $C|\log\e|$ time (for some constant $C$) in $F$ during a typical rotation. By \eqref{eq:T}, the deterministic process starting at any point in $F$ spends no more than time $C|\log\e|$ in $F$ before exit. We'll bound the exit time for the random process by approximating it with piecewise deterministic motion and will see that the small diffusion added does not significantly delay the time for the process to exit from $F$.

Define $T_1(x)=\inf\{t:x_t\not\in F,\ x_0=x\}$ and $T_2(x)=\inf\{t:x_t\in\gamma_{in},\ x_0=x\}$. For the process starting at $x$, define stopping times $\eta^x=\inf\{t:\Tilde X_t^\e\in C_{ki}(\e^{a\beta}/2)\cup C_{ki}(\e^{(a-d)\beta})\}$, $\tau_0^x=\inf\{t:\Tilde X_t^\e\in\gamma_{out}\}$, $\tau_1^x=\inf\{t:\Tilde X_t^\e\in\gamma_{in}\}$, $\tau_2^x=\inf\{t>\tau_1^x:\Tilde X_t^\e\in\gamma_{out}\}$, and $\hat\tau_j^x=\tau_j^x\wedge\eta^x$, $j=0,1,2$.
Since $\sigma$ is bounded, it's not hard to see that, for each $x\in F$ and $\Tilde X^\e$ starting at $x$, $${\bm{\mathrm P}}\left(\sup_{\Delta\in[0,1]}|\sqrt{\e}\int_t^{t+\Delta}\sigma(\Tilde X_s^\e)dW_s|>\e^{\frac{1}{2}-\frac{1}{2}(1-2(a-d))\beta-2\kappa}\right)<\exp\left(-\e^{-(1-2(a-d))\beta-4\kappa}\right).$$
We claim that, with $\overline{M}$ defined at the beginning of the Section 3.3 and $x\in\gamma_{in}$,
$$E:=\bigcap_{k=0}^{\overline M|\log\e|-1}\left\{\sup_{\Delta\in[0,1]}|\sqrt{\e}\int_k^{k+\Delta}\sigma(\Tilde X_s^\e)dW_s|\leq\e^{\frac{1}{2}-\frac{1}{2}(1-2(a-d))\beta-2\kappa}\right\}\subseteq\left\{\hat\tau_0^x\leq\overline M|\log\e|\right\}.$$
To verify this, note that, by Gronwall's inequality, the event $E$ is contained in
$$\bigcap_{k=0}^{\overline M|\log\e|-1}\left\{\sup_{\Delta\in[0,1]}|\Tilde X^\e_{k+\Delta}-x^k_{k+\Delta}|\leq e^L\cdot\e^{\frac{1}{2}-\frac{1}{2}(1-2(a-d))\beta-2\kappa}\right\},$$where $L$ is the Lipschitz constant of $\nabla H$ and $x^k$ is the deterministic process with random starting point defined by 
$$dx_t^k=\nabla^\perp H(x_t^k)dt,\ k\leq t\leq k+1,\ x_k^k=\Tilde X^\e_k.$$ Then, by the recalling that $\|J_{\psi^{-1}}\|<\overline M$ and $|\nabla T(\psi^{-1}(x))|\leq\frac{C}{|H(x)|}$, it follows that \begin{align}
    E&\subseteq\bigcap_{k=0}^{\overline M|\log\e|-1}\left\{\sup_{\Delta\in[0,1]}|\Tilde X^\e_{k+\Delta}-x^k_{k+\Delta}|\leq e^L\cdot\e^{\frac{1}{2}-\frac{1}{2}(1-2(a-d))\beta-2\kappa}\right\}\nonumber\\
    &\subseteq\bigcap_{k=0}^{\overline M|\log\e|-1}\left\{T_1(\Tilde X_{k+1}^\e)\leq (T_1(\Tilde X_{k}^\e)-1/2)\vee0\right\}\bigcup\left\{\hat\tau_0^x\leq\overline M|\log\e|\right\}\label{eq:exittime1}\\
    &\subseteq \ \left\{\hat\tau_0^x\leq\overline M|\log\e|\right\}.\label{eq:exittime2}
\end{align}
Here we need to explain why the last two inclusions hold.
\begin{enumerate}
\item[(1)] Let us start with \eqref{eq:exittime1}.
For each $0\leq k\leq\overline M|\log\e|-1$, if $\sup_{\Delta\in[0,1]}|\Tilde X^\e_{k+\Delta}-x^k_{k+\Delta}|\leq e^L\cdot\e^{\frac{1}{2}-\frac{1}{2}(1-2(a-d))\beta-2\kappa}$, then one of the following must happen:
\begin{enumerate}
\item If $\Tilde X^\e_{k+1}\not\in F$, then $\hat\tau_0^x\leq\overline{M}|\log\e|$.
\item If $\Tilde X^\e_{k+1}\in F$ and $x_{k+1}^k\in F$, then $T_1(x^k_{k+1})=T_1(\Tilde X_k^\e)-1$ and $|T_1(\Tilde X^\e_{k+1})-T_1(x^k_{k+1})|=|T(\psi^{-1}(\Tilde X^\e_{k+1}))-T(\psi^{-1}(x^k_{k+1}))|=o(1)$, so $T_1(\Tilde X^\e_{k+1})\leq(T_1(\Tilde X_{k}^\e)-1/2)$.
\item If $\Tilde X^\e_{k+1}\in F$, $x_{k+1}^k\not\in F$, and $H(x_{k+1}^k)\not\in[\frac{1}{2}\e^{a\beta},\e^{(a-d)\beta}]$, then $\Tilde X_k^\e\not\in F$ and $\hat\tau_0^x\leq\overline{M}|\log\e|$.
\item If $\Tilde X^\e_{k+1}\in F$, $x_{k+1}^k\not\in F$, $H(x_{k+1}^k)\in[\frac{1}{2}\e^{a\beta},\e^{(a-d)\beta}]$, and $x_{k+1}^k\in V_0\setminus V$, then, as in (b), we still have that $T_1(\Tilde X^\e_{k+1})\leq(T_1(\Tilde X_{k}^\e)-1/2)$.
\item If $\Tilde X^\e_{k+1}\in F$, $x_{k+1}^k\not\in F$, $H(x_{k+1}^k)\in[\frac{1}{2}\e^{a\beta},\e^{(a-d)\beta}]$, and $x_{k+1}^k\not\in V_0$, then we have a contradiction here, since $\psi$ is a diffeomorphism, $|\Tilde X^\e_{k+1}-x_{k+1}^k|\leq e^L\cdot\e^{\frac{1}{2}-\frac{1}{2}(1-2(a-d))\beta-2\kappa}$, and there is a constant distance between $\partial U$ and $\partial U_0$.
\end{enumerate}
\item[(2)] The last inclusion \eqref{eq:exittime2} is justified by the fact that $T_1(x)<\overline{M}|\log H(x)|<\frac{1}{2}\overline{M}|\log\e|-1$ on $F$ for $\e$ small enough. 

\end{enumerate}
Thus, by the strong Markov property,
\begin{align*}
    {\bm{\mathrm P}}(\hat\tau_0^x\leq\overline M|\log\e|)\geq {\bm{\mathrm P}}(E)&\geq \left(1-\exp\left(-\e^{-(1-2(a-d))\beta-4\kappa}\right)\right)^{\overline M|\log\e|}\\
    &\geq 1-\overline M|\log\e|\exp\left(-\e^{-(1-2(a-d))\beta-4\kappa}\right)\\
    &\geq 1-\exp\left(-\e^{-(1-2(a-d))\beta-2\kappa}\right).
\end{align*}

Now we look at one full rotation. For each $x\in\gamma_{out}\cap D_i(\frac{1}{2}\e^{a\beta},\e^{(a-d)\beta})$ and deterministic and random processes starting at $x$, consider the events 
\begin{align*}
    E_1&=\{\eta^x\leq\tau_1^x\}\cap\{\sup_{t\in[0,2T_2(x)]}|\Tilde X_t^\e-x_t|<\e^{\frac{1}{2}-\frac{1}{2}(1-2(a-d))\beta-2\kappa}\},\\
    E_2&=\{\eta^x>\tau_1^x\}\cap\{\sup_{t\in[0,2T_2(x)]}|\Tilde X_t^\e-x_t|<\e^{\frac{1}{2}-\frac{1}{2}(1-2(a-d))\beta-2\kappa}\}\cap\{\hat\tau_2^x-\hat\tau_1^x\leq\overline{M}|\log\e|\},\\
    E_3&=\{\eta^x>\tau_1^x\}\cap\{\sup_{t\in[0,2T_2(x)]}|\Tilde X_t^\e-x_t|<\e^{\frac{1}{2}-\frac{1}{2}(1-2(a-d))\beta-2\kappa}\}\cap\{\hat\tau_2^x-\hat\tau_1^x>\overline{M}|\log\e|\}.
\end{align*}
By the strong Markov property of $\Tilde X_t^\e$, ${\bm{\mathrm P}}(E_3)\leq\exp\left(-\e^{-(1-2(a-d))\beta-2\kappa}\right)$. Furthermore, note that 
\begin{align*}
    {\bm{\mathrm P}}(E_1\sqcup E_2\sqcup E_3)&={\bm{\mathrm P}}\left(\sup_{t\in[0,2T_2(x)]}|\Tilde X_t^\e-x_t|<\e^{\frac{1}{2}-\frac{1}{2}(1-2(a-d))\beta-2\kappa}\right)\\
    &>1-\exp\left(-\e^{-(1-2(a-d))\beta-4\kappa}\right),
\end{align*}
where last inequality follows from the fact that $T_2(x)$ is bounded by a constant on $\gamma_{out}$. We deduce that 
\begin{equation}
\label{eq:onerotation}
    {\bm{\mathrm P}}(E_1\cup E_2)\geq1-2\exp\left(-\e^{-(1-2(a-d))\beta-2\kappa}\right).
\end{equation}Observe that, on the event $E_1\cup E_2$, $\int_0^t|\log H(\Tilde X_s^\e)\nabla H(\Tilde X_s^\e)^*\sigma(\Tilde X_s^\e)|^2ds>t$ holds for all $t\leq\hat\tau_2^x$. Using the strong Markov property of $X_t^\e=\Tilde X_{t\e^{\beta-1}}^\e$ and \eqref{eq:onerotation} for every rotation prior to $\eta\wedge T$ (here the term ``rotation'' means that the process travels from $\gamma_{out}$ to $\gamma_{in}$ and then back to $\gamma_{out}$), we obtain the desired result by noting that the number of rotations $N$ is then bounded by $C\e^{\beta-1}$ provided that the analogue of $E_1\cup E_2$ holds for every rotation,
\begin{align*}
    &{\bm{\mathrm P}}\left(\int_0^t|\log H(X_s^\e)\rcoe|^2ds>t,\ \forall\ t\leq\eta\wedge T\right)\\
    \geq&\left(1-2\exp\left(-\e^{-(1-2(a-d))\beta-2\kappa}\right)\right)^N\\
    \geq&\ 1-2N\exp\left(-\e^{-(1-2(a-d))\beta-2\kappa}\right)\\
    \geq&\ 1-\exp\left(-\e^{-(1-2(a-d))\beta-\kappa}\right). 
\end{align*}
\end{proof}

\textbf{Part 5.} The last lemma is needed for the proof in Section 4.1.
\begin{lemma}
With the same assumptions as in Lemma \ref{condition3}, we have, 
for every $0\leq k<n$,
$${\bm{\mathrm E}}\left(\chi_{B_k\cap A_k}\cdot\exp\left(\frac{1-\mu}{2\e^\beta}\cdot\frac{n}{B^2\left(H(X_\kk^\e)\right)-\frac{L\delta'}{2}}\left|H(X_\kl^\e)-H(X_\kk^\e)\right|^2\right)\given\right)\leq C_\mu<\infty,$$
where $C_\mu$ is a constant that depends solely on $\mu$.
\label{finiteexpectation}
\end{lemma}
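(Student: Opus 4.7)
The plan is to write $H(X_\kl^\e) - H(X_\kk^\e) = D_k + \e^{\beta/2} N_k$, where $D_k := \e^\beta\int_\kk^\kl AH(X_s^\e)\,ds$ is a negligible drift and $N_k := \int_\kk^\kl \rcoe\,dW_s$ is a continuous martingale, and to control the exponential moment of the square via a Gaussianization trick combined with the standard exponential supermartingale inequality for $N_k$. Since $|AH| \leq M$, we have $|D_k| \leq M\e^\beta/n$, so Young's inequality gives
\begin{equation*}
|H(X_\kl^\e) - H(X_\kk^\e)|^2 \leq (1+\mu)\e^\beta N_k^2 + (1+\mu^{-1}) D_k^2,
\end{equation*}
and the $D_k^2$ piece contributes at most $O(\e^\beta/n)$ to the exponent, hence at most a bounded multiplicative factor (say $2$) for $\e$ small.

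The first substantive step is to show that on the event $A_k \cap B_k$, for $\e$ small,
\begin{equation*}
\langle N_k\rangle_{1/n} = \int_\kk^\kl |\rcoe|^2\,ds \;\leq\; \bar V_k := \frac{1}{n}\Bigl(B^2(H(X_\kk^\e)) + \tfrac{L\delta'}{2}\Bigr).
\end{equation*}
This is done by repeating verbatim the rotation-by-rotation averaging of Step II in the proof of \lemref{loclem}, with the reference level $\varphi_\kk$ replaced by $H(X_\kk^\e)$; the Lipschitz errors in $T$, $TB^2$, and $|\nabla H^*\sigma|^2$ together with the fractional-rotation remainder fit comfortably inside the $L\delta'/2$ cushion by the choice of $\delta'$ in \lemref{condition3}.

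For the main estimate, introduce $Z \sim N(0,1)$ independent of the Wiener process and use the identity $\exp(\alpha y^2) = \bm{\mathrm E}_Z\exp(\sqrt{2\alpha}\,yZ)$. For each fixed $z$, the process $\exp\bigl(\sqrt{2\alpha(1+\mu)}\,z\,\e^{\beta/2}N_k(t) - \alpha(1+\mu)z^2\e^\beta\langle N_k\rangle_t\bigr)$ is a nonnegative supermartingale of initial value $1$, so combining the resulting bound with $\langle N_k\rangle_{1/n} \leq \bar V_k$ on $A_k\cap B_k$ and integrating in $Z$ by Fubini gives
\begin{equation*}
\bm{\mathrm E}\bigl[\exp\bigl((1+\mu)\alpha\e^\beta N_k^2\bigr)\chi_{A_k\cap B_k}\given\bigr] \leq \bigl(1 - 2(1+\mu)\alpha\e^\beta \bar V_k\bigr)^{-1/2},
\end{equation*}
where $\alpha := \frac{(1-\mu)n}{2\e^\beta(B^2(H(X_\kk^\e)) - L\delta'/2)}$ is the coefficient appearing in the lemma.

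The main obstacle is verifying strict positivity of $1 - 2(1+\mu)\alpha\e^\beta \bar V_k$. A direct substitution yields
\begin{equation*}
2(1+\mu)\alpha\e^\beta \bar V_k = (1-\mu^2)\,\frac{B^2(H(X_\kk^\e)) + L\delta'/2}{B^2(H(X_\kk^\e))-L\delta'/2},
\end{equation*}
and since the ratio on the right is decreasing in $B^2$ while $B^2(H(X_\kk^\e)) \geq m$ on $C$, it is bounded above by $(1-\mu^2)\cdot\frac{2m+L\delta'}{2m-L\delta'}$, which is at most $1 - \mu^2/2$ by the very inequality arranged in the choice of $\delta'$ in \lemref{condition3}. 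Combined with the drift-correction factor this yields $C_\mu = 2(1-\mu^2/2)^{-1/2}$, independent of $k$ and $\e$, completing the proof.
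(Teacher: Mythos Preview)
Your argument is correct in structure and takes a genuinely different route from the paper in the second half. Both proofs begin with the same rotation-by-rotation averaging to bound the quadratic variation of the martingale part on $A_k\cap B_k$; the paper then invokes the Dambis--Dubins--Schwarz representation, passes to $\sup_{[0,Q]}|\tilde W|$, and uses the reflection principle to reduce the exponential moment of the running maximum to twice that of a single Gaussian. Your Hubbard--Stratonovich linearization $\exp(\alpha y^2)=\bm{\mathrm E}_Z\exp(\sqrt{2\alpha}\,yZ)$ combined with the exponential-supermartingale bound for each frozen $z$ bypasses the time change and the reflection principle entirely and is arguably cleaner; both methods land on the same Gaussian integral and the same constraint $\tfrac{1-\mu^2}{2}\cdot\tfrac{2m+L\delta'}{2m-L\delta'}<\tfrac{1-\mu^2/2}{2}$ built into the choice of $\delta'$.

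Two quantitative points need tightening. First, the averaging in Step~II of \lemref{loclem} (and the paper's own proof of this lemma) actually yields $\langle N_k\rangle_{1/n}\le \frac{1+\mu}{n}\bigl(B^2(H(X_\kk^\e))+\tfrac{L\delta'}{2}\bigr)$, not $\frac{1}{n}(\cdot)$: the accumulated rotation errors $\frac{L\delta'(2M+1)}{nT(H(X_\kk^\e))}$ and the Lipschitz oscillation of $B^2$ are of the same order as the cushion $\frac{L\delta'}{2n}$ and do not fit inside it. With this extra $(1+\mu)$ and your Young parameter also equal to $\mu$, the product $2(1+\mu)\alpha\e^\beta\bar V_k$ becomes $(1-\mu^2)(1+\mu)\cdot\tfrac{B^2+L\delta'/2}{B^2-L\delta'/2}$, which can exceed $1$. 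The fix is painless: apply Young with a smaller parameter, say $\eta=\mu^2/4$, so that $(1-\mu)(1+\eta)(1+\mu)\cdot\text{ratio}\le(1+\eta)(1-\mu^2/2)<1$; the drift contribution $\alpha(1+\eta^{-1})D_k^2=O(\e^\beta/\mu^2)$ is still negligible. Second, your final constant should read $2(\mu^2/2)^{-1/2}$ rather than $2(1-\mu^2/2)^{-1/2}$, since $1-2(1+\mu)\alpha\e^\beta\bar V_k\ge\mu^2/2$; this is cosmetic.
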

\begin{proof}
Without loss of generality, we can assume that $k=0$, the initial point is $x\in C$, and drop the conditioning on $\mathcal F_\kk$.
Note that
\begin{equation}
\label{eq:A.9}
    \lvert H(X^\e_{1/n})-H(x)\rvert\leq\e^\beta\left|\int_0^{1/n}AH(X_s^\e)ds\right|+\e^{\beta/2}\left|\int_0^{{1/n}}\rcoe dW_s\right|.
\end{equation}The first term on the right hand side is negligible compared to the second term, which needs to be studied further. Note that, on the event $B_0$, $H(X_t^\e)\in H(C)$ for $0\leq t\leq1/n$. For each $0\leq p\leq\lfloor\frac{1}{nT^\e_0}\rfloor$ and $t\geq pT^\e_0$, define the deterministic process with the random starting point:
$$d\xi_t^\e=\e^{\beta-1}\nabla^\perp H(\xi_t^\e)dt,\ \ \ \ \xi_{pT^\e_0}^\e=X^\e_{pT^\e_0}.$$
Then, on the event $A_0\cap B_0$,
\begin{align*}
    \lvert X_t^\e-\xi_t^\e\rvert\leq\ &\e^{\beta-1}\int_{pT^\e_0}^t\left|\nabla H^\perp(X_s^\e)-\nabla H^\perp(\xi_s^\e)\right|ds+\e^{\beta/2}\left|\int_{pT^\e_0}^t\sigma(X_s^\e)dW_s\right|\\
    \leq\ &\e^{\beta-1}\int_{pT^\e_0}^tL\left|X_s^\e-\xi_s^\e\right|ds+M\cdot\e^{\frac{1-\beta}{4}}.
\end{align*}
So, for $\e$ small enough, by Gronwall's inequality,
\begin{equation}
\label{deviation}
    \lvert X_t^\e-\xi_t^\e\rvert\leq M\exp(L\cdot T(H(x)))\cdot\e^{\frac{1-\beta}{4}}\leq M\exp(LM)\cdot\e^{\frac{1-\beta}{4}}\leq\frac{\delta'}{M}.
\end{equation}
Using \eqref{deviation} and Lipschitz continuity and boundedness of $\lvert\nabla H(x)^*\sigma(x)\rvert^2$, $B^2(H)$, and $T(H)$, we estimate $|\rcoe|^2$ in terms of $B^2(X_s^\e)$ on the interval $[pT^\e_0,(p+1)T^\e_0]$, on the event $A_0\cap B_0$:
\begin{align*}
    &\int_{pT^\e_0}^{(p+1)T^\e_0}\lvert\rcoe\rvert^2ds\\ 
    \leq &\int_{pT^\e_0}^{(p+1)T^\e_0}\lvert\nabla H(\xi_s^\e)^*\sigma(\xi_s^\e)\rvert^2ds+\e^{1-\beta}T(H(x))L\cdot\frac{\delta'}{M}\\
    \leq&\int_{pT^\e_0}^{pT^\e_0+\e^{1-\beta}T(H(X_{pT^\e_0}^\e))}\lvert\nabla H(\xi_s^\e)^*\sigma(\xi_s^\e)\rvert^2ds+\e^{1-\beta}\cdot L\delta'M+\e^{1-\beta}T(H(x))L\cdot\frac{\delta'}{M}\\
    \leq&\ \e^{1-\beta}T(H(X_{pT^\e_0}^\e))B^2(H(X_{pT^\e_0}^\e))+\e^{1-\beta}\cdot L\delta'(M+1)\\
    \leq&\ T^\e_0B^2(H(X_{pT^\e_0}^\e))+\e^{1-\beta}\cdot L\delta'(2M+1).
\end{align*}This is valid for every $0\leq p\leq\lfloor\frac{1}{nT^\e_0}\rfloor$. Combining the contributions from all the time intervals, since $B^2(H)$ is Lipschitz and $\delta'$ is chosen to be small enough, it follows that on the event $A_0\cap B_0$:
\begin{align*}
    &\int_0^{1/n}\lvert\rcoe\rvert^2ds\\
    \leq\ &\sum_{p=0}^{\lfloor\frac{1}{nT^\e_0}\rfloor-1}\int_{pT^\e_0}^{(p+1)T^\e_0}\lvert\rcoe\rvert^2ds+T^\e_0 M\\
    \leq&\ \sum_{p=0}^{\lfloor\frac{1}{nT^\e_0}\rfloor-1}\left(T^\e_0B^2(H(X_{pT^\e_0}^\e))+\e^{1-\beta}\cdot L\delta'(2M+1)\right)+T^\e_0 M\\
    \leq&\ \frac{1+\mu}{n}\left[B^2(H(x))+\frac{L\delta'}{2}\right],
\end{align*}
for $\e$ sufficiently small.
Let $Q=\frac{1+\mu}{n}\left[B^2(H(x))+\frac{L\delta'}{2}\right]$. Since $|AH(x)|\leq M$ in $C$ and \eqref{eq:A.9} holds,
\begin{align*}
    &{\bm{\mathrm E}}\left(\chi_{B_0\cap A_0}\cdot\exp\left(\frac{1-\mu}{2\e^\beta}\cdot\frac{n}{B^2\left(H(x)\right)-\frac{L\delta'}{2}}\left|H(X_{1/n}^\e)-H(x)\right|^2\right)\right)\\
    \leq\ &{\bm{\mathrm E}}\left(\chi_{B_0\cap A_0}\cdot\exp\left(\frac{n(1-\mu)}{2B^2\left(H(x)\right)-L\delta'}\left(\left|\int_0^{\frac{1}{n}}\rcoe dW_s\right|+\frac{M}{n}\e^{\beta/2}\right)^2\right)\right)\\
    =\ &{\bm{\mathrm E}}\left(\chi_{B_0\cap A_0}\cdot\exp\left(\frac{n(1-\mu)}{2B^2\left(H(x)\right)-L\delta'}\left(\left|\Tilde W\left(\int_0^{\frac{1}{n}}|\rcoe|^2ds\right)\right|+\frac{M}{n}\e^{\beta/2}\right)^2\right)\right)\\
    \leq\ &{\bm{\mathrm E}}\left(\chi_{B_0\cap A_0}\cdot\exp\left(\frac{1-\mu}{2}\cdot\frac{n}{B^2\left(H(x)\right)-\frac{L\delta'}{2}}\cdot\left(\sup_{0\leq t\leq Q} |\Tilde W(t)|+\frac{M}{n}\e^{\beta/2}\right)^2\right)\right)\\
    \leq\ &{\bm{\mathrm E}}\left(\exp\left(\frac{1-\mu^2}{2}\cdot\frac{B^2\left(H(x)\right)+\frac{L\delta'}{2}}{B^2\left(H(x)\right)-\frac{L\delta'}{2}}\cdot\frac{1}{Q}\left(\sup_{0\leq t\leq Q} |\Tilde W(t)|+\frac{M}{n}\e^{\beta/2}\right)^2\right)\right)\\
    \leq\ &{\bm{\mathrm E}}\left(\exp\left(\frac{1-\mu^2/2}{2}\cdot\frac{1}{Q}\left(\sup_{0\leq t\leq Q} |\Tilde W(t)|+\frac{M}{n}\e^{\beta/2}\right)^2\right)\right)\\
    \leq\ &{\bm{\mathrm E}}\left(\exp\left(\frac{1-\mu^2/3}{2}\cdot\frac{1}{Q}\sup_{0\leq t\leq Q} |\Tilde W(t)|^2\right)\right)+\exp\left(\frac{1-\mu^2/2}{2}\right),
\end{align*}
where the last inequality follows, for $\e$ small, by integrating over the sets $\{\sup_{0\leq t\leq  Q} |\Tilde W(t)|>\sqrt Q/2\}$ and $\{\sup_{0\leq t\leq Q} |\Tilde W(t)|\leq \sqrt Q/2\}$ separately.
For every $a>0$, by the reflection principle,
\begin{align*}
    {\bm{\mathrm P}}\left(\sup_{0\leq t\leq Q} |\Tilde W(t)|^2>a^2\right)&\leq4{\bm{\mathrm P}}\left(\Tilde W_{Q}>a\right)=2{\bm{\mathrm P}}\left(|\Tilde W_{Q}|^2>a^2\right).
\end{align*}
Therefore, for every $a>0$, ${\bm{\mathrm P}}(\xi>a)\leq 2{\bm{\mathrm P}}(\eta>a)$, where
$$\xi=\exp\left(\frac{1-\mu^2/3}{2}\cdot\frac{1}{Q}\sup_{0\leq t\leq Q} |\Tilde W(t)|^2\right)>0,$$
$$\eta=\exp\left(\frac{1-\mu^2/3}{2}\cdot\frac{1}{Q}|\Tilde W(Q)|^2\right)>0.$$
Since $${\bm{\mathrm E}}\left(\eta\right)={\bm{\mathrm E}}\left(\exp\left(\frac{1-\mu^2/3}{2}\cdot\frac{1}{Q}|\Tilde W(Q)|^2\right)\right):=\Tilde c_\mu<\infty,$$we have
$${\bm{\mathrm E}}\left(\xi\right)=\int_0^\infty {\bm{\mathrm P}}\left(\xi>x\right)dx\leq2\int_0^\infty {\bm{\mathrm P}}\left(\eta>x\right)dx=2\Tilde c_\mu<\infty,$$
which completes the proof of the Lemma.
\end{proof}

\section*{Acknowledgements}
I am grateful to Prof. M. Freidlin and Prof. L. Koralov for helpful discussions and useful comments.

\end{document}